\documentclass{article}
\usepackage[utf8]{inputenc}
\usepackage[margin=1in]{geometry}
\usepackage{graphicx, amssymb,amsmath,amsthm,xcolor,enumitem,mathtools}
\numberwithin{equation}{section}
\newtheorem{theorem}{Theorem}[section]
\newtheorem{corollary}[theorem]{Corollary}
\newtheorem{lemma}[theorem]{Lemma}

\theoremstyle{definition}
\newtheorem{example}[theorem]{Example}
\theoremstyle{remark}
\newtheorem{remark}{Remark}[section]
\usepackage[mathlines]{lineno}
\usepackage[
    colorlinks=true,   
    linkcolor=blue,    
    urlcolor=blue,     
    citecolor=blue     
]{hyperref}

\newcommand{\E}{\mathbf{E}}

\title{Spectral Properties of Elementwise-\\Transformed Spiked Matrices}
\author{Michael J. Feldman \\ Department of Statistics, Stanford University}

\begin{document}
\date{} 

\maketitle


\begin{abstract}
This work concerns elementwise transformations of spiked matrices: $Y_n = n^{-1/2} f( \sqrt{n} X_n + Z_n)$. Here, $f$  is a function applied elementwise, $X_n$ is a low-rank signal matrix,  and $Z_n$ is white noise. 
We find that principal component analysis (PCA)  is capable of recovering 
signal under highly nonlinear or discontinuous transformations. Specifically, in the high-dimensional setting where  $Y_n$ is of size $n \times p$ with $n,p \rightarrow \infty$ and $p/n \rightarrow \gamma > 0$, we uncover a phase transition: 
 for signal-to-noise ratios above a precise threshold---depending on $f$, the distribution of elements of $Z_n$, and the limiting aspect ratio $\gamma$---the principal components of $Y_n$  (partially) recover  those of $X_n$. Below this threshold, the principal components of $Y_n$ are asymptotically orthogonal to the signal. 
In contrast, in the standard setting where PCA is applied to  
$X_n + n^{-1/2}Z_n$ directly, the analogous phase transition  depends only on $\gamma$. Similar phenomena occur with $X_n$ square and symmetric and $Z_n$ a generalized Wigner matrix. 

This model accommodates diverse data types not covered by prior spiked-matrix theory,  including forms of discrete data, preprocessed data, and data with missing values. Our results provide theoretical justification for applying PCA to such data, helping to elucidate PCA's empirical success.


\end{abstract}

\section{Introduction} \label{sec1}

From principal component analysis  to covariance estimation to factor analysis, spiked matrices are widely used to model high-dimensional data with latent low-rank structure: 
\begin{align}
    Y_n \coloneqq X_n + \frac{1}{\sqrt{n}} Z_n \label{-1},
\end{align}
where $Y_n$ is the observed data, $X_n$ is a low-rank signal matrix, and $Z_n$ is a noise matrix, each of size $n \times p$. 

The spectral properties of this model in high dimensions are well understood; we point to a few key papers \cite{BBP, BGN11, BGN12, P07, Peche},  the surveys \cite{JP18, Paul}, and the references therein. In settings where $n$ and $p$ are comparable, the singular values and vectors (or principal components) of $Y_n$ are inconsistent estimators of those of $X_n$.
Assume the elements of $Z_n$ are independent and identically distributed (i.i.d.) with mean zero and finite fourth moment. As $n, p \rightarrow \infty$ with $p/n \rightarrow \gamma \in (0, \infty)$, a phase transition occurs: the principal components of $Y_n$ contain information about the signal $X_n$ if and only if the signal-to-noise ratio exceeds $\gamma^{1/4}$.  Below this threshold, the principal components are asymptotically orthogonal to the signal. Analogous results hold 
with $X_n$ square and symmetric and $Z_n$ a generalized Wigner matrix. This phase transition, in the context of  Johnstone's closely related spiked covariance model, is known as the Baik--Ben Arous--P\'ech\'e threshold. 


This work concerns the following  generalization of model (\ref{-1}):
\begin{align} \label{0} Y_n \coloneqq \frac{1}{\sqrt{n}} f( \sqrt{n} X_n + Z_n) , \end{align}
where $f: \mathbb{R} \rightarrow \mathbb{R}$ is applied elementwise. 

We find that PCA is capable of recovering low-rank signal under highly nonlinear or discontinuous transformations. More specifically, phenomena observed under model (\ref{-1}), described above, extend to the elementwise-transformed setting (\ref{0}). As $n, p \rightarrow \infty$ with $p/n \rightarrow \gamma \in (0, \infty)$, a phase transition occurs: 
 above a signal-to-noise ratio threshold---depending on $f$, the law $\mu$ of elements of $Z_n$, and the limiting aspect ratio $\gamma$---the principal components of  $Y_n = n^{-1/2} f(\sqrt{n} X_n + Z_n)$ (partially) recover  those of $X_n$. Below this threshold, the principal components are asymptotically orthogonal to the signal. This is in contrast to the standard setting (\ref{-1}), where the analogous phase transition depends only on $\gamma$ and the variance of noise. Similar phenomenona occur with $X_n$ square and symmetric and $Z_n$ a generalized Wigner matrix. 

Informally stated, our main result is the following: there exists a 
constant $\tau(f,\mu)$ 
such that certain spectral properties of $Y_n$ are asymptotically equivalent to those of 
\begin{align}\label{0.1}
\tau(f,\mu) X_n 
+  \frac{1}{\sqrt{n}}Z_n .
\end{align}
That is, in high dimensions, the principal components of $Y_n$ behave as those of a standard spiked matrix with signal term rescaled by $\tau(f,\mu)$. For $f$ and $\mu$ such that $\tau(f,\mu) = 0$, PCA fails for signal-to-noise ratios of order one. In this case, there exists an $f, \mu$-dependent integer $\ell_* \geq 2$ (assuming $f \neq 0$ $\mu$-almost everywhere) such that PCA is powerful provided  the signal-to-noise ratio of $Y_n$ scales as $n^{1-1/(2\ell_*)}$. 

As $f$ may be discontinuous and $\mu$ may not have a density function, analysis is delicate.
Our approach is based on expanding $f$ in a basis of orthogonal polynomials with respect to $\mu$---such tools are used in the study of  kernel matrices in high dimensions \cite{Cheng,  ElK, liao2020sparse, LY23, theo}. Of these prior works, ours is most similar to \cite{liao2020sparse}, which studies elementwise transformations of the Gram matrix under a Gaussian mixture model. Recent independent work \cite{Guionnet} considers model (\ref{0}) under related assumptions with quite similar results and proof techniques. An advantage of our work is that \cite{Guionnet} assumes $f$ is locally Lipschitz---discontinuous transformations are of both theoretical and practical interest (see (\ref{plmokn}) below). 

\subsection{Motivation and Applications} \label{sec:moti}
This work confronts a limitation of the current theory of PCA: while high-dimensional studies of PCA standardly assume the spiked model, PCA is often applied to diverse forms of data that (\ref{-1}) cannot accommodate, including discrete data, preprocessed data, and data with missing values. 
Model (\ref{0}) addresses these data types, and our results provide theoretical justification for applying PCA and help to elucidate its empirical success.
To precisely describe high-dimensional phenomena, we require certain technical conditions that readers might question the generality and verifiability of. While we argue (Remarks \ref{rem1}--\ref{rem210}) that these assumptions are reasonable, we believe 
the message of this work---that $X_n$ ``reappears" in the spectrum of $Y_n$, and that PCA can therefore recover signal---is much more general than the specific conditions we impose, though exact asymptotics may not be achievable.

To highlight the gap between the theory and practice of PCA, we reference in particular \cite{N08}, which examines a genetic dataset with several characteristics of spiked matrices: the sample covariance matrix of the data has two outlier eigenvalues that ``carry signal," and the eigenvalue histogram is well approximated by the Marchenko--Pastur law (the limiting spectral distribution (LSD) of $Y_n^\top Y_n$ under model (\ref{-1}) with white noise)---see Figure \ref{fig0}. Yet, the data in \cite{N08} takes values in $\{0,1,2\}$, which (\ref{-1}) cannot generate. We propose to model such data as
\begin{align}\label{-2}
y_{ij} \sim \text{Bin}(2, \text{logistic}(x_{ij})) ,
\end{align}
where $\text{logistic}(x) \coloneqq (1+e^{-x})^{-1}$ and $x_{ij}$ and $y_{ij}$ denote the elements of $X_n$ and $Y_n$, respectively.
A complete analysis of this model is given in Section \ref{sec:ex};  $Y_n$ is (approximately)  a transformation of a spiked matrix, and the spectrum of $n^{-1} Y_n^\top Y_n$ (1) converges to the Marchenko--Pastur law, and (2) contains outlier eigenvalues corresponding to $X_n$. These findings result from reformulating (\ref{-2}):  if $z_{ij}$ is logistically distributed, 
\begin{align} \label{plmokn}
{\bf 1}(-x_{ij} + z_{ij} \leq 0) \sim \text{Ber}(\text{logistic}(x_{ij})) 
\end{align}
(summing two i.i.d.\ copies of this model yields (\ref{-2})). 

Maximum likelihood estimation (subject to a low-rank constraint) is studied under (\ref{plmokn}) in \cite{lpca2, lpca1}; however, in genetic applications, practitioners seem to favor PCA over alternatives. To our knowledge, this is the first work to provide theoretical justification for applying PCA to data of this form.

Additional applications in Section \ref{sec:ex} include the following: 

\begin{enumerate}
    \item The ReLU activation, $f(z) = \max(z,0)$. This is a particular form of missing data: negative values are unobserved. Under Gaussian noise, the effect of $f(z)$ is to raise the recovery threshold of PCA by a factor of $\sqrt{2(\pi-1)/\pi}$.
    \item Truncated data, $f_c(z) = z {\bf 1}(|z| \leq c)$. Observed data may be inherently truncated, or truncation may be intentionally applied to the data as a preprocessing step.   The effect of truncation on PCA depends heavily on the distribution of noise---under Gaussian noise, truncation raises the recovery threshold, while under heavy-tailed noise, truncation may dramatically lower the recovery threshold. Our results enable calculation (for a given noise distribution) of the optimal truncation level. 
    
    For example, with Cauchy-distributed noise, this level is $c^*\approx 2.028$. Without truncation, the singular vectors of $X_n + n^{-1/2} Z_n$ are asymptotically orthogonal to those of $X_n$.
    \item Optimal elementwise preprocessing. For certain classes of noise distributions, we identify an optimal elementwise preprocessor that strictly lowers the recovery threshold of PCA, extending \cite{Perry, Mont}.   
    \item Under the spiked model, there is an optimal eigenvalue shrinkage function (under operator norm loss) for estimation of $X_n$ given $X_n + n^{-1/2} Z_n$ (see \cite{GD14, GD17, leeb, DF22}). We prove that this shrinkage function is optimal under model (\ref{0}) as well. 
    
\end{enumerate}

\begin{figure}[]
\centering
\includegraphics[height=2.2in]{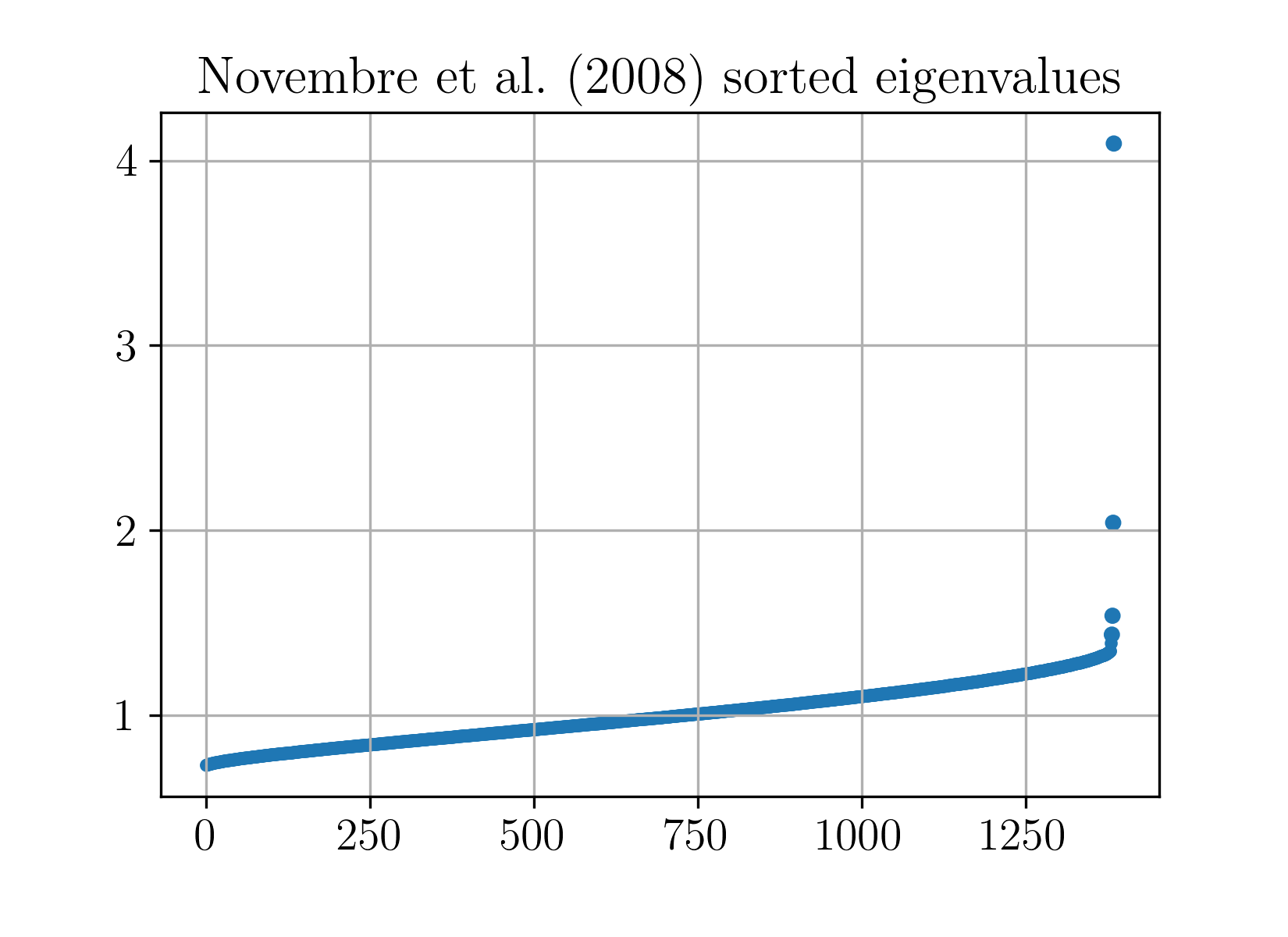}
\includegraphics[height=2.2in]{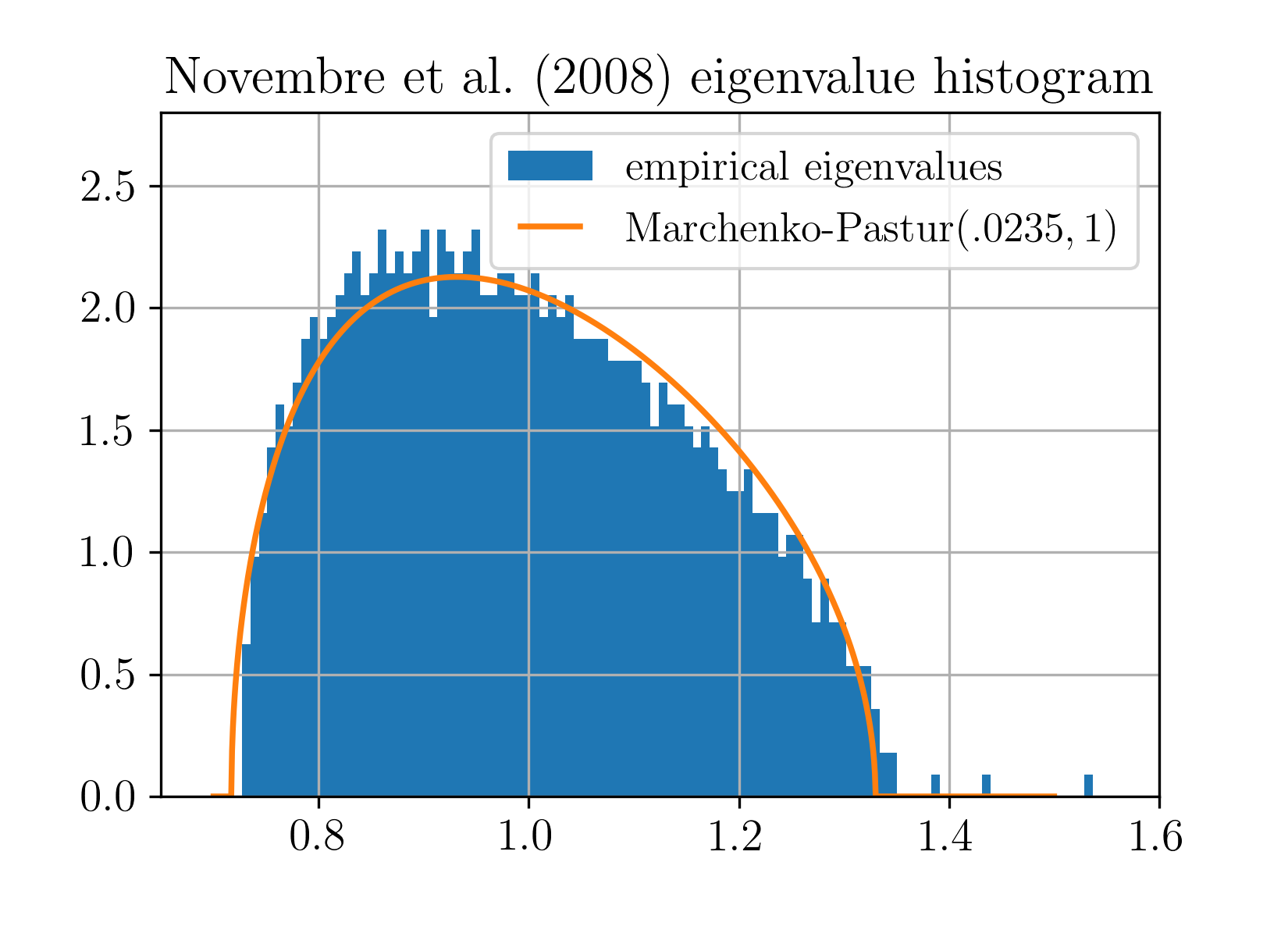}
\caption{The eigenvalues of the sample covariance of data in \cite{N08} plotted in increase order (left) and the histogram of eigenvalues compared to the Marchenko--Pastur distribution  (right).}
\label{fig0}
\end{figure}


\subsection{Notation and Setting}

Given a real matrix $X$ of size $n \times p$, let $(x_{ij}: 1 \leq i \leq n, 1 \leq j \leq p)$ denote the matrix elements, $\sigma_i(X)$ the $i$-th largest singular value, and $u_i(X)$ and $v_i(X)$ the corresponding left and right (unit norm) singular vectors, respectively. If $X$ is symmetric, we write $\lambda_i(X)$ for the $i$-th largest eigenvalue. For a function $f \colon \mathbb{R} \to \mathbb{R} $, let $f(X)$ denote  elementwise application of $f$ to $X$. Let $\odot$ denote the Hadamard product. We use the following norms: (1) the standard  $L^p$ norm on $\mathbb{R}^n$, $\|\cdot\|_p$, (2) the operator and Frobenius matrix norms, $\|\cdot\|_2$ and $\| \cdot \|_F$, and (3) the norm $\|\cdot\|_\mu$ on the Hilbert space $L^2(\mathbb{R},\mu)$ (see Section \ref{sec:ortho}).

We consider two high-dimensional frameworks:

\begin{enumerate}
    \item The {\it asymmetric setting}, 
where $X_n$ and $Z_n$ are real matrices of size $n \times p$ and $p/n \rightarrow \gamma \in (0, \infty)$ as $n \rightarrow \infty$. The signal $X_n$ is deterministic, rank $r$, and has fixed singular values $\sigma_1(X_n) >\cdots >  \sigma_r(X_n)$.  The elements of $Z_n$ are i.i.d. 

\item The {\it symmetric setting}, where  $X_n$ and $Z_n$ are real symmetric matrices of size $n \times n$. The signal $X_n$ is deterministic, rank $r$, and has fixed eigenvalues $\lambda_1(X_n) >\cdots >  \lambda_r(X_n)$. The upper triangular elements of $Z_n$ are i.i.d. We call $Z_n$ a generalized Wigner matrix and $X_n + n^{-1/2}Z_n$ a spiked Wigner matrix. Here, the data has aspect ratio $\gamma = 1$. 
\end{enumerate}

\subsection{Spiked Matrices}

The standard spiked matrix model $Y_n = X_n + n^{-1/2} Z_n$ 
exhibits singular value bias and singular vector inconsistency. The corresponding phase transition depends only the variance of noise: 
\begin{lemma}(Theorem 3.6 of \cite{BS_SpAn}, Theorems 2.8--2.10 of \cite{BGN12}) \label{lem:bgn12} In the asymmetric setting, let $Y_n \coloneqq X_n + n^{-1/2} Z_n$, where the elements of $Z_n$ have mean zero, variance one, and finite moments. The empirical spectral distribution (ESD) of $Y_n^\top Y_n$ converges almost surely weakly to the Marchenko--Pastur law with parameter $\gamma$. Furthermore, 
\begin{align} && \label{qsd1} \hspace{1.2cm}
   \sigma_i^2(Y_n) \xrightarrow[]{a.s.} \lambda(\sigma_i(X_n),\gamma),  && \hspace{.15cm} 1 \leq i \leq r, 
\end{align}
where the biasing function $\lambda(\sigma, \gamma)$ is given by
\[
\lambda(\sigma, \gamma) \coloneqq \begin{dcases}
    \frac{(1+\sigma^2)(\gamma + \sigma^2)}{\sigma^2} & \sigma > \gamma^{1/4} \\
    (1 + \sqrt{\gamma})^2 & \sigma \leq \gamma^{1/4}
\end{dcases} . 
\]
The limiting angles between the singular vectors of $X_n$ and $Y_n$ are 
\begin{equation} \hspace{2.4cm} \label{bgn12_1} 
\begin{aligned}  
    \langle u_i(X_n), u_j(Y_n) \rangle^2 & \xrightarrow{a.s.} \delta_{ij} \cdot c_1^2 (\sigma_i(X_n), \gamma) , \\  \langle v_i(X_n), v_j(Y_n) \rangle^2 & \xrightarrow{a.s.} \delta_{ij} \cdot c_2^2 (\sigma_i(X_n), \gamma) ,  
\end{aligned}   \hspace{1.83cm} 1\leq i,j \leq r , 
\end{equation} 
where the left and right ``cosine" functions $c_1(\sigma,\gamma)$ and $c_2(\sigma,\gamma)$ are given by 
\begin{equation*} 
\begin{aligned} &&
     c_1^2(\sigma,\gamma)  \coloneqq   \begin{dcases}    
     1 - \frac{\gamma+\sigma^2}{\sigma^2(1 + \sigma^2)} & \sigma > \gamma^{1/4} \\ 0 & \sigma \leq \gamma^{1/4} \end{dcases} ,  && \quad\quad
     c_2^2(\sigma,\gamma)  \coloneqq   \begin{dcases}    
     1 - \frac{\gamma(1+\sigma^2)}{\sigma^2(\gamma + \sigma^2)} & \sigma > \gamma^{1/4} \\ 0 & \sigma \leq \gamma^{1/4} \end{dcases} .
\end{aligned} 
\end{equation*}
\end{lemma}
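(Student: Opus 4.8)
This lemma is classical; the plan is to recall the Benaych-Georges--Nadakuditi strategy, reducing everything to the resolvent of the pure-noise matrix $\tilde Y_n := n^{-1/2} Z_n$ and treating $X_n$ as a finite-rank perturbation. Since $X_n$ has fixed rank $r$, the matrices $Y_n^\top Y_n$ and $\tilde Y_n^\top \tilde Y_n$ differ by a matrix of rank at most $2r$; their empirical spectral distributions therefore differ by at most $2r/p \to 0$ in the L\'evy metric, so the Marchenko--Pastur theorem for $\tilde Y_n^\top \tilde Y_n$ (valid under mean zero, unit variance, and finite second moment) transfers to give a.s.\ weak convergence of the ESD of $Y_n^\top Y_n$ to the MP law with parameter $\gamma$.

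For the outliers, write the compact SVD $X_n = A \Sigma B^\top$ with $A \in \mathbb{R}^{n \times r}$ and $B \in \mathbb{R}^{p \times r}$ having orthonormal columns and $\Sigma = \mathrm{diag}(\sigma_1(X_n), \dots, \sigma_r(X_n))$. Expanding $Y_n Y_n^\top = (\tilde Y_n + A\Sigma B^\top)(\tilde Y_n + A\Sigma B^\top)^\top$ exhibits $Y_n Y_n^\top$ as a perturbation of $\tilde Y_n \tilde Y_n^\top$ of rank at most $2r$, with range contained in the span of the columns of $A$ and of $\tilde Y_n B$. By the matrix determinant lemma, a real number $z$ with $z^2$ outside $[(1-\sqrt\gamma)^2,(1+\sqrt\gamma)^2]$ is a singular value of $Y_n$ if and only if an explicit $2r \times 2r$ determinant $h_n(z)$ vanishes; $h_n$ is assembled from $\Sigma^{-1}$ and the three bilinear forms $A^\top R_n A$, $B^\top S_n B$, and $A^\top R_n \tilde Y_n B$, where $R_n = (z^2 I_n - \tilde Y_n \tilde Y_n^\top)^{-1}$ and $S_n = (z^2 I_p - \tilde Y_n^\top \tilde Y_n)^{-1}$.

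The next step is concentration. Because $A$ and $B$ are deterministic and independent of $Z_n$, the standard a.s.\ convergence of quadratic forms of sample-covariance resolvents against deterministic vectors (the estimates underlying Bai--Silverstein, or an isotropic local MP law) gives, for each fixed $z$ with $z^2$ outside the support, $A^\top R_n A \to m(z^2) I_r$, $B^\top S_n B \to \underline m(z^2) I_r$, and $A^\top R_n \tilde Y_n B \to 0$, where $m$ and $\underline m$ are the Stieltjes transforms of the MP law and its companion. Since the eigenvalues of $\tilde Y_n \tilde Y_n^\top$ stay near the support (Bai--Yin), $h_n$ is a.s.\ eventually uniformly bounded on compacts away from the support, so analyticity plus a Vitali/Montel argument upgrades pointwise convergence to locally uniform a.s.\ convergence $h_n \to h_\infty$; here $h_\infty$ factorizes into $r$ copies of a scalar equation relating $z^2$ to $\sigma_i(X_n)^2$ through $m$ and $\underline m$, which has a unique, simple root with $z^2 > (1+\sqrt\gamma)^2$ precisely when $\sigma_i(X_n) > \gamma^{1/4}$, and that root equals $\lambda(\sigma_i(X_n),\gamma)$. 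Since the outliers of $Y_n Y_n^\top$ are exactly the zeros of $h_n$, Hurwitz's theorem yields $\sigma_i^2(Y_n) \to \lambda(\sigma_i(X_n),\gamma)$ in the supercritical case; in the subcritical case $\sigma_i(X_n) \le \gamma^{1/4}$ there is no such zero, and $\sigma_i^2(Y_n) \to (1+\sqrt\gamma)^2$ follows by sandwiching $\sigma_{i+r}(\tilde Y_n) \le \sigma_i(Y_n)$ (Weyl) between the MP edge and the absence of limiting outliers. Finally, differentiating $h_\infty$ at the limiting outlier---equivalently, extracting the residue of the resolvent bilinear form, which brings in $m'(z^2)$---produces the cosine formulas, the $\delta_{ij}$ pattern coming from the block-diagonal form of all the limiting matrices; subcritically, delocalization of the edge singular vectors of $\tilde Y_n$ forces the overlaps to $0$.

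The main obstacle is the concentration input: establishing a.s.\ convergence of the resolvent bilinear forms $A^\top R_n A$ and the mixed block to their deterministic limits with enough uniformity in $z$ near the outlier to pass the determinant equation to the limit and to locate its zeros. Everything else is linear algebra plus elementary properties of the Marchenko--Pastur law and its Stieltjes transform.
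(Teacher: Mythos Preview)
Your sketch is correct and follows the Benaych-Georges--Nadakuditi determinant-equation approach, but note that the paper does not give its own proof of this lemma: it is stated as a known result, citing Theorem 3.6 of \cite{BS_SpAn} and Theorems 2.8--2.10 of \cite{BGN12}. The paper's only additions are two remarks: first, that the Haar-distributed singular vector assumption in \cite{BGN12} is unnecessary here because the isotropic local Marchenko--Pastur law (or \cite{BMP07}) supplies the needed a.s.\ convergence of resolvent bilinear forms against \emph{deterministic} vectors---precisely the ``concentration input'' you flagged as the main obstacle; second, that the subcritical case of (\ref{bgn12_1}) for $r>1$ (stated in \cite{BGN12} only for $r=1$) follows by adapting the proof of (\ref{g28v}) for Corollary~\ref{cor1}. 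So your outline matches the cited literature, and the one gap you identified is exactly what the paper closes via Remark~\ref{rem_bgn1}.
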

\noindent A phase transition occurs precisely at $\gamma^{1/4}$: above $\gamma^{1/4}$, the ``supercritical" case, singular vectors of $Y_n$ contain information about the corresponding singular vectors of $X_n$.  
Below $\gamma^{1/4}$,  the ``subcritical" case, singular vectors of $Y_n$ are asymptotically orthogonal to the signal. 

\begin{remark} \label{rem_bgn1}
    We note that \cite{BGN12} assumes the singular vectors of $X_n$ are Haar-distributed to ensure convergence to zero of expressions such as $v_i(X_n)^\top(n^{-1} Z_n^\top Z_n - z)^{-1}v_i(X_n) - m_\gamma(z)$, where $m_\gamma(z)$ is the Stieltjes transform of the LSD of $n^{-1} Z_n^\top Z_n$. In our setting, as $Z_n$ has i.i.d.\ elements, this assumption is unnecessary: convergence to zero follows from Theorem 1 of \cite{BMP07} or the isotropic local Marchenko--Pastur law, Theorem 2.4 of \cite{Bloe}.
\end{remark}

\begin{remark} 
    The assumption that non-zero singular values are distinct is for convenience. More generally,
    \[ \sum_{j: \sigma_j(X_n) = \sigma_i(X_n)} \langle u_i(X_n),  u_j(Y_n) \rangle^2 \xrightarrow{a.s.} c_1^2(\sigma_i(X_n),\gamma) , \]
with an analogous statement holding for right singular vectors (see Theorem 2.9 of \cite{BGN12}). 
\end{remark}

\begin{remark} \label{rem_bgn3}
    Theorem 2.10 of \cite{BGN12}, corresponding to the subcritical case of (\ref{bgn12_1}), assumes $r=1$. It is well known, though, that this holds for fixed $r > 1$. Our results formally establish this claim since model (\ref{0}) subsumes (\ref{-1}).  
\end{remark}

The following lemma is the symmetric analog of Lemma \ref{lem:bgn12}. Let $r_+$ denote the number of positive eigenvalues of $X_n$ and $r_- \coloneqq r - r_+$:
\[ \lambda_1(X_n) \geq \cdots \geq \lambda_{r_+}(X_n) > 0  >  \lambda_{n- r_- + 1}(X_n) \geq \cdots \geq \lambda_n(X_n) . \]
\begin{lemma}(Theorem 2.5 of \cite{BS_SpAn}, Theorems 2.1 and 2.2 of \cite{BGN11})
In the symmetric setting,  let $Y_n \coloneqq X_n + n^{-1/2} Z_n$, where the elements of $Z_n$ have mean zero, variance one, and finite moments. The ESD of $Y_n^\top Y_n$ converges almost surely weakly to the semicircle law. Furthermore, 
    \begin{equation}
\begin{aligned}
     \hspace{3.6cm} \lambda_i(Y_n) & \xrightarrow{a.s.} \bar \lambda(\lambda_i(X_n)) , && \hspace{1.9cm} 1 \leq i \leq r_+ , \\
      \hspace{3.6cm} \lambda_{n-i+1}(Y_n) &\xrightarrow{a.s.} \bar \lambda(\lambda_{n-i+1}(X_n)) , &&  \hspace{1.9cm} 1 \leq i \leq r_- ,
\end{aligned}
\end{equation}
where the biasing function $\bar \lambda( \lambda)$ is given by
\begin{align*}
    \bar \lambda(\lambda) \coloneqq \begin{dcases}
        \lambda + \frac{1}{\lambda} & |\lambda| > 1 \\
        2 \textnormal{sign}(\lambda) & |\lambda| \leq 1
    \end{dcases} . 
\end{align*}
The limiting angles between the eigenvectors of $X_n$ and $Y_n$ are
\begin{align}
&&
    \langle v_i(X_n), v_j(Y_n) \rangle^2 \xrightarrow{a.s.} \delta_{ij} \cdot \bar c^2(\lambda_i(X_n)), && i,j \in \{1, \ldots, r_+\} \cup \{n-r_-+1, \ldots, n\},
\end{align}
where the cosine function $\bar c(\lambda)$ is  
\[
\bar c^2 (\lambda) \coloneqq  \begin{dcases}
    1 - \frac{1}{\lambda^2} & |\lambda| > 1 \\ 0 & |\lambda| \leq 1
\end{dcases} . 
\]
\end{lemma}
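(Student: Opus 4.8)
The plan is to establish the statement as the spiked-Wigner counterpart of Lemma~\ref{lem:bgn12}, running the resolvent argument of \cite{BGN11} but replacing the Haar hypothesis on the eigenvectors of $X_n$ with the isotropic local semicircle law exactly as described in Remark~\ref{rem_bgn1}. Write $W_n = n^{-1/2} Z_n$, so $Y_n = W_n + X_n$ with $X_n$ symmetric of fixed rank $r$. First I would invoke Wigner's theorem (valid under the stated moment conditions) for a.s.\ weak convergence of the ESD of $W_n$ to the semicircle law, together with $\|W_n\| \to 2$; since $X_n$ is a finite-rank perturbation, the rank inequality for Stieltjes transforms (equivalently, Cauchy interlacing) transfers the weak limit to the ESD of $Y_n$. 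It then remains to locate the at most $r$ outliers and to analyse their eigenvectors.

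For the outliers, diagonalise $X_n = U D U^\top$ with $U \in \mathbb{R}^{n\times r}$ having orthonormal columns $v_1(X_n),\dots,v_r(X_n)$ and $D = \mathrm{diag}(\lambda_1(X_n),\dots)$ (the negative spikes included), and set $G_n(z) = (W_n - z)^{-1}$. For $z \notin \mathrm{spec}(W_n)$, the factorisation $\det(W_n + X_n - z) = \det(W_n - z)\det(I_n + G_n(z) X_n)$ together with Sylvester's identity shows $z \in \mathrm{spec}(Y_n)$ iff $\det(I_r + D\, U^\top G_n(z)\, U) = 0$. By the isotropic local semicircle law (the Wigner analogue of \cite{Bloe}; see also \cite{BMP07}), $U^\top G_n(z)\, U \to m(z)\, I_r$ a.s.\ and uniformly on compact subsets of $\mathbb{C}\setminus[-2,2]$, where $m(z) = \tfrac12\bigl(-z + \sqrt{z^2-4}\bigr)$ is the semicircle Stieltjes transform, which satisfies $z = -1/m(z) - m(z)$ and maps $(2,\infty)$ onto $(-1,0)$. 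The limiting secular equation factorises as $\prod_{k=1}^r\bigl(1 + \lambda_k(X_n)\, m(z)\bigr) = 0$, whose root with $|z| > 2$ is $z = \lambda_k(X_n) + 1/\lambda_k(X_n)$ and exists precisely when $|\lambda_k(X_n)| > 1$. A continuity-and-counting argument — each supercritical spike contributes exactly one outlier of $Y_n$ converging to this root, and $\|W_n\| \to 2$ traps every other extreme eigenvalue within $\pm(2+o(1))$ — yields $\lambda_i(Y_n) \to \bar\lambda(\lambda_i(X_n))$ for $1 \le i \le r_+$ and the mirror statement for the $r_-$ negative spikes.

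For eigenvectors, fix a supercritical index $k$ and let $w$ be the unit eigenvector of $Y_n$ with eigenvalue $\theta_n \to \theta_k := \bar\lambda(\lambda_k(X_n))$. From $(W_n + X_n - \theta_n)w = 0$ one gets $w = -G_n(\theta_n)\, U D \beta$ with $\beta := U^\top w$ solving $\beta = -U^\top G_n(\theta_n)\,U D\beta$; by the local law, $\beta$ converges in direction to the kernel vector of $I_r + m(\theta_k) D$, i.e.\ to $\pm e_k$ (using distinctness of the $\lambda_i(X_n)$). Since $\langle v_i(X_n), w\rangle = e_i^\top\beta$, the scale-invariant identity $\langle v_k(X_n), w\rangle^2 = |e_k^\top\beta|^2 / \bigl(\beta^\top D\, U^\top G_n(\theta_n)^2\, U D \beta\bigr)$, combined with $G_n(z)^2 = \partial_z G_n(z)$ and the differentiated local law $U^\top G_n(z)^2 U \to m'(z) I_r$, gives $\langle v_k(X_n), w\rangle^2 \to 1/\bigl(\lambda_k(X_n)^2 m'(\theta_k)\bigr)$; evaluating $m'(\theta) = m^2/(1-m^2)$ at $m = -1/\lambda_k(X_n)$ reduces this to $1 - 1/\lambda_k(X_n)^2 = \bar c^2(\lambda_k(X_n))$, while $\langle v_i(X_n), w\rangle \to 0$ for $i \ne k$ because $\beta \propto e_k$ in the limit; orthonormality of the eigenvectors of $Y_n$ handles the remaining cross terms, producing the $\delta_{ij}$ structure.

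The hard part will be the subcritical regime $|\lambda_k(X_n)| \le 1$. That no eigenvalue of $Y_n$ escapes the bulk on account of such a spike is immediate from $\|W_n\| \to 2$ and the absence of a secular root with $|z| > 2$; what requires care is asymptotic orthogonality of the top eigenvectors of $Y_n$ to $v_k(X_n)$, because $G_n(z)$ diverges as $z$ approaches the edge $\pm 2$ and one cannot substitute $z = \pm 2$ into the local law. The resolution, following \cite{BGN11, BGN12} (cf.\ Remark~\ref{rem_bgn3}), is to write $\langle v_k(X_n), w\rangle^2$ in terms of $v_k(X_n)^\top G_n(\theta_n)^2 v_k(X_n)$ with $\theta_n \to \pm 2$ and to show this diverges — forcing the overlap to $0$ — via a second-moment estimate that stays valid up to the spectral edge. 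Adapting that edge analysis to the generalized-Wigner (i.i.d.-entry) setting, in place of the Haar computation of \cite{BGN11}, is the only step needing genuine work; the remainder parallels the asymmetric treatment underlying Lemma~\ref{lem:bgn12}.
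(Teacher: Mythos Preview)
The paper does not supply its own proof of this lemma: it is stated as a citation of Theorem~2.5 of \cite{BS_SpAn} and Theorems~2.1--2.2 of \cite{BGN11}, and the only commentary is the remark following it, which notes (parallel to Remarks~\ref{rem_bgn1}--\ref{rem_bgn3}) that the Haar hypothesis of \cite{BGN11} can be replaced by the isotropic local semicircle law of \cite{Bloe}. Your proposal is exactly that adaptation---the resolvent/secular-equation argument of \cite{BGN11} with quadratic forms $U^\top G_n(z)U$ controlled by the isotropic local law rather than by Haar calculations---so it is fully consistent with what the paper indicates, and the supercritical computations (secular equation, differentiation to obtain $m'$, and the evaluation $1/(\lambda^2 m'(\bar\lambda(\lambda))) = 1-1/\lambda^2$) are correct.

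One small note: the paper's own handling of the subcritical case (for the asymmetric model, in the proof of \eqref{g28v}) does not run an edge second-moment estimate directly on $G_n(\pm 2)^2$ as you sketch. Instead it works with the matrix $K_n(z)$ built from projections onto and off the signal subspace, establishes uniform convergence of $K_n$ and hence of $\partial_z K_n$ on compacta in $\mathbb{C}^+$, and then lets $z = \sigma_i^2(Y_n)+i\eta$ approach the real axis, using the divergence of $\partial_z m_\gamma$ at the bulk edge to force $\sigma_r(\partial_z K_n)\to\infty$ and thus $\|V_n^\top v_i(Y_n)\|_2\to 0$ via the normalization identity \eqref{b6s}. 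This is the route the remarks point to as adapting ``immediately'' to the Wigner setting, and it sidesteps the delicate on-edge resolvent bound you flag as the hard step; you may find it cleaner than the direct edge analysis you propose.
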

\begin{remark}
    Analogous statements to Remarks \ref{rem_bgn1}--\ref{rem_bgn3} hold for spiked Wigner matrices.  The relevant result of \cite{Bloe} is Theorem 2.12, the isotropic local semicircle law. 
\end{remark}

\subsection{Orthogonal Polynomials}\label{sec:ortho} In this section, we recall standard properties of orthogonal polynomials.

\begin{lemma} \label{lem:ortho1} ((3.1) of \cite{Geza}) Let $\mu$ be a Borel measure with infinite support and finite moments. There exists a sequence of polynomials $\{q_k\}_{k \in \mathbb{N}_0}$ in the Hilbert space $L^2(\mathbb{R},\mu)$ such that the degree of $q_k$ is $k$ and
\begin{align} 
 \int q_k(z) q_\ell(z) d \mu(z) = \delta_{k \ell} .
\end{align}
This sequence may be constructed by applying the Gram--Schmidt procedure to the monomials $\{z^k\}_{k \in \mathbb{N}_0}$. Requiring that the leading coefficient of each polynomial is positive, $\{q_k\}_{k \in \mathbb{N}_0}$ is unique.   
\end{lemma}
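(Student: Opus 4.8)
The plan is to prove the statement in three stages: reduce it to the linear independence of the monomials $\{x^k\}_{k\in\mathbb{N}_0}$ in $L^2(\mathbb{R},\mu)$, run the Gram--Schmidt procedure, and then settle uniqueness by a one-line dimension count. Finiteness of all moments of $\mu$ is exactly the statement that every monomial $x^k$ (and hence every polynomial) lies in $L^2(\mathbb{R},\mu)$, so the polynomials form a genuine subspace of this Hilbert space and the constructions below are well posed.

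The substantive step is the linear independence. Suppose a real polynomial $p(x)=\sum_{k=0}^N c_k x^k$ satisfies $\int p^2\,d\mu = 0$, equivalently $p = 0$ $\mu$-almost everywhere. If $p\not\equiv 0$ as a polynomial, then its zero set is finite with at most $N$ elements, so $\mu$ would be supported on that finite set---contradicting the hypothesis that $\mathrm{supp}(\mu)$ is infinite. Hence $p\equiv 0$ and all $c_k=0$, so the monomials are linearly independent in $L^2(\mathbb{R},\mu)$. This is the only place the infinite-support hypothesis is used, and I expect it to be essentially the only obstacle; the rest is bookkeeping.

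Next I would apply Gram--Schmidt to the ordered sequence $1, x, x^2, \ldots$: put $p_k = x^k - \sum_{j<k}\langle x^k, q_j\rangle_{L^2(\mu)}\, q_j$ and $q_k = p_k/\|p_k\|_{L^2(\mu)}$. Linear independence guarantees $p_k\ne 0$, so each $q_k$ is well defined; since every $q_j$ with $j<k$ has degree at most $j$, the $x^k$ term of $p_k$ is unchanged, so $\deg q_k = k$ and its leading coefficient equals $1/\|p_k\|_{L^2(\mu)} > 0$. By construction $\int q_k q_\ell\, d\mu = \delta_{k\ell}$, and $\mathrm{span}\{q_0,\dots,q_k\} = \mathcal{P}_k$, the space of polynomials of degree at most $k$.

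Finally, for uniqueness I would argue by induction that any orthonormal polynomial sequence $\{\tilde q_k\}$ with $\deg \tilde q_k = k$ and positive leading coefficients coincides with $\{q_k\}$. Indeed $\tilde q_0,\dots,\tilde q_{k-1}$ span $\mathcal{P}_{k-1}$, so $\tilde q_k$---like $q_k$---lies in the orthogonal complement of $\mathcal{P}_{k-1}$ within $\mathcal{P}_k$, which is one-dimensional because $\dim\mathcal{P}_k - \dim\mathcal{P}_{k-1} = 1$. Thus $\tilde q_k = c\,q_k$; then $\|\tilde q_k\|_{L^2(\mu)} = 1$ forces $|c| = 1$, and agreement of the signs of the leading coefficients forces $c = 1$.
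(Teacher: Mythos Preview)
Your proof is correct and complete: the linear-independence argument via the finite zero set of a nonzero polynomial is exactly the right use of the infinite-support hypothesis, the Gram--Schmidt step is carried out cleanly with the degree and leading-coefficient bookkeeping in order, and the uniqueness argument via the one-dimensional orthogonal complement $\mathcal{P}_k \ominus \mathcal{P}_{k-1}$ is the standard and correct route.

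As for comparison with the paper: the paper does not supply its own proof of this lemma at all---it simply cites the result as (3.1) of \cite{Geza} and moves on. So your write-up is not an alternative to the paper's argument but rather fills in a proof the paper omits by reference.
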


The Hilbert space $L^2(\mathbb{R},\mu)$ is equipped with the following inner product and norm:
\begin{align*} 
&    \langle f, g \rangle_\mu \coloneqq \int f(z) g(z) d\mu(z) , & \|f\|_\mu^2 \coloneqq \langle f, f \rangle_\mu .
\end{align*}

\begin{lemma} \label{lem:ortho2} (Theorem 3.4 and Remark 3.5 of \cite{Ernst}) Let $\mu$ be a Borel measure with infinite support and finite moments. If $\mu$ has finite moment generating function in a neighborhood of zero, the polynomials $\{q_k\}_{k \in \mathbb{N}_0}$ are dense in $L^2(\mathbb{R}, \mu)$: for $f \in L^2(\mathbb{R},\mu)$, define
    \begin{align*} &
    a_k \coloneqq \langle f, q_k \rangle_\mu =  \int f(z) q_k(z)  d\mu(z) ,  && f_K(z) \coloneqq \sum_{k=1}^K a_k q_k(z) .
    \end{align*} Then, $\|f\|_\mu^2 = \sum_{k=0}^\infty a_k^2$ and 
  \begin{align}
       \lim_{K \rightarrow \infty} \int (f(z) - f_K(z))^2   d\mu(z) = 0 . \label{fK} 
       \end{align}
\end{lemma}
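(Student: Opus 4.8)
The plan is to reduce the whole statement to the \emph{completeness} of $\{q_k\}_{k \in \mathbb{N}_0}$ in $L^2(\mathbb{R},\mu)$ and then to establish completeness by the classical analytic-continuation argument for the Fourier transform of $g\,d\mu$. Indeed, once the polynomials are known to be dense, (\ref{fK}) is immediate: $f_K$ is, up to the fixed constant term $a_0 q_0$, the orthogonal projection of $f$ onto $\mathrm{span}\{q_0,\dots,q_K\}$, and the orthogonal projections onto an increasing sequence of subspaces whose union is dense converge in norm to the identity. (Strictly, the sum in (\ref{fK}) should be read as running from $k=0$, or one should assume $\int f\,d\mu = 0$; this does not affect the argument.) So I would put all the effort into the statement: if $g \in L^2(\mathbb{R},\mu)$ satisfies $\int g(z)\,z^k\,d\mu(z) = 0$ for every $k \in \mathbb{N}_0$, then $g = 0$ $\mu$-almost everywhere.

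To prove this, I would first fix $\delta > 0$ with $\int e^{2\delta|z|}\,d\mu(z) < \infty$ --- possible because the moment generating function of $\mu$ is finite in a neighborhood of $0$ and $e^{2\delta|z|} \le e^{2\delta z} + e^{-2\delta z}$ --- and then consider, for $\zeta$ in the open strip $S \coloneqq \{\zeta \in \mathbb{C} : |\mathrm{Im}\,\zeta| < \delta\}$, the function
\[
  G(\zeta) \coloneqq \int g(z)\, e^{i\zeta z}\, d\mu(z) .
\]
The Cauchy--Schwarz inequality together with $|e^{i\zeta z}| \le e^{|\mathrm{Im}\,\zeta|\,|z|} \le e^{\delta|z|}$ shows that $\int |g(z)\,e^{i\zeta z}|\,d\mu(z) \le \|g\|_{L^2(\mu)}\,\big(\int e^{2\delta|z|}\,d\mu(z)\big)^{1/2} < \infty$ uniformly on $S$, so $G$ is well defined there. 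The same uniform domination (invoking Morera's theorem and Fubini, or by dominating the difference quotients) will show that $G$ is holomorphic on $S$ and that differentiation may be performed under the integral sign, giving $G^{(k)}(0) = \int g(z)\,(iz)^k\,d\mu(z) = 0$ for all $k$. Since $S$ is connected and contains $0$, the identity theorem then forces $G \equiv 0$ on $S$, and in particular $\int g(z)\,e^{itz}\,d\mu(z) = 0$ for every real $t$; that is, the Fourier transform of the finite complex measure $g\,d\mu$ vanishes identically, so $g\,d\mu = 0$ and hence $g = 0$ $\mu$-a.e.

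The hard part --- essentially the only point requiring care --- will be justifying that $G$ is \emph{holomorphic} (and not merely smooth) on the strip and that one may differentiate under the integral; this is exactly where the finite-moment-generating-function hypothesis enters, supplying the $\mu$-integrable majorant $|g(z)|\,e^{\delta|z|}$ on a genuine complex neighborhood of the real axis. Finiteness of all moments alone would not suffice, since the Hamburger moment problem may then be indeterminate and the polynomials need not be dense. Everything downstream of the vanishing of $G$ --- the identity theorem and the uniqueness theorem for Fourier transforms of finite measures --- is entirely routine. As a variant I might instead argue directly that each function $z \mapsto e^{itz}$, $t$ real, lies in the $L^2(\mu)$-closure of the polynomials (Taylor-expand the exponential and control the tail using $\int e^{s|z|}\,d\mu(z) < \infty$), which again reduces completeness to the vanishing of all Fourier coefficients of $g$; I would present whichever of the two versions reads more cleanly.
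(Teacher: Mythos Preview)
The paper does not give its own proof of this lemma; it simply cites Theorem~3.4 and Remark~3.5 of \cite{Ernst} and states the result. So there is no in-paper argument to compare against.

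Your argument is the classical one and is correct. The reduction to completeness is exactly right, and the analytic-continuation proof via the holomorphic function $G(\zeta) = \int g(z)\,e^{i\zeta z}\,d\mu(z)$ on the strip $|\mathrm{Im}\,\zeta| < \delta$ is the standard route: the finite-MGF hypothesis furnishes the uniform majorant $|g(z)|\,e^{\delta|z|} \in L^1(\mu)$ (via Cauchy--Schwarz and $e^{2\delta|z|} \in L^1(\mu)$), which is precisely what is needed for Morera/Fubini and for differentiating under the integral. The identity theorem and Fourier uniqueness for finite measures then finish the job. Your remark that the sum in (\ref{fK}) ought to start at $k=0$ (or that one should assume $a_0=0$, as the paper in fact does in assumption~(iv)) is accurate and worth noting. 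The alternate route you sketch---approximating $e^{itz}$ in $L^2(\mu)$ by truncated Taylor series using the exponential-moment bound---is equally valid and amounts to the same estimate.
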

\noindent Additional conditions under which (\ref{fK}) holds are given by Theorem 3.5 of \cite{Ernst}. 


\section{Main Results} \label{sec2}
We make the following assumptions: 
\begin{enumerate}[label=(\roman*)] 
   \item The singular vectors of $X_n$ are incoherent with respect to the standard basis:
   \begin{align*}
        \hspace{2.5cm}   \sqrt{n} \cdot \|u_i(X_n)\|_\infty \|v_i(X_n)\|_\infty \longrightarrow 0 ,   \hspace{2.5cm}  1 \leq i \leq r. 
   \end{align*} This implies that the elements of $\sqrt{n} X_n$ uniformly converge to zero. 
   
   \item The law $\mu$ of elements of $Z_n$ has infinite support and finite moments.
   
    \item The transformation  $f \colon \mathbb{R} \to \mathbb{R}$ is polynomially bounded (implying $f \in L^2(\mathbb{R}, \mu)$ by assumption (ii)) and continuous $\mu$-almost everywhere. 
    \item Let $\{q_k\}_{k \in \mathbb{N}_0}$ denote the orthogonal polynomials with respect to $\mu$ guaranteed by Lemma \ref{lem:ortho1}. Defining $a_k \coloneqq \langle f, q_k \rangle_\mu$ and $f_K(z) \coloneqq \sum_{k=1}^K a_k q_k(z)$ as in Lemma \ref{lem:ortho2}, we assume that $a_0 = 0$ and     \end{enumerate}
    \vspace{-.3cm}
     \begin{gather}
        \lim_{K \rightarrow \infty} \int (f(z) - f_K(z))^2   d\mu(z)  = 0 , \label{assumption4.0} \\ 
        \lim_{K \rightarrow \infty} \lim_{x \rightarrow 0}    \frac{1}{x}  \int ( f(x+z) - f_K(x+z)) d\mu(z) = 0 \label{assumption4.1} .      
    \end{gather} 

For a discussion of these conditions, see Remarks \ref{rem1}--\ref{rem4}. In particular, 
if the elements of $Z_n$ are Gaussian, assumption (iii)  directly implies (iv).

Define the coefficients $b_{k} \coloneqq  \langle q_k', 1 \rangle_\mu$ 
and the constant  
    \begin{align} \tau(f,\mu)  \coloneqq  \frac{1}{\|f\|_\mu} \sum_{k=1}^\infty a_k b_k =   \frac{1}{\|f\|_\mu}  \lim_{K\rightarrow \infty}\int f_K'(z) d\mu(z) .\label{tau_def}
    \end{align}
As $\sum_{k=1}^\infty a_kb_k$ converges by Lemma \ref{lem:tau}, $\tau(f,\mu)$ is well defined and the second equality holds (note that $\int f_K'(z) d\mu(z) = \sum_{k=1}^K a_k b_k$). Without loss of generality, we assume that $\tau(f,\mu) \geq 0$ (otherwise, our results apply to $-Y_n$). The spectral properties of $Y_n$ (namely, the limiting behavior of the leading singular values and vectors) 
are asymptotically equivalent to those of a spiked matrix with signal-to-noise ratio $\tau(f,\mu)$:
\begin{theorem}\label{thrm1} Let $Y_n \coloneqq n^{-1/2} f(\sqrt{n} X_n + Z_n)$ and define the matrix
    \[    A_n \coloneqq \tau(f,\mu) \|f\|_\mu X_n + \frac{1}{\sqrt{n}} f(Z_n) . 
    \]
    In the asymmetric or symmetric setting, under assumptions (i)--(iv), 
    \[
    \|Y_n - A_n \|_2 \xrightarrow[]{a.s.} 0 . 
    \]
\end{theorem}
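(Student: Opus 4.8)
The plan is to expand $f$ in the orthogonal polynomial basis, reduce to polynomial $f$, and then Taylor-expand each polynomial in the vanishingly small signal $\sqrt nX_n$. Write $f_K=\sum_{k=1}^K a_kq_k$ for the degree-$K$ truncation, set $\beta_K:=\sum_{k\le K}a_kb_k$, and let $A_n^{(K)}:=\beta_K X_n+n^{-1/2}f_K(Z_n)$ and $Y_n^{(K)}:=n^{-1/2}f_K(\sqrt nX_n+Z_n)$. By the triangle inequality it suffices to prove: (a) for each fixed $K$, $\|Y_n^{(K)}-A_n^{(K)}\|_2\xrightarrow{a.s.}0$; and (b) a.s. $\limsup_n\|Y_n-Y_n^{(K)}\|_2$ and $\limsup_n\|A_n-A_n^{(K)}\|_2$ are both bounded by a quantity $o_K(1)$. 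Intersecting the relevant probability-one events over $K\in\mathbb N$ and letting $K\to\infty$ then gives $\limsup_n\|Y_n-A_n\|_2=0$ a.s.

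\emph{Step (a).} Since $q_k$ has degree $k$, we have the exact expansion $q_k(\sqrt nx+z)=\sum_{l=0}^k\frac{(\sqrt nx)^l}{l!}q_k^{(l)}(z)$, so, with $D_\cdot$ denoting diagonal matrices,
\[
Y_n^{(K)}=n^{-1/2}f_K(Z_n)+f_K'(Z_n)\odot X_n+\sum_{l=2}^K\frac{1}{l!}\,f_K^{(l)}(Z_n)\odot\big(n^{-1/2}(\sqrt nX_n)^{\odot l}\big).
\]
The first term is the noise part of $A_n^{(K)}$. The entries of $f_K'(Z_n)$ are i.i.d.\ with mean $\E_\mu[f_K']=\beta_K$, so writing $f_K'(Z_n)=\beta_K\mathbf 1\mathbf 1^\top+W_n$ with $W_n$ centered gives $f_K'(Z_n)\odot X_n=\beta_K X_n+W_n\odot X_n$, and $\beta_K X_n$ is the signal part of $A_n^{(K)}$. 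It remains to show the error terms $\|W_n\odot X_n\|_2$ and $\|f_K^{(l)}(Z_n)\odot(n^{-1/2}(\sqrt nX_n)^{\odot l})\|_2$ ($2\le l\le K$) vanish a.s. Here I exploit the rank-$r$ structure: writing $X_n=\sum_s\sigma_s u_sv_s^\top$, any Hadamard product is $M\odot X_n=\sum_s\sigma_s D_{u_s}MD_{v_s}$, whose operator norm is governed by $\max_s\|u_s\|_\infty$, $\max_s\|v_s\|_\infty$ and the leading singular values of $M$. The delocalization $\max_s\|u_s\|_\infty,\max_s\|v_s\|_\infty\to0$ follows from assumption (i): $\max_i\sum_j x_{ij}^2\ge\sigma_r^2\max_i\|U^\top e_i\|_2^2$ while $\max_i\sum_j x_{ij}^2\le p\,\|\sqrt nX_n\|_{\max}^2/n\to0$. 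For the $l\ge2$ terms one additionally splits $f_K^{(l)}(Z_n)$ into its mean (handled by $n^{-1/2}\|(\sqrt nX_n)^{\odot l}\|_2\to0$, which is assumption (i) combined with the low-rank structure) and a centered part (handled as above). Since by assumption (ii) the entries of $Z_n$ have all moments and $f_K',f_K^{(l)}$ are polynomially bounded, these operator-norm statements are upgraded to almost sure convergence by truncating the entries at level $n^\epsilon$, applying a sharp operator-norm bound for random matrices with independent, bounded, inhomogeneous entries, and Borel--Cantelli; the truncation error is super-polynomially small by Markov's inequality on the moments. The symmetric case is identical with $X_n=\sum_s\lambda_sv_sv_s^\top$ and symmetric Hadamard products.

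\emph{Step (b).} We have $A_n-A_n^{(K)}=(\tau(f,\mu)\|f\|_\mu-\beta_K)X_n+n^{-1/2}(f-f_K)(Z_n)$; the first term has norm $|\tau(f,\mu)\|f\|_\mu-\beta_K|\,\sigma_1(X_n)=o_K(1)$ by the definition of $\tau(f,\mu)$ and the convergence of $\sum_k a_kb_k$, and $(f-f_K)(Z_n)$ has i.i.d.\ entries with mean $\E_\mu[f-f_K]=0$ (since $a_0=0$) and variance $\|f-f_K\|_\mu^2\to0$ by (\ref{assumption4.0}), so the classical edge bound for finite-moment i.i.d.\ matrices gives $\limsup_n\|n^{-1/2}(f-f_K)(Z_n)\|_2\le(1+\sqrt\gamma)\|f-f_K\|_\mu\to0$ a.s. For $Y_n-Y_n^{(K)}=n^{-1/2}(f-f_K)(\sqrt nX_n+Z_n)$ one decomposes into conditional mean and fluctuation: the $(i,j)$ conditional mean is $n^{-1/2}\!\int(f-f_K)(\sqrt nx_{ij}+z)\,d\mu(z)$, which by (\ref{assumption4.1}) and the uniform smallness of $\sqrt nx_{ij}$ equals $(\epsilon_K+o(1))x_{ij}$ with $\epsilon_K\to0$, so the conditional-mean matrix is $O(\epsilon_K)$ in operator norm; the fluctuation has entrywise conditional variance $\le n^{-1}\!\int(f-f_K)^2(\sqrt nx_{ij}+z)\,d\mu(z)$, which by the $\mu$-a.e.\ continuity of $f$, polynomial boundedness, and dominated convergence tends to $n^{-1}\|f-f_K\|_\mu^2$, so the fluctuation has operator norm $O(\|f-f_K\|_\mu)$ a.s.\ (again via truncation and a moment bound). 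This is precisely where assumptions (\ref{assumption4.0})--(\ref{assumption4.1}) are used.

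\emph{Main obstacle.} The delicate part is Step (a)'s control of the Hadamard-product error terms, especially $\|W_n\odot X_n\|_2\to0$ for $W_n$ a centered random matrix whose entries are only polynomially bounded (not sub-Gaussian): this needs the exact low-rank decomposition of $X_n$, the singular-vector delocalization extracted from assumption (i), truncation of the noise entries, and a sharp operator-norm inequality for random matrices with an inhomogeneous variance profile, all kept uniform enough to survive the $K\to\infty$ limit.
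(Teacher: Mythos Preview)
Your overall architecture---truncate to $f_K$, Taylor-expand, then send $K\to\infty$ using (\ref{assumption4.0})--(\ref{assumption4.1})---is exactly the paper's, and Step (b) is Lemma~\ref{lem1} essentially verbatim. The gap is in Step (a), where you control $\|W_n\odot X_n\|_2$ via the rank decomposition $W_n\odot X_n=\sum_s\sigma_s D_{u_s}W_nD_{v_s}$ together with the crude bound $\|D_{u_s}W_nD_{v_s}\|_2\le\|u_s\|_\infty\|v_s\|_\infty\|W_n\|_2$. Since $\|W_n\|_2\asymp\sqrt n$, this needs $\sqrt n\,\|u_s\|_\infty\|v_s\|_\infty\to0$, which is precisely the \emph{sufficient} condition singled out in Remark~\ref{rem1} and is \emph{not} implied by assumption (i). A rank-two example with $n=p$: take $\sigma_1=2$, $\sigma_2=1$, set $u_{1,1}=v_{1,1}=n^{-1/4}$ and $u_{2,1}=-v_{2,1}=\sqrt2\,n^{-1/4}$, and make all remaining coordinates of order $n^{-1/2}$. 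The two rank-one pieces cancel at entry $(1,1)$, and one checks that $\|\sqrt n X_n\|_{\max}\to0$ and the Hadamard-power condition holds, yet $\sqrt n\,\|u_1\|_\infty\|v_1\|_\infty\ge1$. Your derivation that $\|u_s\|_\infty,\|v_s\|_\infty\to0$ is correct but not strong enough. The same obstruction hits the $l\ge2$ terms, where in addition $(\sqrt nX_n)^{\odot l}$ is no longer rank $r$, so ``handled as above'' via the rank decomposition is not available as written.

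The paper's Lemma~\ref{lem2} sidesteps the rank decomposition entirely: it applies the inhomogeneous Yin--Bai--Krishnaiah bound (Lemma~\ref{lem:max_eig}) \emph{directly} to the Hadamard product. The entries of $(\sqrt nX_n)^{\odot\ell}\odot\big(q_k^{(\ell)}(Z_n)-\langle q_k^{(\ell)},1\rangle_\mu\,\mathbf{1}_n\mathbf{1}_p^\top\big)$ are independent, centered, with maximal variance $\|\sqrt nX_n\|_{\max}^{2\ell}\,\mathrm{Var}\,q_k^{(\ell)}(z_{11})\to0$ and uniformly bounded fourth moments, so Lemma~\ref{lem:max_eig} gives $n^{-1/2}$ times the operator norm tending to zero almost surely---no low-rank structure or singular-vector delocalization is needed, only the first clause of assumption (i). You already invoke exactly this tool for the fluctuation part in Step (b); using it here in place of the rank decomposition closes the gap and actually shortens the argument.
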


As a consequence of convergence in operator norm, we have the following corollaries:
\begin{corollary} \label{cor1}
In the asymmetric setting, under assumptions (i)--(iv), the ESD of $\|f\|_\mu^{-2} Y_n^\top Y_n$
converges weakly almost surely to the  Marchenko--Pastur law with parameter $\gamma$. 
Furthermore, 
\begin{align} \label{g27v} && \hspace{2.62cm}
\sigma_i^2(Y_n) \xrightarrow[]{a.s.} \lambda(\tau(f,\mu) \sigma_i(X_n), \gamma ) , && \hspace{.72cm}1 \leq i \leq r .   
\end{align}
The limiting angles between the singular vectors of $X_n$ and $Y_n$ are given by
\begin{equation}
 \begin{aligned}  \label{g28v} \hspace{3.75cm}
    \langle u_i(X_n), u_j(Y_n) \rangle^2 & \xrightarrow{a.s.} \delta_{ij} \cdot c_1^2 (\tau(f,\mu) \sigma_i(X_n), \gamma) , \\  \langle v_i(X_n), v_j(Y_n) \rangle^2 & \xrightarrow{a.s.} \delta_{ij} \cdot c_2^2 (\tau(f,\mu)\sigma_i(X_n), \gamma) ,  
\end{aligned}   \hspace{1.65cm} 1\leq i,j \leq r .    
\end{equation}
\end{corollary}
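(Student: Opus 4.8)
The plan is to derive Corollary~\ref{cor1} from Theorem~\ref{thrm1} by observing that the comparison matrix $A_n$ is itself a spiked matrix of the type covered by Lemma~\ref{lem:bgn12}, and then transferring each spectral conclusion from $A_n$ to $Y_n$ by means of the operator-norm bound $\|Y_n - A_n\|_2 \xrightarrow{a.s.} 0$ together with classical perturbation inequalities. Set $\widetilde A_n \coloneqq \|f\|_\mu^{-1} A_n = \tau(f,\mu)\, X_n + n^{-1/2} W_n$, where $W_n \coloneqq \|f\|_\mu^{-1} f(Z_n)$. The entries of $W_n$ are i.i.d.\ (a fixed Borel function of the i.i.d.\ entries of $Z_n$), have mean $\|f\|_\mu^{-1} a_0 = 0$ by assumption~(iv), variance $\|f\|_\mu^{-2} \int f^2 \, d\mu = 1$, and finite moments of all orders since $f$ is polynomially bounded and $\mu$ has finite moments. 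Hence $\widetilde A_n$ is a standard spiked matrix with deterministic rank-$r$ signal $\tau(f,\mu) X_n$, whose singular values $|\tau(f,\mu)| \sigma_i(X_n)$ are fixed; since $\lambda$, $c_1^2$ and $c_2^2$ depend on their first argument only through its square, the sign of $\tau(f,\mu)$ plays no role. Lemma~\ref{lem:bgn12} therefore applies to $\widetilde A_n$ --- no Haar hypothesis being required, as in Remark~\ref{rem_bgn1}, because $Z_n$ has i.i.d.\ entries and one may invoke the isotropic local Marchenko--Pastur law --- and it produces the Marchenko--Pastur ESD limit, the limits $\sigma_i^2(\widetilde A_n) \xrightarrow{a.s.} \lambda(\tau(f,\mu)\sigma_i(X_n),\gamma)$, and the singular-vector angle limits of the corollary, all with $Y_n$ replaced by $\widetilde A_n$ (equivalently by $A_n$, after restoring the scalar $\|f\|_\mu$).

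It remains to pass from $A_n$ to $Y_n$. By Weyl's inequality, $\max_i |\sigma_i(Y_n) - \sigma_i(A_n)| \le \|Y_n - A_n\|_2 \to 0$ a.s., and since $\sigma_1(A_n)$ is a.s.\ bounded this upgrades to $\max_i |\sigma_i^2(Y_n) - \sigma_i^2(A_n)| \to 0$; hence the empirical spectral distributions of $\|f\|_\mu^{-2} Y_n^\top Y_n$ and $\|f\|_\mu^{-2} A_n^\top A_n$ share the same weak limit, the Marchenko--Pastur law with parameter $\gamma$, and the same bound yields (\ref{g27v}) from the singular-value limits for $A_n$. For the singular vectors in the \emph{supercritical} regime --- at any index $j$ with $\tau(f,\mu) \sigma_j(X_n) > \gamma^{1/4}$ --- the limiting squared singular value $\lambda(\tau(f,\mu)\sigma_j(X_n),\gamma)$ exceeds $(1+\sqrt\gamma)^2$, and, because the $\sigma_i(X_n)$ are distinct and $\sigma \mapsto \lambda(\sigma,\gamma)$ is strictly increasing on $(\gamma^{1/4},\infty)$, it is separated by an $\Omega(1)$ gap from every other singular value of $\widetilde A_n$ (including the subcritical ones, which pile up at $1+\sqrt\gamma$). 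Wedin's $\sin\Theta$ theorem then gives $\min_{s \in \{\pm 1\}} \|u_j(Y_n) - s\, u_j(A_n)\|_2 \to 0$, and similarly for $v_j$, so that $\langle u_i(X_n), u_j(Y_n)\rangle^2 - \langle u_i(X_n), u_j(A_n)\rangle^2 \to 0$ and (\ref{g28v}) follows from the corresponding limit for $A_n$.

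The one genuinely delicate case --- and the main obstacle --- is the \emph{subcritical} part of (\ref{g28v}): showing $\langle u_i(X_n), u_j(Y_n)\rangle^2 \xrightarrow{a.s.} 0$, and the analogue for $v_j$, whenever $\tau(f,\mu)\sigma_j(X_n) \le \gamma^{1/4}$. In this regime $\widetilde A_n$ has no outlier near index $j$: its top $r$ singular values all converge to the edge $1+\sqrt\gamma$ and are separated only on the microscopic scale, so operator-norm closeness of $Y_n$ to $A_n$ does not control $u_j(Y_n)$ individually and the Wedin argument is unavailable; moreover, a crude decomposition of $\langle u_i(X_n), Y_n v_j(Y_n)\rangle$ is obstructed by the dependence of $v_j(Y_n)$ on $Z_n$. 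This case must be handled directly, by the resolvent argument used to prove (\ref{g28v}) in Section~\ref{sec:proofs}: one forms quadratic forms of the resolvents of $Y_n^\top Y_n$ and $Y_n Y_n^\top$ against the fixed vectors $v_i(X_n)$ and $u_i(X_n)$ at spectral parameters approaching the subcritical top of the spectrum and shows, via the isotropic local Marchenko--Pastur law of Remark~\ref{rem_bgn1}, that they concentrate on deterministic multiples of the identity, which forces the overlaps with the fixed subspaces $\mathrm{range}(X_n)$ and $\mathrm{range}(X_n^\top)$ to vanish; as recorded in Remark~\ref{rem_bgn3}, that argument already subsumes the $r > 1$ subcritical case of the classical model~(\ref{-1}). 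A complete proof of Corollary~\ref{cor1} thus combines the clean operator-norm reduction above --- which settles the ESD, the singular values, and the supercritical singular vectors --- with this direct treatment of the subcritical singular vectors.
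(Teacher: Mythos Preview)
Your proposal is correct and follows essentially the same route as the paper: reduce to $A_n$ via Theorem~\ref{thrm1}, invoke Lemma~\ref{lem:bgn12} on the normalized spiked matrix $\|f\|_\mu^{-1}A_n$, transfer the ESD and singular-value limits by Weyl, handle supercritical singular vectors by Davis--Kahan/Wedin, and treat the subcritical case by the direct resolvent computation carried out in Section~\ref{sec:proofs}. The paper's execution of the subcritical step is slightly more concrete than your sketch---it works with the matrix $K_n(z)=V_n^\top Y_n^\top Y_n W_n(zI-W_n^\top Y_n^\top Y_n W_n)^{-1}W_n^\top Y_n^\top Y_n V_n$ and shows $\sigma_r(\partial_z K_n(\sigma_i^2(Y_n)))\to\infty$ at the edge---but the strategy is the one you describe.
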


\begin{corollary} \label{cor2}
  In the symmetric setting, under assumptions (i)--(iv), the ESD of $\|f\|_\mu^{-1} Y_n$  
converges weakly almost surely to the  semicircle law. 
Furthermore, 
\begin{equation}
    \begin{aligned}
     \hspace{4.55cm} \lambda_i(Y_n) & \xrightarrow{a.s.} \bar \lambda(\tau(f,\mu)\lambda_i(X_n)) , && \hspace{1.65cm} 1 \leq i \leq r_+ , \\
      \hspace{4.55cm} \lambda_{n-i+1}(Y_n) &\xrightarrow{a.s.} \bar \lambda(\tau(f,\mu) \lambda_{n-i+1}(X_n)) , &&  \hspace{1.65cm} 1 \leq i \leq r_- .
\end{aligned}
\end{equation}
The limiting angles between the eigenvectors of $X_n$ and $Y_n$ are given by
\begin{align}
&&
    \langle v_i(X_n), v_j(Y_n) \rangle^2 \xrightarrow{a.s.} \delta_{ij} \cdot \bar c^2(\tau(f,\mu) \lambda_i(X_n)), && i,j \in \{1, \ldots, r_+\} \cup \{n-r_-+1, \ldots, n\}. 
\end{align}
\end{corollary}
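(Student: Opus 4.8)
The plan is to derive Corollary \ref{cor2} from Theorem \ref{thrm1} together with the symmetric spiked-matrix lemma above (the analogue of Lemma \ref{lem:bgn12}); the substantive work is transporting spectral information across the operator-norm approximation $\|Y_n - A_n\|_2 \xrightarrow{a.s.} 0$ supplied by Theorem \ref{thrm1}. The first step is to recognize that, up to scaling, $A_n$ is a genuine spiked Wigner matrix. Since $q_0 \equiv 1$ ($\mu$ being the probability law of an entry), assumption (iv) forces $a_0 = \int f\, d\mu = 0$, so the i.i.d.\ on-and-above-diagonal entries of $f(Z_n)$ have mean zero; their variance equals $\int f^2\, d\mu = \|f\|_\mu^2 < \infty$, and all higher moments are finite because $f$ is polynomially bounded and $\mu$ has finite moments (assumptions (ii)--(iii)). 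Hence $W_n \coloneqq \|f\|_\mu^{-1} f(Z_n)$ is a generalized Wigner matrix with unit-variance entries, and $\|f\|_\mu^{-1} A_n = \tau(f,\mu) X_n + n^{-1/2} W_n$ is a spiked Wigner matrix whose signal $\tau(f,\mu) X_n$ has rank $r$ and eigenvalues $\tau(f,\mu)\lambda_i(X_n)$. The symmetric spiked-matrix lemma then applies to it (the rotational-invariance hypothesis on the signal being dispensable, since $W_n$ has i.i.d.\ entries, as in Remark \ref{rem_bgn1} and its symmetric counterpart), giving the claimed limits for the ESD of $\|f\|_\mu^{-1} A_n$, for its outlier eigenvalues $\bar\lambda(\tau(f,\mu)\lambda_i(X_n))$, and for the eigenvector overlaps $\bar c^2(\tau(f,\mu)\lambda_i(X_n))$; and since eigenvectors are scale-invariant, $v_i(A_n) = v_i(\|f\|_\mu^{-1} A_n)$.

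It then remains to pass from $A_n$ to $Y_n$. For the ESD, $\|f\|_\mu^{-1} Y_n$ and $\|f\|_\mu^{-1} A_n$ differ in operator norm by $o(1)$ a.s., so by Weyl's inequality their eigenvalue quantiles, hence their empirical distributions in L\'evy distance, differ by $o(1)$; thus the ESD of $\|f\|_\mu^{-1} Y_n$ converges to the semicircle law. For the outlier eigenvalues, Weyl's inequality gives $|\lambda_i(\|f\|_\mu^{-1} Y_n) - \lambda_i(\|f\|_\mu^{-1} A_n)| \le \|f\|_\mu^{-1}\|Y_n - A_n\|_2 \to 0$, so they share their limits. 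For the eigenvectors in the supercritical regime $|\tau(f,\mu)\lambda_i(X_n)| > 1$, the limiting eigenvalue $\bar\lambda(\tau(f,\mu)\lambda_i(X_n))$ lies strictly outside $[-2,2]$ and, the $\lambda_i(X_n)$ being distinct, is separated from the rest of the limiting spectrum by a positive gap; so a.s.\ eventually the $j$-th eigenvalue of $\|f\|_\mu^{-1}A_n$ is isolated by a gap of order one, and the Davis--Kahan $\sin\Theta$ theorem yields $\|v_j(Y_n)v_j(Y_n)^\top - v_j(A_n)v_j(A_n)^\top\|_2 \le C\,\|Y_n - A_n\|_2 \to 0$, whence $\langle v_i(X_n), v_j(Y_n)\rangle^2$ and $\langle v_i(X_n), v_j(A_n)\rangle^2$ share the limit $\delta_{ij}\,\bar c^2(\tau(f,\mu)\lambda_i(X_n))$.

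The one place where operator-norm closeness is too weak --- and the step I expect to be the main obstacle --- is the eigenvector statement in the subcritical regime $|\tau(f,\mu)\lambda_j(X_n)| \le 1$, where $\lambda_j(Y_n) \to \pm 2$ lies at the spectral edge and the Davis--Kahan gap degenerates. The remedy is to work with resolvents, which remain stable under $o(1)$ perturbations: by the resolvent identity, for fixed $z$ with $\Im z \neq 0$, $\|(\|f\|_\mu^{-1} Y_n - z)^{-1} - (\|f\|_\mu^{-1} A_n - z)^{-1}\|_2 \le \|f\|_\mu^{-1}|\Im z|^{-2}\,\|Y_n - A_n\|_2 \to 0$ a.s., so the quadratic forms $v_i(X_n)^\top(\|f\|_\mu^{-1} Y_n - z)^{-1} v_i(X_n)$ have the same a.s.\ limit as those of the spiked Wigner matrix $\tau(f,\mu) X_n + n^{-1/2} W_n$. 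That limit is obtained by a Woodbury/Schur-complement reduction in the rank-$r$ signal, together with the isotropic local semicircle law (Theorem 2.12 of \cite{Bloe}), which gives $v_i(X_n)^\top(n^{-1/2} W_n - z)^{-1} v_j(X_n) \to m_{\mathrm{sc}}(z)\,\delta_{ij}$ with $m_{\mathrm{sc}}$ the Stieltjes transform of the semicircle law; the resulting explicit expression has no pole as $z \to \pm 2$, and Stieltjes inversion over a shrinking spectral window at the edge --- as in the subcritical part of \cite{BGN11}, now carried out with the resolvent of $Y_n$ in place of that of the plain spiked matrix --- yields $\langle v_i(X_n), v_j(Y_n)\rangle^2 \to 0$. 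The hypothesis $\tau(f,\mu) \ge 0$ serves only to fix which side of the spectrum hosts the outlier attached to a given sign of $\lambda_i(X_n)$; the general case is identical after relabelling. Assembling the four parts completes the proof.
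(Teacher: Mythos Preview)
Your proposal is correct and, for the ESD, the outlier eigenvalues, and the supercritical eigenvectors, matches the paper's approach essentially step for step: recognize $\|f\|_\mu^{-1}A_n$ as a spiked Wigner matrix, apply the symmetric spiked-matrix lemma, and transport to $Y_n$ via Weyl and Davis--Kahan.

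The one genuine difference is your handling of the subcritical eigenvectors. You propose to pass resolvent quadratic forms from $Y_n$ to $A_n$ via the resolvent identity, compute their limit by Woodbury and the isotropic local semicircle law, and then conclude by ``Stieltjes inversion over a shrinking spectral window at the edge.'' This can be made to work, but the phrase hides a two-step argument: (i) the quadratic form $v_i(X_n)^\top(\|f\|_\mu^{-1}Y_n - z)^{-1}v_i(X_n)$ is the Stieltjes transform of the spectral measure $\mu_n^i = \sum_k \langle v_i(X_n), v_k(Y_n)\rangle^2 \delta_{\lambda_k}$, so its convergence gives weak convergence of $\mu_n^i$ to a measure with no atom at $\pm 2$; (ii) one then needs Portmanteau on a closed $\varepsilon$-neighborhood of the edge to bound the single atom $\langle v_i(X_n), v_j(Y_n)\rangle^2$, sending $\varepsilon \to 0$ after $n \to \infty$. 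The paper (in its proof of the asymmetric analogue, which it says adapts directly to the symmetric case) takes a different route following \cite{MJMY201}: it introduces a Schur-complement matrix $K_n(z)$ built from $Y_n$ satisfying the exact identity
\[
v_j(Y_n)^\top V_n\bigl(I_r + \partial_z K_n(\lambda_j(Y_n))\bigr) V_n^\top v_j(Y_n) = 1,
\]
shows $K_n(z)$ converges on $\mathbb{C}^+$ to an explicit diagonal matrix (by comparing to the analogous quantity for $A_n$ via the resolvent identity, then invoking the isotropic local law), and uses monotonicity in $\Im z$ together with the divergence of $\partial_z m_{\mathrm{sc}}$ at the edge to force $\|V_n^\top v_j(Y_n)\|_2 \to 0$ directly. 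Your spectral-measure argument is conceptually close to \cite{BGN11}; the paper's identity-based argument is a bit more direct because the algebraic identity isolates exactly the overlap one wants, with no need for a separate atom-control step.
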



Theorem \ref{thrm1} reveals that PCA is able to recover signal under highly nonlinear or discontinuous transformations. In high dimensions, a phase transition occurs: for signal-to-noise ratios above a threshold---depending simultaneously on  $f$, $\mu$, and $\gamma$---$X_n$ gives rise to outlier singular values in the spectrum of $Y_n$, and the corresponding singular vectors of $Y_n$ contain information about $X_n$. Below this threshold, the singular vectors of $Y_n$ are asymptotically orthogonal to those of $X_n$. 
In contrast, in the standard setting where $X_n + n^{-1/2} Z_n$ is observed directly, the analogous threshold depends only on $\gamma$ and the variance of noise. 

\begin{remark} \label{rem1} The assumption that $a_0 = 0$ is for convenience. Otherwise, in the asymmetric setting, we have \[ \|Y_n - A_n - a_0 {\bf 1}_n {\bf 1}_p^\top \|_2 \xrightarrow{a.s.} 0, \]
where ${\bf 1}_n$ is the all-ones vector of length $n$. The spectrum of $Y_n$ contains an additional outlier eigenvalue, located approximately at $a_0 \sqrt{np}$. This rank-one term may be eliminated by centering the columns of $Y_n$. An analogous statement holds in the symmetric setting.
\end{remark}

\begin{remark} 
    Equation  (\ref{assumption4.0}) holds if $\{q_k\}_{k \in \mathbb{N}_0}$ is dense in $L^2(\mathbb{R},\mu)$ (see Lemma \ref{lem:ortho2}).
\end{remark}

\begin{remark}\label{rem1h23}
  Assumptions (iii) and (\ref{assumption4.0}) imply (\ref{assumption4.1}) if  $\mu$ has differentiable density $\omega$, $\text{supp}(\omega) = \mathbb{R}$, and $\omega'/\omega$ is polynomially bounded (see Lemma \ref{lem:assumption1}). 
\end{remark}

\begin{remark}
  Suppose (1) $f$ is differentiable, (2) $\mu$ has differentiable density $\omega$,  and (3) $\omega'/\omega$ is polynomially bounded. Then, $\tau(f,\mu)$ has the following simple form:
  \begin{align} \tau(f,\mu)= \frac{\E f'(z_{11})}{\|f\|_\mu} . \label{qofg} \end{align} 
  We prove this fact in the appendix, Lemma \ref{lem:tau_f'}. Note that if $f$ is differentiable on a set $B$ with $\mu(B) = 1$, one might expect given (\ref{qofg}) that $\tau(f,\mu) = \|f\|_\mu^{-1} \E[{\bf 1}_B(z_{11}) f'(z_{11})]$. This is not necessarily true: as a counterexample, consider Gaussian noise and $f(z) = {\bf 1}(z \leq 0) - 1/2$. Then, $\int_{\mathbb{R} \backslash \{0\}} f'(z) d\mu(z) = 0$, yet $\tau(f,\mu) \neq 0$ (see Section \ref{sec:ex}).  
\end{remark}

\begin{remark}  \label{rem3}
    Certain measures induce orthogonal polynomials whose derivatives are sums of a finite number of polynomials of the same family, 
    in which case $\tau(f,\mu)$ may simplify. For example, the Hermite polynomials  (an orthogonal basis with respect to the Gaussian measure) satisfy $q_k'(z) = \sqrt{k} q_{k-1}(z)$; hence, $\tau(f,\mu) = \|f\|_\mu^{-1} a_1$.       
\end{remark}

If $\tau(f, \mu) = 0$, Theorem \ref{thrm1} implies that PCA applied to $n^{-1/2} f(\sqrt{n} X_n + Z_n)$ fails. In this situation, the critical scaling of $X_n$ under which a phase transition occurs is  $n^{1-1/(2\ell_*)}$, where $\ell_* > 1$ is defined as follows: let 
$b_{k \ell} \coloneqq  \langle q_k^{(\ell)}, 1 \rangle_\mu$,
    \begin{align*} \tau_\ell(f,\mu)  \coloneqq  \frac{1}{\|f\|_\mu} \sum_{k=\ell}^\infty a_k b_{k\ell} =   \frac{1}{\|f\|_\mu}  \lim_{K\rightarrow \infty}\int f_K^{(\ell)}(z) d\mu(z) ,  \end{align*}
    and 
    \[ \phantom{.}
\ell_* \coloneqq \inf \{\ell \in \mathbb{N}: \tau_\ell(f,\mu) \neq 0\} .
\]  As 
$\sum_{k=1}^\infty a_k b_{k\ell}$ converges by Lemma \ref{lem:tau2}, $\tau_\ell(f,\mu)$ is well defined.  Note that $\tau_1(f,\mu) = \tau(f,\mu)$.  Without loss of generality, we assume that $\tau_{\ell^*}(f,\mu) > 0$ (otherwise, our results apply to $-Y_n$).  

 To make precise statements, we require stronger assumptions than (i)--(iv):

\begin{enumerate}[label=(\roman*$'$)] 
   \item $X_n$ is rank one and the elements of $n^{1-1/(2\ell_*)} X_n$ uniformly converge to zero. For even $\ell \in \mathbb{N}$, the empirical moments of the elements of $\sqrt{n} u_1(X_n)$ and $\sqrt{p} v_1(X_n)$ converge: 
\begin{align*}
    & m_\ell^u  \coloneqq \lim_{n \rightarrow \infty} \frac{1}{n} \| \sqrt{n} u_1(X_n)\|_\ell^\ell , & &
     m_\ell^v \coloneqq \lim_{n \rightarrow \infty} \frac{1}{p} \| \sqrt{p} v_1(X_n)\|_\ell^\ell  .  
\end{align*}
   \item The law $\mu$ has probability density $\omega$, $\text{supp}(\omega) = \mathbb{R}$,  and $\omega$ has finite  moment generating function in a neighborhood of zero. Additionally, $\omega^{(\ell)}$ exists and $\omega^{(\ell)}/\omega$ is polynomially bounded for $\ell \leq \ell_*$. 
    \item The transformation  $f \colon \mathbb{R} \to \mathbb{R}$ is polynomially bounded and almost everywhere continuous  with respect to the Lesbesgue measure. Additionally,  $a_0 = \langle f, q_0 \rangle_\mu = 0$.
    \item Let $\ell_* < \infty$. For $\ell < \ell_*$, we assume that for all sufficiently large integers $K$,
    \[
    \sum_{k=\ell}^K a_k b_{k\ell} = 0 . 
    \]
    Recall that by definition of $\ell_*$, $\sum_{k=\ell}^\infty a_k b_{k\ell} = 0$ for $\ell < \ell_*$.
    \end{enumerate}
See Remarks \ref{rem26}--\ref{:(} for comments on these assumptions (particularly (iv$\mathrm{'}$), which is rather opaque).

We now state our extension of Theorem \ref{thrm1}:
\begin{theorem}
\label{thrm2} Let $Y_n \coloneqq n^{-1/2} f(n^{1-1/(2\ell_*)} X_n + Z_n)$ and define the normalized vectors 
\begin{align*}
  &   \tilde u_1 \coloneqq \frac{(u_1(X_n))^{\odot \ell_*}}{\|(u_1(X_n))^{\odot \ell_*}\|_2} , && \tilde v_1 \coloneqq \frac{(v_1(X_n))^{\odot \ell_*}}{\|(v_1(X_n))^{\odot \ell_*}\|_2} ,
\end{align*} and the matrix
    \[    A_{\ell_*, n} \coloneqq \frac{\tau_{\ell_*}(f,\mu) \sqrt{m_{2\ell_*}^u m_{2\ell_*}^v}  \|f\|_\mu}{\ell_*! \gamma^{(\ell_* -1)/2}} \sigma_1^{\ell_*}(X_n) \tilde u_1 \tilde v_1^\top + \frac{1}{\sqrt{n}} f(Z_n) . 
    \]
    In the asymmetric or symmetric setting, under assumptions (i$\mathrm{'}$)--(iv$\mathrm{'}$), 
    \[
    \|Y_n - A_{\ell*, n} \|_2 \xrightarrow[]{a.s.} 0 . 
    \]
\end{theorem}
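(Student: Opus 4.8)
The plan is to mimic the proof of Theorem~\ref{thrm1}, replacing the first-order Taylor expansion of $f$ around $Z_n$ with an $\ell_*$-th order expansion. Write $W_n \coloneqq \sqrt{n}\,A_n^{(\mathrm{sig})}$ where $A_n^{(\mathrm{sig})} \coloneqq n^{1-1/(2\ell_*)}X_n$ is the signal part; by assumption~(i$'$) the entries of $W_n$ tend to zero uniformly, and since $X_n$ is rank one, $W_n = s_n\,\hat u\hat v^\top$ with $\hat u = \sqrt{n}\,u_1(X_n)$, $\hat v=\sqrt{p}\,v_1(X_n)$ and $s_n = n^{-1/(2\ell_*)}\sigma_1(X_n)/\sqrt{np}\cdot\sqrt{np}$ of the appropriate order. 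First I would truncate: replace $f$ by $f_K$ (its degree-$K$ orthogonal-polynomial partial sum) and show, exactly as in the proof of Theorem~\ref{thrm1}, that $\|n^{-1/2}f(n^{1-1/(2\ell_*)}X_n+Z_n) - n^{-1/2}f_K(n^{1-1/(2\ell_*)}X_n+Z_n)\|_2 \to 0$ almost surely once $K$ is large, using the $L^2(\mu)$ approximation~(\ref{assumption4.0}), an $\varepsilon$-net / concentration argument for the operator norm of the elementwise remainder matrix, and the higher-order analogue (\ref{assumption4.1}) to control the mean of the remainder. This reduces matters to a polynomial $f_K$, for which all Taylor expansions are finite and exact.

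For polynomial $f_K$, Taylor-expand each entry: $f_K(w_{ij}+z_{ij}) = \sum_{\ell=0}^{K} \frac{1}{\ell!} f_K^{(\ell)}(z_{ij}) w_{ij}^{\ell}$, a finite sum because $w_{ij}\to 0$ makes the tail negligible in operator norm (here $w_{ij}$ denotes the $(i,j)$ entry of $n^{1-1/(2\ell_*)}X_n$). Dividing by $\sqrt n$, the $\ell=0$ term is $n^{-1/2}f_K(Z_n)$, which converges to $n^{-1/2}f(Z_n)$ in operator norm. For $1\le \ell<\ell_*$ I would show the term $n^{-1/2}\frac{1}{\ell!}f_K^{(\ell)}(Z_n)\odot W_n^{(\mathrm{sig}),\odot\ell}$ is asymptotically negligible in operator norm: write $f_K^{(\ell)}(z_{ij}) = \E f_K^{(\ell)}(z_{11}) + (f_K^{(\ell)}(z_{ij}) - \E f_K^{(\ell)}(z_{11}))$; the mean part carries the coefficient $\|f\|_\mu\,\tau_\ell(f,\mu) = \lim_K \int f_K^{(\ell)}d\mu$, which vanishes for $\ell<\ell_*$ by the definition of $\ell_*$ together with assumption~(iv$'$), while the fluctuation part, being a Hadamard product of a bounded-operator-norm noise matrix with the rank-one $W_n^{(\mathrm{sig}),\odot\ell} = (n^{1-1/(2\ell_*)}\sigma_1(X_n))^\ell \hat u^{\odot\ell}(\hat v^{\odot\ell})^\top/(\text{norms})$, has operator norm controlled by $\max_{ij}|w_{ij}|^\ell$ times a $\sqrt{np}$ factor from the rank-one structure, which $\to 0$ by the entrywise decay rate in~(i$'$). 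The $\ell=\ell_*$ term is the one that survives: $n^{-1/2}\frac{1}{\ell_*!} f_K^{(\ell_*)}(Z_n)\odot W_n^{(\mathrm{sig}),\odot \ell_*}$. Replacing $f_K^{(\ell_*)}(z_{ij})$ by its mean $\int f_K^{(\ell_*)}d\mu = \|f\|_\mu\tau_{\ell_*}(f,\mu)$ (the fluctuation again negligible), this term becomes $\|f\|_\mu\tau_{\ell_*}(f,\mu)\,(\ell_*!)^{-1}\,n^{-1/2}(n^{1-1/(2\ell_*)}\sigma_1(X_n))^{\ell_*}\,\hat u^{\odot\ell_*}(\hat v^{\odot\ell_*})^\top/(\|\hat u^{\odot\ell_*}\|\,\|\hat v^{\odot\ell_*}\|)\cdot\|\hat u^{\odot\ell_*}\|\,\|\hat v^{\odot\ell_*}\|$; using $n^{1-1/(2\ell_*)\cdot\ell_*} = n^{\ell_*-1/2}$, the prefactor $n^{-1/2}n^{\ell_*-1/2}$ combines with $\|\hat u^{\odot\ell_*}\| = \sqrt{n\cdot m_{2\ell_*}^u}\,(1+o(1))$ (and similarly for $\hat v$ with $\sqrt{p}$) and $\sqrt p = \sqrt{\gamma n}(1+o(1))$ to yield exactly the constant $\tau_{\ell_*}(f,\mu)\sqrt{m_{2\ell_*}^u m_{2\ell_*}^v}\|f\|_\mu/(\ell_*!\gamma^{(\ell_*-1)/2})\cdot\sigma_1^{\ell_*}(X_n)$ multiplying $\tilde u_1\tilde v_1^\top$, matching $A_{\ell_*,n}$. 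The symmetric case is identical with $\gamma=1$ and $\hat u=\hat v$.

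The main obstacle is the control of the fluctuation matrices $R_n^{(\ell)} \coloneqq (f_K^{(\ell)}(z_{ij}) - \E f_K^{(\ell)}(z_{11}))_{ij}\odot W_n^{(\mathrm{sig}),\odot\ell}$ in operator norm for $\ell\le\ell_*$: because $W_n^{(\mathrm{sig}),\odot\ell}$ has fixed rank one but entries of size $\sim n^{\ell(1-1/(2\ell_*))-\ell/2} = n^{\ell(1/2-1/(2\ell_*))}$ (growing when $\ell>1$), one must show the mean-zero centering genuinely gains a full $\sqrt{np}$ over the naive Frobenius bound. This requires either a conditioning argument (condition on the signal directions, expand $\|R_n^{(\ell)}\|_2 = \sup_{\|a\|=\|b\|=1}a^\top R_n^{(\ell)} b$ over an $\varepsilon$-net, and use Bernstein/Hanson–Wright on the resulting quadratic forms in independent entries of bounded-enough moments) or a moment-method bound on $\tr((R_n^{(\ell)}R_n^{(\ell)\top})^m)$; the polynomial boundedness in (iii$'$) and the moment assumption on $\mu$ in (ii$'$) are what make the requisite moments finite. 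A secondary subtlety is that, unlike in Theorem~\ref{thrm1} where centering of $f_K'(Z_n)$ at a nonzero constant was automatic, here assumption~(iv$'$) is needed precisely to ensure the partial sums $\sum_{k=\ell}^K a_k b_{k\ell}$ (not merely their limit) vanish for $\ell<\ell_*$, so that the truncation-then-expand order of operations does not introduce a spurious lower-order spike; this must be threaded carefully through the $K\to\infty$ limit.
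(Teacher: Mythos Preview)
Your strategy---truncate to $f_K$, Taylor expand, split each order $\ell$ into mean plus fluctuation, use (iv$'$) so that the \emph{finite} partial sum $\sum_{k=\ell}^K a_k b_{k\ell}$ vanishes for $\ell<\ell_*$, and extract the rank-one signal at $\ell=\ell_*$---is exactly the paper's. Where you diverge is in what you flag as the ``main obstacle.'' The fluctuation matrix $R_n^{(\ell)}$ has \emph{independent} mean-zero entries (since $X_n$ is deterministic and $Z_n$ has i.i.d.\ entries), with variances $w_{ij}^{2\ell}\,\mathrm{Var}\,q_k^{(\ell)}(z_{11})\to 0$ uniformly because $\max_{ij}|w_{ij}|\to 0$ under (i$'$), and with uniformly bounded fourth moments. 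The Bai--Yin extension (Lemma~\ref{lem:max_eig}) then gives $n^{-1/2}\|R_n^{(\ell)}\|_2\to 0$ directly, exactly as in (\ref{47}); no $\varepsilon$-net, Hanson--Wright, or trace-moment argument is needed. Your first Hadamard-product heuristic, which produces a bound of order $\max_{ij}|w_{ij}|^\ell\cdot\sqrt{np}$, should be discarded---that quantity diverges for $\ell<2\ell_*$.

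A smaller omission: you never treat $\ell>\ell_*$. After the fluctuation is removed as above, one still needs $n^{-1/2}b_{k\ell}\,\|(n^{1-1/(2\ell_*)}X_n)^{\odot\ell}\|_2\to 0$ for $\ell>\ell_*$. The paper handles this by the explicit rank-one computation $\|(n^{1-1/(2\ell_*)}X_n)^{\odot\ell}\|_2 = n^{(1+\ell-\ell/\ell_*)/2}p^{(1-\ell)/2}\cdot n^{-1/2}\|\sqrt{n}\,u_1\|_{2\ell}^\ell\cdot p^{-1/2}\|\sqrt{p}\,v_1\|_{2\ell}^\ell$, together with the moment limits in (i$'$), which shows the prefactor decays like $n^{(\ell_*-\ell)/(2\ell_*)}$.
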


\noindent  
Introducing the shorthand notation $\tilde \tau_{\ell_*} \coloneqq \tau_{\ell_*}(f,\mu) \sqrt{m_{2\ell_*}^u m_{2\ell_*}^v} / (\ell_*! \gamma^{(\ell_*-1)/2})$, we have by Theorem \ref{thrm2} the following analogs of Corollaries \ref{cor1} and \ref{cor2}: 

\begin{corollary} \label{cor1.1}
In the asymmetric setting, under assumptions (i$\mathrm{'}$)--(iv$\mathrm{'}$), the ESD of $\|f\|_\mu^{-2} Y_n^\top Y_n$  
converges weakly almost surely to the  Marchenko--Pastur law with parameter $\gamma$. 
Furthermore, 
\begin{align}  
\sigma_1^2(Y_n) \xrightarrow[]{a.s.} \lambda(\tilde \tau_{\ell_*} \sigma_1^{\ell_*}(X_n), \gamma) .
\end{align}
The limiting angles between the first singular vectors of $Y_n$ and $\tilde u_1$ and $\tilde v_1$ are given by
\begin{equation}
 \begin{aligned}  
  \langle \tilde u_1, u_1(Y_n) \rangle^2 & \xrightarrow{a.s.}  c_1^2 (\tilde \tau_{\ell_*} \sigma_1^{\ell_*}(X_n), \gamma) , \\   \langle \tilde v_1, v_1(Y_n) \rangle^2 & \xrightarrow{a.s.}  c_2^2 (\tilde \tau_{\ell_*} \sigma_1^{\ell_*}(X_n), \gamma) .   
\end{aligned}      
\end{equation}
\end{corollary}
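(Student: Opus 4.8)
The plan is to derive Corollary~\ref{cor1.1} by combining the operator-norm approximation of Theorem~\ref{thrm2} with the classical rank-one spiked-matrix asymptotics of Lemma~\ref{lem:bgn12}. First I would verify that the entries of $f(Z_n)$ meet the hypotheses of Lemma~\ref{lem:bgn12}: they are i.i.d.\ (as those of $Z_n$ are), have mean $\E f(z_{11}) = \int f\, d\mu = a_0 = 0$ by assumption~(iii$'$), variance $\int f^2\, d\mu = \|f\|_\mu^2$, and finite moments of every order, since $f$ is polynomially bounded and $\mu$ has a finite moment generating function near $0$. Consequently $B_n \coloneqq \|f\|_\mu^{-1} A_{\ell_*,n}$ is a bona fide rank-one spiked matrix with unit-variance i.i.d.\ noise $\|f\|_\mu^{-1} n^{-1/2} f(Z_n)$, signal singular vectors $\tilde u_1, \tilde v_1$, and signal singular value equal to $\tilde\tau_{\ell_*}\sigma_1^{\ell_*}(X_n)$ (this identification is the one piece of constant bookkeeping: the factor $\|f\|_\mu$ cancels, and the moments $m_{2\ell_*}^u, m_{2\ell_*}^v$, the $1/\ell_*!$, and the power $\gamma^{(\ell_*-1)/2}$ appearing in $A_{\ell_*,n}$ reassemble, together with the normalizations $\|(u_1(X_n))^{\odot\ell_*}\|_2$ and $\|(v_1(X_n))^{\odot\ell_*}\|_2$, into $\tilde\tau_{\ell_*}$). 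Applying Lemma~\ref{lem:bgn12} to $B_n$---with the i.i.d.\ version valid by Remark~\ref{rem_bgn1} via the isotropic local Marchenko-Pastur law of \cite{Bloe} (or \cite{BMP07})---yields weak convergence of the ESD of $B_n^\top B_n$ to the Marchenko-Pastur law, $\sigma_1^2(B_n) \xrightarrow{a.s.} \lambda(\tilde\tau_{\ell_*}\sigma_1^{\ell_*}(X_n),\gamma)$, and $\langle\tilde u_1, u_1(B_n)\rangle^2 \xrightarrow{a.s.} c_1^2(\tilde\tau_{\ell_*}\sigma_1^{\ell_*}(X_n),\gamma)$, with the analogous right-vector statement.

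Next I would transfer these statements from $A_{\ell_*,n}$ to $Y_n$ via $\|Y_n - A_{\ell_*,n}\|_2 \xrightarrow{a.s.} 0$ (Theorem~\ref{thrm2}). The operator norms $\|Y_n\|_2$ and $\|A_{\ell_*,n}\|_2$ are a.s.\ bounded (since $\|n^{-1/2} f(Z_n)\|_2 \to \|f\|_\mu(1+\sqrt\gamma)$ a.s.\ by the finite fourth moment, and the signal part has bounded norm), so $\|Y_n^\top Y_n - A_{\ell_*,n}^\top A_{\ell_*,n}\|_2 \le \|Y_n - A_{\ell_*,n}\|_2(\|Y_n\|_2 + \|A_{\ell_*,n}\|_2) \xrightarrow{a.s.} 0$; since by Weyl's inequality the L\'evy distance between the ESDs of two Hermitian matrices is at most the operator norm of their difference, the ESD of $\|f\|_\mu^{-2} Y_n^\top Y_n$ has the same weak limit as that of $B_n^\top B_n$, namely the Marchenko-Pastur law with parameter $\gamma$. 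Weyl's inequality also gives $|\sigma_1(Y_n) - \sigma_1(A_{\ell_*,n})| \le \|Y_n - A_{\ell_*,n}\|_2 \to 0$, hence the stated limit of $\sigma_1^2(Y_n)$.

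For the singular-vector overlaps I would split on the phase transition. In the supercritical regime $\tilde\tau_{\ell_*}\sigma_1^{\ell_*}(X_n) > \gamma^{1/4}$: viewing $A_{\ell_*,n}$ as a rank-one additive perturbation of $N_n \coloneqq n^{-1/2} f(Z_n)$, singular-value interlacing gives $\sigma_2(A_{\ell_*,n}) \le \sigma_1(N_n) \to \|f\|_\mu(1+\sqrt\gamma)$, while $\sigma_1^2(A_{\ell_*,n}) \to \|f\|_\mu^2 \lambda(\tilde\tau_{\ell_*}\sigma_1^{\ell_*}(X_n),\gamma) > \|f\|_\mu^2(1+\sqrt\gamma)^2$, so $A_{\ell_*,n} A_{\ell_*,n}^\top$ has an asymptotically positive gap between its top eigenvalue and the rest; the Davis-Kahan theorem applied to $Y_n Y_n^\top$ versus $A_{\ell_*,n} A_{\ell_*,n}^\top$ then yields $\|u_1(Y_n) u_1(Y_n)^\top - u_1(A_{\ell_*,n}) u_1(A_{\ell_*,n})^\top\|_2 \to 0$, whence $\langle\tilde u_1, u_1(Y_n)\rangle^2 \to \langle\tilde u_1, u_1(A_{\ell_*,n})\rangle^2 \to c_1^2(\tilde\tau_{\ell_*}\sigma_1^{\ell_*}(X_n),\gamma)$; the right singular vector is handled identically through $Y_n^\top Y_n$. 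In the subcritical regime $c_1 = c_2 = 0$, and since the leading singular value now sits at the bulk edge with no gap, the Davis-Kahan transfer fails; instead I would mimic the proof of~(\ref{g28v}) (cf.\ Remark~\ref{rem_bgn3}). For $z > \|f\|_\mu^2(1+\sqrt\gamma)^2$ one shows, using $\|Y_n Y_n^\top - A_{\ell_*,n} A_{\ell_*,n}^\top\|_2 \to 0$, a Sherman-Morrison expansion of the resolvent of the rank-one spike $A_{\ell_*,n}$ about the noise resolvent $(z - N_n N_n^\top)^{-1}$, and the isotropic local Marchenko-Pastur law applied to quadratic forms of $(z - N_n N_n^\top)^{-1}$ against the deterministic unit vectors $\tilde u_1, \tilde v_1$ (and $N_n \tilde v_1$), that $\langle\tilde u_1, (z - Y_n Y_n^\top)^{-1} \tilde u_1\rangle$ converges a.s.\ to a finite $g(z)$ (with $(z-b)g(z) \to 0$ as $z$ decreases to the edge $b = \|f\|_\mu^2(1+\sqrt\gamma)^2$, the denominator in the Sherman-Morrison correction not vanishing there in the subcritical case). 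Combined with $\langle\tilde u_1, (z - Y_n Y_n^\top)^{-1}\tilde u_1\rangle \ge \langle\tilde u_1, u_1(Y_n)\rangle^2 / (z - \sigma_1^2(Y_n))$ and $\sigma_1^2(Y_n) \to b$, this gives $\limsup_n \langle\tilde u_1, u_1(Y_n)\rangle^2 \le (z - b) g(z)$ for each such $z$; letting $z$ decrease to the edge forces $\langle\tilde u_1, u_1(Y_n)\rangle^2 \xrightarrow{a.s.} 0$, and symmetrically $\langle\tilde v_1, v_1(Y_n)\rangle^2 \xrightarrow{a.s.} 0$.

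The hard part will be this subcritical singular-vector estimate: without a spectral gap one cannot simply invoke Lemma~\ref{lem:bgn12} for $A_{\ell_*,n}$ and perturb, and must instead control the resolvent quadratic form $\langle\tilde u_1, (z - Y_n Y_n^\top)^{-1}\tilde u_1\rangle$ uniformly as $z$ approaches the edge, which is exactly where the isotropic local law---rather than mere convergence of the ESD---is genuinely needed. Everything else is routine: the moment and mean checks on $f(Z_n)$, the operator-norm stability of the ESD and of $\sigma_1$, and the constant-matching that identifies the signal strength of $B_n$ as $\tilde\tau_{\ell_*}\sigma_1^{\ell_*}(X_n)$.
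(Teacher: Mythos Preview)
Your proposal is correct and matches the paper's approach, which proves the analogous Corollary~\ref{cor1} in detail (the proof of Corollary~\ref{cor1.1} is then implicit, replacing $A_n$ by $A_{\ell_*,n}$): verify $f(Z_n)$ has i.i.d.\ centered entries with variance $\|f\|_\mu^2$ and finite moments, apply Lemma~\ref{lem:bgn12} to $\|f\|_\mu^{-1}A_{\ell_*,n}$, then transfer to $Y_n$ via Theorem~\ref{thrm2} and Weyl for the ESD and $\sigma_1$, and via Davis--Kahan in the supercritical case. The one place you diverge is the subcritical singular-vector step. The paper (following \cite{MJMY201}) encodes the overlap through $K_n(z)=V_n^\top Y_n^\top Y_n W_n(zI-W_n^\top Y_n^\top Y_n W_n)^{-1}W_n^\top Y_n^\top Y_n V_n$, uses $\|\Delta_n\|_2\to 0$ to replace $K_n$ by a noise-only version $\overline K_n$, computes its limit via Lemma~\ref{lema5}, and shows $\partial_z K_n(\sigma_1^2(Y_n))$ diverges at the edge; the identity $v_1(Y_n)^\top V_n(I+\partial_z K_n)V_n^\top v_1(Y_n)=1$ then forces the overlap to zero. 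You instead bound the overlap by $(z-b)\langle\tilde u_1,(z-Y_nY_n^\top)^{-1}\tilde u_1\rangle$, expand the resolvent of $A_{\ell_*,n}A_{\ell_*,n}^\top$ around that of $N_nN_n^\top$ via Woodbury, and let $z\downarrow b$. Both packagings lean on the same two ingredients---the operator-norm transfer and the isotropic local law---so the difference is cosmetic; your version is closer to the original \cite{BGN12} argument, while the paper's $K_n$-derivative formulation handles the supercritical and subcritical cases in a single framework.
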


\begin{corollary} \label{cor2.1}
  In the symmetric setting, under assumptions (i$\mathrm{'}$)--(iv$\mathrm{'}$), the ESD of $\|f\|_\mu^{-1} Y_n$  
converges weakly almost surely to the  semicircle law. 
Furthermore, assuming $\tilde \tau_{\ell_*} > 0$ and $r_+ = 1$ for simplicity,
\begin{equation}
    \begin{aligned}
      \lambda_1(Y_n) & \xrightarrow{a.s.} \bar \lambda(\tilde \tau_{\ell_*} \lambda_1^{\ell_*}(X_n)) .
\end{aligned}
\end{equation}
The limiting angle between the first eigenvector of $Y_n$ and $\tilde v_1$ is given by
\begin{align}
    \langle \tilde v_1, v_1(Y_n) \rangle^2 \xrightarrow{a.s.}  \bar c^2(\tilde \tau_{\ell_*} \lambda_1^{\ell_*}(X_n)). 
\end{align}
\end{corollary}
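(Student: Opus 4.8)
The plan is to deduce the corollary from Theorem~\ref{thrm2} together with the symmetric spiked‑matrix theory recalled in Section~\ref{sec1} and two elementary stability facts, Weyl's inequality and the Davis--Kahan $\sin\Theta$ theorem. First I would specialise the approximating matrix $A_{\ell_*,n}$ to the symmetric setting. There $\gamma=1$ and $u_1(X_n)=v_1(X_n)$, so $\tilde u_1=\tilde v_1$ and $m^u_{2\ell_*}=m^v_{2\ell_*}$; since $X_n$ is rank one with $r_+=1$ we have $\sigma_1(X_n)=\lambda_1(X_n)>0$. Hence
\[
A_{\ell_*,n}=\tilde\tau_{\ell_*}\,\|f\|_\mu\,\lambda_1^{\ell_*}(X_n)\,\tilde v_1\tilde v_1^{\top}+\frac{1}{\sqrt n}f(Z_n),
\]
a rank‑one symmetric deformation, along the fixed unit vector $\tilde v_1$, of the generalized Wigner matrix $n^{-1/2}f(Z_n)$. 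Because $a_0=0$ and $\mu$ has a finite moment generating function near the origin (assumption (ii$'$)), the upper‑triangular entries of $\|f\|_\mu^{-1}f(Z_n)$ are i.i.d., mean zero, variance one, with all moments finite, so $\|f\|_\mu^{-1}A_{\ell_*,n}$ is a spiked Wigner matrix with signal eigenvalue $\tilde\tau_{\ell_*}\lambda_1^{\ell_*}(X_n)$ and signal eigenvector $\tilde v_1$.

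For the ESD I would argue as follows. Theorem~\ref{thrm2} gives $\|Y_n-A_{\ell_*,n}\|_2\xrightarrow{a.s.}0$, so by Weyl's inequality $\max_i|\lambda_i(Y_n)-\lambda_i(A_{\ell_*,n})|\to 0$ and the empirical spectral distributions of $\|f\|_\mu^{-1}Y_n$ and $\|f\|_\mu^{-1}A_{\ell_*,n}$ have the same almost sure weak limit. Deleting the rank‑one term perturbs the ESD by at most $1/n$ in Kolmogorov distance (the rank inequality), and the ESD of $n^{-1/2}\|f\|_\mu^{-1}f(Z_n)$ converges almost surely to the standard semicircle law by Wigner's theorem; chaining these gives the first assertion.

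For the outlier eigenvalue and eigenvector I would apply the symmetric analogue of Lemma~\ref{lem:bgn12} stated in Section~\ref{sec1} to $\|f\|_\mu^{-1}A_{\ell_*,n}$: since $\tilde\tau_{\ell_*}>0$, this yields $\lambda_1(\|f\|_\mu^{-1}A_{\ell_*,n})\xrightarrow{a.s.}\bar\lambda(\tilde\tau_{\ell_*}\lambda_1^{\ell_*}(X_n))$ and $\langle\tilde v_1,v_1(\|f\|_\mu^{-1}A_{\ell_*,n})\rangle^2\xrightarrow{a.s.}\bar c^2(\tilde\tau_{\ell_*}\lambda_1^{\ell_*}(X_n))$; the hypothesis that the signal eigenvector be Haar is unnecessary here, exactly as in Remark~\ref{rem_bgn1}, since $Z_n$ has i.i.d.\ entries and one may invoke the isotropic local semicircle law (Theorem~2.12 of \cite{Bloe}). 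To transfer to $Y_n$: Weyl's inequality and $\|Y_n-A_{\ell_*,n}\|_2\to 0$ give $\lambda_1(Y_n)\xrightarrow{a.s.}\bar\lambda(\tilde\tau_{\ell_*}\lambda_1^{\ell_*}(X_n))$ after the rescaling by $\|f\|_\mu$. When $\tilde\tau_{\ell_*}\lambda_1^{\ell_*}(X_n)>1$ the outlier $\lambda_1(A_{\ell_*,n})$ lies a fixed distance above the bulk edge $2\|f\|_\mu$, so the Davis--Kahan theorem, again using $\|Y_n-A_{\ell_*,n}\|_2\to 0$, yields $\langle v_1(Y_n),v_1(A_{\ell_*,n})\rangle^2\to 1$ up to a sign, hence $\langle\tilde v_1,v_1(Y_n)\rangle^2\xrightarrow{a.s.}\bar c^2(\tilde\tau_{\ell_*}\lambda_1^{\ell_*}(X_n))$. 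When $\tilde\tau_{\ell_*}\lambda_1^{\ell_*}(X_n)\le 1$, $\bar c\equiv 0$, and $\langle\tilde v_1,v_1(Y_n)\rangle^2\to 0$ follows, as for model~(\ref{-1}) in Remark~\ref{rem_bgn3}, from the absence of outliers in $\|f\|_\mu^{-1}A_{\ell_*,n}$ and delocalisation of its top eigenvector against the fixed direction $\tilde v_1$, combined with the operator‑norm approximation.

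The main obstacle is not in this corollary --- with Theorem~\ref{thrm2} granted, everything above is bookkeeping plus standard perturbation theory --- but in verifying that the symmetric specialisation of $A_{\ell_*,n}$ is genuinely a rank‑one spiked Wigner matrix with exactly the parameters $\tilde\tau_{\ell_*}\lambda_1^{\ell_*}(X_n)$ and $\tilde v_1$ (this is where $\gamma=1$ and $u_1(X_n)=v_1(X_n)$ collapse the constant appearing in $A_{\ell_*,n}$), and in checking that the results of \cite{BGN11, BS_SpAn} apply with the \emph{deterministic} signal eigenvector $\tilde v_1$ in place of a Haar vector, which is precisely the content of the remark following the symmetric analogue of Lemma~\ref{lem:bgn12}. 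With the spectral gap supplied by the supercriticality hypothesis, the Weyl/Davis--Kahan step is then routine.
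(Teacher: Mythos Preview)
Your overall strategy---specialise $A_{\ell_*,n}$ to the symmetric setting, invoke the spiked Wigner theory of \cite{BGN11,BS_SpAn,Bloe} on $A_{\ell_*,n}$, then transfer to $Y_n$ via Theorem~\ref{thrm2} and Weyl/Davis--Kahan---is exactly the paper's approach, and the ESD and supercritical eigenvalue/eigenvector claims go through as you describe.

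There is one genuine gap, in the subcritical case $\tilde\tau_{\ell_*}\lambda_1^{\ell_*}(X_n)\le 1$. You write that $\langle\tilde v_1,v_1(Y_n)\rangle^2\to 0$ follows from delocalisation of $v_1(A_{\ell_*,n})$ against $\tilde v_1$ ``combined with the operator-norm approximation.'' But operator-norm convergence $\|Y_n-A_{\ell_*,n}\|_2\to 0$ does \emph{not} imply $v_1(Y_n)$ is close to $v_1(A_{\ell_*,n})$: in the subcritical regime $\lambda_1(A_{\ell_*,n})$ converges to the bulk edge $2\|f\|_\mu$, so there is no spectral gap and Davis--Kahan gives nothing. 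Knowing $\langle\tilde v_1,v_1(A_{\ell_*,n})\rangle\to 0$ therefore says nothing directly about $\langle\tilde v_1,v_1(Y_n)\rangle$.

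The paper handles this (in the asymmetric proof of (\ref{g28v}), to which Remark~\ref{rem_bgn3} points) not by transferring from $A_n$ but by working directly with $Y_n$: one forms the matrix $K_n(z)$ from $Y_n$, uses the operator-norm bound on $\Delta_n=Y_n-A_n$ to show $K_n(z)-\overline K_n(z)\to 0$ for $z\in\mathbb{C}^+$, and then argues via the Stieltjes transform that $\sigma_r(\partial_z K_n(\sigma_i^2(Y_n)))\to\infty$ at the edge, forcing $\|V_n^\top v_i(Y_n)\|_2\to 0$. The symmetric analogue (with the resolvent of $Y_n$ and the semicircle Stieltjes transform in place of $m_\gamma$) is what is needed here. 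So your reference to Remark~\ref{rem_bgn3} is the right pointer, but the mechanism is not ``delocalisation for $A_{\ell_*,n}$ plus operator-norm transfer''; it is a direct resolvent argument for $Y_n$ in which the operator-norm bound enters only to control $K_n-\overline K_n$.
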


For certain nonlinear transformations and noise distributions, Theorem \ref{thrm2} shows that PCA applied to $n^{-1/2} f(\sqrt{n} X_n + Z_n)$ fails. The critical scaling of $X_n$ under which a phase transition occurs is  $n^{1-1/(2\ell_*)}$, and the transition point depends on $f$, $\omega$,  and $\gamma$ (as in Theorem \ref{thrm1}) and the empirical moments of $u_1(X_n)$ and $v_1(X_n)$. Above this transition, the singular vectors of $Y_n = n^{-1/2} f(n^{1-1/(2\ell_*)} X_n + Z_n)$ contain information about $X_n$, while below the transition, the output of PCA is asymptotically orthogonal  to the signal.

\begin{remark} \label{rem26}
    Although we assume $X_n$ is deterministic, all results naturally generalize (by conditioning on $X_n$) to the case of random $X_n$ independent of $Z_n$. 
    Assumption (i$\mathrm{'}$) is satisfied if the elements of $\sqrt{n} u_i(X_n)$ 
    (and those of $\sqrt{p} v_i(X_n)$) are i.i.d.\ variables with variance one and finite moments, in which case $n^{-1} \|\sqrt{n} u_i(X_n)\|_k^k$ converges to the $k$-th moment, almost surely. Although $u_i(X_n)$ is only asymptotically unit norm, as $X_n$ is fixed rank, this effect is negligible.  
\end{remark}

\begin{remark} \label{rem4} Assumption (i$\mathrm{'}$) implies (i), assumption (ii$\mathrm{''}$) and Lemma \ref{lem:ortho2} imply (\ref{assumption4.0}), and assumptions (ii$\mathrm{'}$) and (iii$\mathrm{'}$) imply (\ref{assumption4.1}) (see Lemma \ref{lem:assumption1}). 
\end{remark}

\begin{remark} \label{:(} Assumption (iv$\mathrm{'}$) is admittedly restrictive. Nevertheless, two important cases are covered in which the sum $\sum_{k=\ell}^\infty a_k b_{k \ell}$ contains a finite number of non-zero terms for $\ell < \ell_*$.  First, polynomial transformations are included: if $f$ is a degree $m$ polynomial, $a_k = 0$ for $k > m$.  Second, assumption (iv$\mathrm{'})$ holds if there exists $m \in \mathbb{N}$ and coefficients $(\alpha_{kj} : k \in \mathbb{N}, k-m \leq j \leq k-1)$ such that
\begin{align}
q_k'(z) = \sum_{j=k-m}^{k-1} \alpha_{kj} q_j(z) ,  \label{:'(}
\end{align}
for $k \geq m$. This implies $b_{k\ell} = 0$ for $k$ sufficiently large and each $\ell < \ell_*$. The Hermite polynomials (corresponding to the Gaussian distribution) satisfy such a recurrence with $m=1$  (Remark \ref{rem3}). A characterization of distributions such that the corresponding orthogonal polynomials satisfy (\ref{:'(}) with $m=2$ is given in \cite{Bonan}.

At the point in the proof of Theorem \ref{thrm2} where assumption (iv$\mathrm{'}$) is used, we briefly discuss a more detailed method of analysis that could eliminate this technical condition. 

\end{remark}

\begin{remark} \label{rem210}
    For $\ell_* = 1$, Theorem \ref{thrm2} reduces (as expected) to a rank-one specialization of Theorem \ref{thrm1}; $u_1(X_n)$ and $v_1(X_n)$ are unit norm, so $m_{2\ell_*}^u = m_{2\ell_*}^v = 1$ and $\tilde \tau_{\ell_*} = \tau(f,\mu)$.
\end{remark}




\section{Applications} \label{sec:ex}


\noindent \textbf{Binomial data.} We consider binomial data with latent low-rank structure:
\begin{align} \label{d3x}  y_{ij} \sim \text{Bin}(m, \text{logistic}(x_{ij})),   
\end{align}
where $\text{logistic}(x) \coloneqq (1+e^{-x})^{-1}$. 
As noted in Section \ref{sec:moti}, (\ref{d3x}) is representable  as a transformed spiked matrix: if $z_{ij}$ is logistically distributed, 
\begin{align} \label{plmokn2}
{\bf 1}(-x_{ij} + z_{ij} \leq 0) \sim \text{Ber}(\text{logistic}(x_{ij})) 
\end{align}
(summing $m$ i.i.d.\ copies of (\ref{plmokn2}) and mean-centering yields (\ref{d3x})).

\begin{corollary} \label{thrm_bin}
    Let $Y_n$ have elements distributed according to 
    \begin{align} \sqrt{n} y_{ij} \sim \mathrm{Bin}(m, \mathrm{logistic}(\sqrt{n/m} \, x_{ij})) - \frac{m}{2}  \label{f4q} \end{align}
    and the elements of $ \sqrt{n/m} \, X_n$ uniformly converge to zero. In the asymmetric setting, the ESD of $4m^{-1} Y_n^\top Y_n$ converges weakly almost surely to the Marchenko--Pastur law with parameter $\gamma$. The limiting angles between the singular vectors of $X_n$ and $Y_n$ are 
\begin{equation}
 \begin{aligned}  \label{g-1v} \hspace{3.7cm}
    \langle u_i(X_n), u_j(Y_n) \rangle^2 & \xrightarrow{a.s.} \delta_{ij} \cdot c_1^2 (\sigma_i(X_n)/2, \gamma) , \\  \langle v_i(X_n), v_j(Y_n) \rangle^2 & \xrightarrow{a.s.} \delta_{ij} \cdot c_2^2 (\sigma_i(X_n)/2, \gamma) ,  
\end{aligned}   \hspace{1.5cm} 1\leq i,j \leq r .    
\end{equation}
   
\end{corollary}

\begin{remark}
    \label{rem:bin} The scaling factors in Corollary \ref{thrm_bin} are for consistency with Theorem \ref{thrm1} and Corollary \ref{cor1}. Under model (\ref{d3x}), assuming the elements of $X_n$ converge uniformly to zero, the recovery threshold of PCA is $2  \sqrt{n/m} \gamma^{1/4}$:
\begin{align*}
 \liminf_{n \rightarrow \infty} \, \langle u_i(X_n), u_{i+1}(Y_n) \rangle^2 > 0 \quad \text{ if and only if } \quad  \liminf_{n \rightarrow \infty} \sqrt{\frac{m}{n}} \sigma_i(X_n) > 2 \gamma^{1/4} ,  \quad \quad 1 \leq i \leq r,
\end{align*}
with an identical threshold for right singular vectors. Here, we estimate $u_i(X_n)$ by $u_{i+1}(Y_n)$ as $Y_n$ is non-centered and its leading eigenvalue is non-informative (see Remark \ref{rem1}). The number of trials $m$ may be fixed ($m = 1, 2$ are particular cases of interest) or increase with $n$.  In Figure \ref{fig1}, we simulate binomial data from (\ref{d3x}) with $m = 2$ and $m= \lfloor \sqrt{n} \rfloor$; empirical cosine similarities agree closely with theory. 
\end{remark}

We note that under (\ref{d3x}), the degree of heteroskedasticity in $Y_n$ is slight; success probabilities uniformly converge to one-half (or to $\alpha \in (0,1)$, by taking the noise to have non-zero mean). Although this assumption is simple, Corollary \ref{thrm_bin} shows it induces a novel phase transition. Increasing the level of heteroskedasticity, (1) our proof method breaks down and precise asymptotic results may no longer be attainable, and (2) estimation of $\text{rank}(X_n)$ now poses a challenge: the ESD of $m^{-1}Y_n^\top Y_n$ converges to an unknown, signal-dependent distribution 
 rather than the Marchenko--Pastur law.\footnote{Under convergence of the ESD to the Marchenko--Pastur law, the number of supercritical singular values of $X_n$ is consistently estimated by the number of eigenvalues of $4m^{-1} Y_n^\top Y_n$ exceeding $(1+\sqrt{\gamma})^2+n^{-2/3+\varepsilon}$, where $\varepsilon \in (0, 2/3)$.} This setting is studied in \cite{landa}, which develops a new whitening procedure for estimation of $\text{rank}(X_n)$ within a similar model.

\begin{figure}[]
\centering
\includegraphics[height=2.425in]{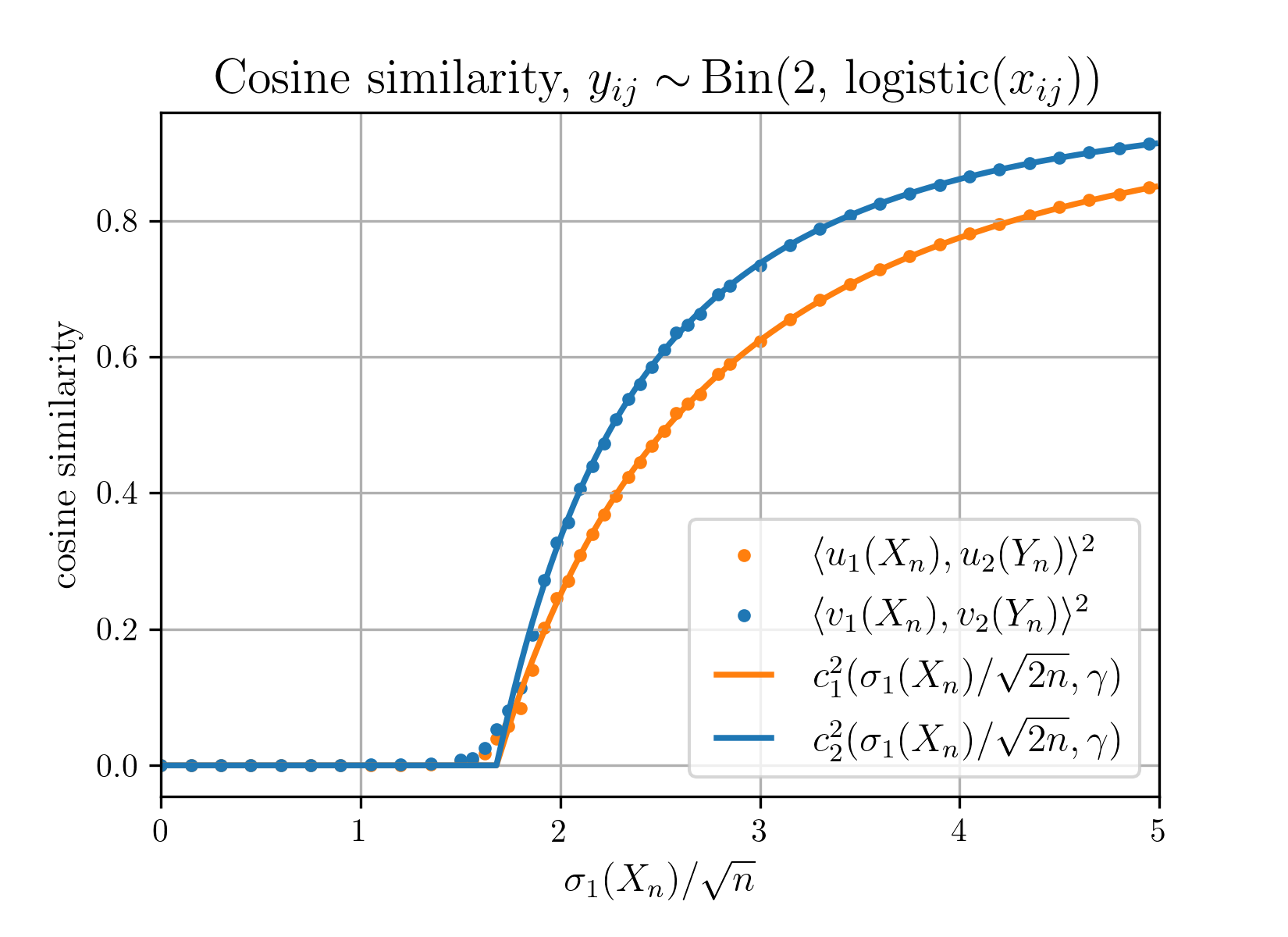}\hspace{-.6cm}
\includegraphics[height=2.425in]{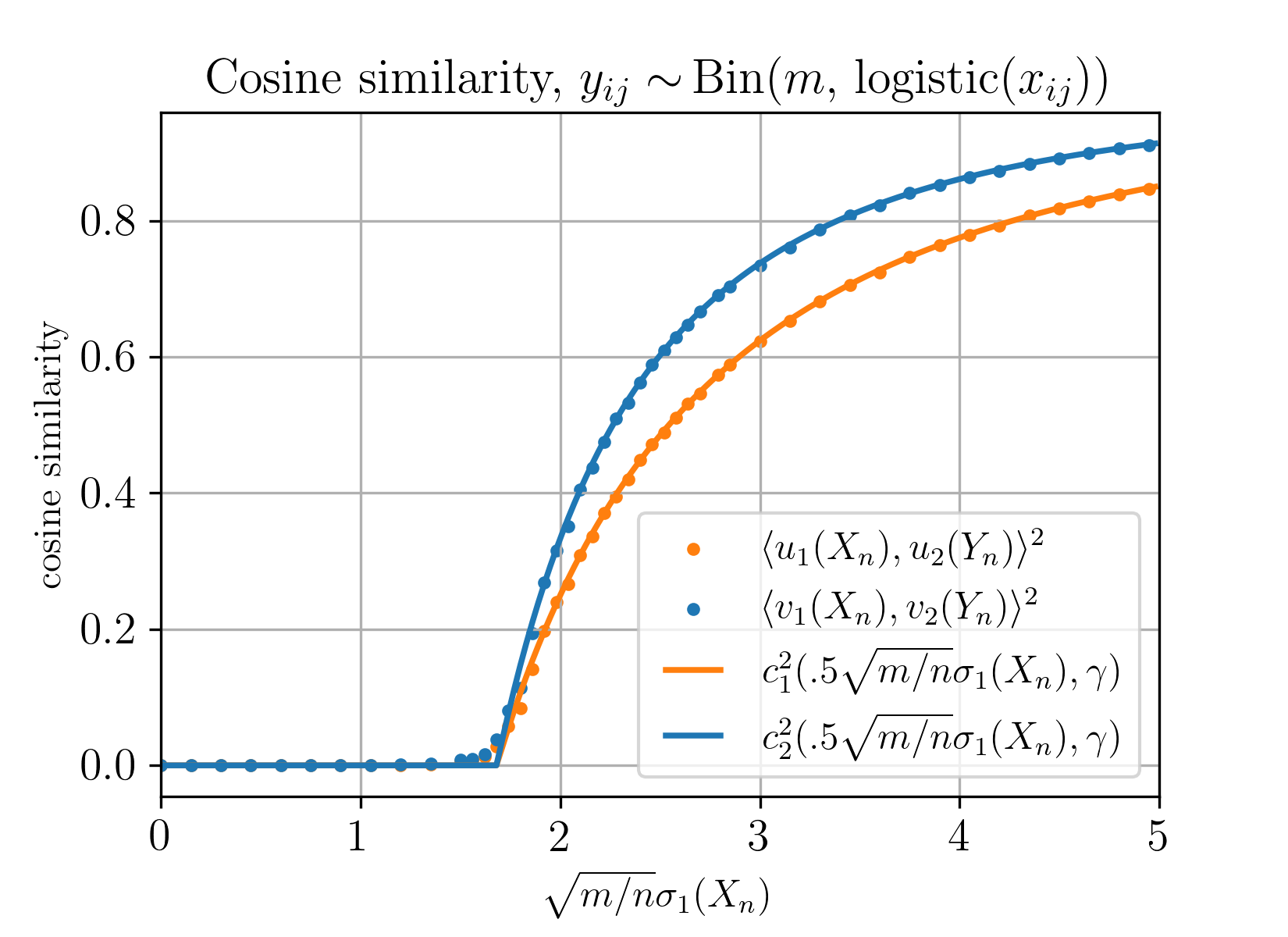}
\caption{ Cosine similarities between the singular vectors of $X_n$ and $Y_n$ under (\ref{d3x}), with $n = 5000$, $p = 2500$, $\gamma = 1/2$, and $m = 2$ (left) or $m = \lfloor \sqrt{n} \rfloor$ (right). The singular vectors of $X_n$ were generated uniformly on the unit sphere. There is close agreement between theory (solid lines) and simulations (points, each representing the average  25 simulations).}
\label{fig1}
\end{figure}

\vspace{.1in}
\noindent {\bf Optimal elementwise preprocessing.} If $X_n + n^{-1/2} Z_n$ is observed directly, suitable elementwise preprocessing improves the performance of PCA: 


\begin{corollary} \label{cor3}
Let assumptions (i)--(ii) hold, $\E z_{11} = 0$, $\textnormal{Var}(z_{11}) = 1$, and $\sum_{k=1}^\infty b_{k}^2 > 1$. 
Application of $f_K^*(z) \coloneqq \sum_{k=1}^K b_1 q_k(z)$ to $\sqrt{n}X_n + Z_n$ reduces the recovery threshold of PCA from $\gamma^{1/4}$ to \[ \gamma^{1/4} \tau(f_K^*, \mu)^{-1} = \gamma^{1/4}\bigg(\sum_{k=1}^K b_k^2 \bigg)^{-1/2}.\] 

Moreover, if $\mu$ has differentiable density $\omega$,  $\textnormal{supp}(\omega) = \mathbb{R}$,  and $\omega'/\omega$ is polynomially bounded, $f_K^*(z) \rightarrow f^*(z) \coloneqq -\omega'(z)/\omega(z)$, where the convergence is in $L^2(\mathbb{R},\mu)$. In this case, $f^*$ maximizes $\tau(f,\mu)$ and  
\begin{align} \label{qwerty6} 
\tau^2(f^*, \mu) = 
 \sum_{k=1}^\infty b_k^2 = \mathcal{I}(\omega) \coloneqq \int_{-\infty}^\infty  \frac{\omega'(z)^2}{\omega(z)} dz   \geq 1, 
\end{align}
where $\mathcal{I}(\omega)$ is the Fisher information under translation. The inequality is strict if and only if $z_{11}$ is not Gaussian.    
Transformation by $f^*$ reduces the recovery threshold of PCA from $\gamma^{1/4}$ to $ \gamma^{1/4} \tau(f^*, \mu)^{-1} = \gamma^{1/4} \mathcal{I}(\omega)^{-1/2}$.
\end{corollary}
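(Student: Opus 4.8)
The plan is to reduce everything to Corollary \ref{cor1} together with the scale-invariance of $\tau$ and two elementary Hilbert-space identities. First I would record that under $\E z_{11} = 0$ and $\mathrm{Var}(z_{11}) = 1$ the leading orthonormal polynomials are $q_0 \equiv 1$ and $q_1(z) = z$, so that $b_1 = \langle q_1', 1\rangle_\mu = \langle 1,1\rangle_\mu = 1$; applied to $f = \mathrm{id} = q_1$ this gives $\tau(\mathrm{id},\mu) = a_1 b_1 = 1$, and since $n^{-1/2}\,\mathrm{id}(\sqrt n X_n + Z_n) = X_n + n^{-1/2}Z_n$, Corollary \ref{cor1} (or Lemma \ref{lem:bgn12}) recovers the classical threshold $\gamma^{1/4}$ as the baseline.

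For the preprocessor $f_K^\ast = \sum_{k=1}^K b_k q_k$: this is a polynomial with no constant term, so assumptions (iii) and (iv) hold automatically (the orthogonal expansion terminates), and Theorem \ref{thrm1}/Corollary \ref{cor1} apply to $Y_n = n^{-1/2} f_K^\ast(\sqrt n X_n + Z_n)$. By orthonormality its coefficients are $a_k = b_k$ for $k \le K$ and $0$ otherwise, hence $\|f_K^\ast\|_\mu^2 = \sum_{k=1}^K b_k^2$ and $\sum_k a_k b_k = \sum_{k=1}^K b_k^2$, so $\tau(f_K^\ast,\mu) = \bigl(\sum_{k=1}^K b_k^2\bigr)^{1/2}$. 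Corollary \ref{cor1} then makes $\langle u_i(X_n), u_i(Y_n)\rangle^2$ and $\langle v_i(X_n), v_i(Y_n)\rangle^2$ have positive limits exactly when $\tau(f_K^\ast,\mu)\,\sigma_i(X_n) > \gamma^{1/4}$, i.e.\ the recovery threshold is $\gamma^{1/4}\tau(f_K^\ast,\mu)^{-1} = \gamma^{1/4}\bigl(\sum_{k=1}^K b_k^2\bigr)^{-1/2}$, which is strictly below $\gamma^{1/4}$ once $\sum_{k=1}^K b_k^2 > 1$ — true for all large $K$ under the hypothesis $\sum_{k=1}^\infty b_k^2 > 1$.

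For optimality, $\tau(cf,\mu) = \mathrm{sgn}(c)\,\tau(f,\mu)$, so it suffices to maximize $|\tau(f,\mu)| = |\sum_k a_k b_k|/\|f\|_\mu$; by Parseval (under (\ref{assumption4.0})) $\|f\|_\mu^2 = \sum_k a_k^2$. The key identity is that under the density hypotheses the score $f^\ast = \omega'/\omega$ is polynomially bounded and $\mu$ has all moments, so $f^\ast \in L^2(\mathbb R,\mu)$, and integrating by parts,
\[ \langle f^\ast, q_k\rangle_\mu = \int \omega'(z) q_k(z)\, dz = -\int \omega(z) q_k'(z)\, dz = -b_k, \qquad \langle f^\ast, 1\rangle_\mu = \int \omega'(z)\, dz = 0. \]
Since the polynomials are dense in $L^2(\mathbb R,\mu)$ (finite m.g.f., Lemma \ref{lem:ortho2}), Parseval gives $\mathcal I(\omega) = \|f^\ast\|_\mu^2 = \sum_{k\ge1} b_k^2 < \infty$; Cauchy–Schwarz on the sequences $(a_k),(b_k)$ then yields $|\tau(f,\mu)| \le \bigl(\sum_k b_k^2\bigr)^{1/2} = \mathcal I(\omega)^{1/2}$, with equality iff $(a_k)\propto(b_k)$, i.e.\ $f$ is proportional to $\sum_{k\ge1} b_k q_k = -f^\ast$. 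Because $\lambda, c_1, c_2$ depend on $\tau$ only through $\tau^2$, the sign and scale are irrelevant, so $f^\ast = \omega'/\omega$ is optimal with $\tau^2(f^\ast,\mu) = \mathcal I(\omega)$ and threshold $\gamma^{1/4}\mathcal I(\omega)^{-1/2}$; admissibility of $f^\ast$ in Theorem \ref{thrm1} is exactly Remark \ref{rem1h23}. Finally $\mathcal I(\omega) = \sum_{k\ge1} b_k^2 \ge b_1^2 = 1$, with equality iff $b_k = 0$ for all $k\ge 2$, i.e.\ $f^\ast = -q_1(z) = -z$ in $L^2(\mu)$, which by continuity forces $\omega(z) = (2\pi)^{-1/2}e^{-z^2/2}$.

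The step I expect to be the main obstacle is justifying the integration by parts — that the boundary terms $[q_k(z)\omega(z)]_{-\infty}^{\infty}$ vanish — which requires extracting enough decay of $\omega$ from the m.g.f./support hypotheses; one can instead sidestep it by invoking Lemma \ref{lem:tau_f'} to write $\tau(f^\ast,\mu) = \|f^\ast\|_\mu^{-1}\E (f^\ast)'(z_{11})$ and computing $\E (f^\ast)'(z_{11}) = \int (\omega'/\omega)'\,\omega = \int \omega'' - \int (\omega')^2/\omega = -\mathcal I(\omega)$ directly. Everything else is routine Hilbert-space manipulation plus bookkeeping of the assumptions of Theorem \ref{thrm1}.
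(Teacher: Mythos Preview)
Your argument is correct and follows the same core route as the paper's: both compute the orthogonal-polynomial coefficients of $f^\ast=\omega'/\omega$ via integration by parts and then read off $\tau(f^\ast,\mu)$. Your treatment is in fact more complete than the paper's. The paper declares the $f_K^\ast$ portion ``immediate'' and, for the second portion, only computes $\tau(f^\ast,\mu)=\|f^\ast\|_\mu$ and then cites \cite{Perry} for $\mathcal I(\omega)\ge 1$ with equality iff Gaussian; it never explicitly writes down the Cauchy--Schwarz step proving that $f^\ast$ maximizes $|\tau(\,\cdot\,,\mu)|$ over all $f$. You supply both of these missing pieces: the explicit Cauchy--Schwarz bound $|\tau(f,\mu)|\le(\sum_k b_k^2)^{1/2}$ with equality characterization, and the self-contained observation that $\sum_k b_k^2\ge b_1^2=1$ forces $\omega'/\omega=-z$ in the equality case. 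You also correctly track the sign in the integration by parts (the paper's displayed line $a_k=b_k$ drops a minus sign), and rightly note this is immaterial since the threshold depends only on $\tau^2$. Your remark about the boundary terms is the one place you are over-cautious: since $\mu$ has all moments (assumption (ii)) and $\omega$ is continuous, $z^k\omega(z)\to 0$ at infinity, so $[q_k\omega]_{-\infty}^{\infty}=0$ is available directly---the paper uses this same fact without comment in the proof of Lemma~\ref{lem:tau2}.
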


\noindent This corollary extends results of \cite{Perry} and \cite{Mont}, which assume $\mu$ has differentiable density $\omega$ and study the optimal transformation $f^*$.\footnote{\cite{Mont} considers kernel density estimation of $\omega$, which is not assumed known as in this work.} Used together with Corollaries \ref{cor1} and \ref{cor2}, Corollary \ref{cor3} yields the limiting cosine similarities between the singular vectors of $X_n$ and $f_K^*(\sqrt{n}X_n+Z_n)$ or  $f^*(\sqrt{n}X_n+Z_n)$ (\cite{Perry} provides only a lower bound on the limiting cosine similarity in the symmetric setting). Our results also offer a new perspective on $f^*$, as the (almost-everywhere) unique  maximizer of $\tau(f,\mu)$. 

\begin{example} \label{ex3} Let $\phi(z) \coloneqq (2 \pi)^{-1/2} \exp(-z^2/2)$ denote the standard Gaussian density. Suppose we observe data $X_n + n^{-1/2} Z_n$,  where the elements of $Z_n$ have a bimodal distribution with density 
\[
\omega(z) = \frac{1}{2} \big( \phi(2(z-1)) + \phi(2(z+1)) \big). 
\]    
Applying the optimal elementwise-transformation $f(z) = -\omega'(z)/\omega(z)$ of Corollary \ref{cor3} reduces the recovery threshold of PCA from $1$ to $\mathcal{I}(\omega)^{-1/2}$, with 
\begin{align}
    \mathcal{I}(\omega) =\int_{-\infty}^\infty \frac{\omega'(z)^2}{\omega(z)} dz  \approx 2.902.
\end{align}
By Corollary \ref{cor2},
\begin{equation}
\begin{aligned} &
    \langle u_1(X_n), u_1(Y_n)  \rangle^2 \xrightarrow{a.s.} \bar c^2\big(\sqrt{\mathcal{I}(\omega)} \sigma_1(X_n)\big)  .
\end{aligned}  
\end{equation}
In Figure \ref{fig2}, we see that transforming the data by $f^*$ significantly improves the performance of PCA.  
\end{example}

\begin{figure}[]
\centering
\includegraphics[height=2.4in]{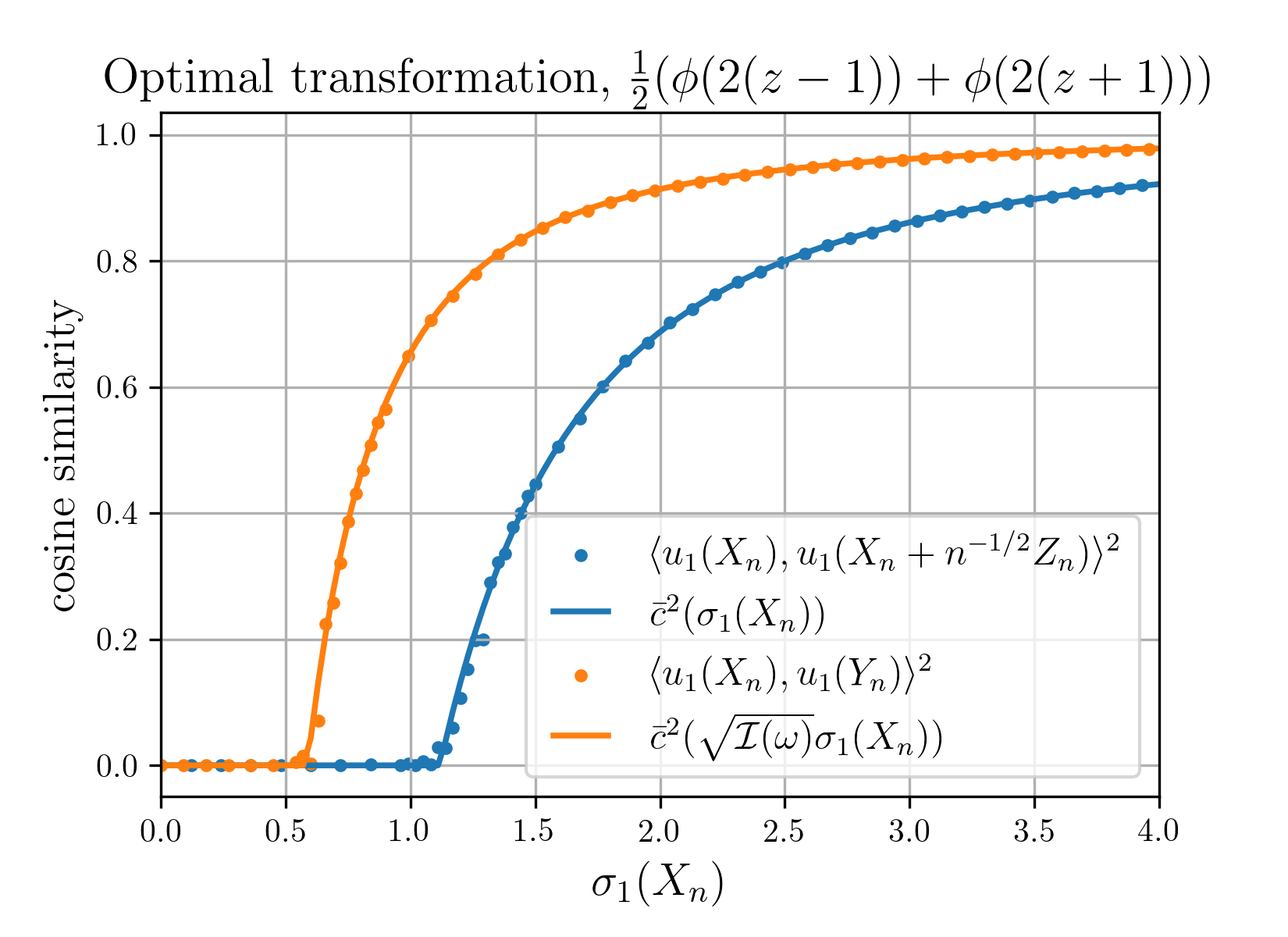} \hspace{-.6cm}
\includegraphics[height=2.4in]{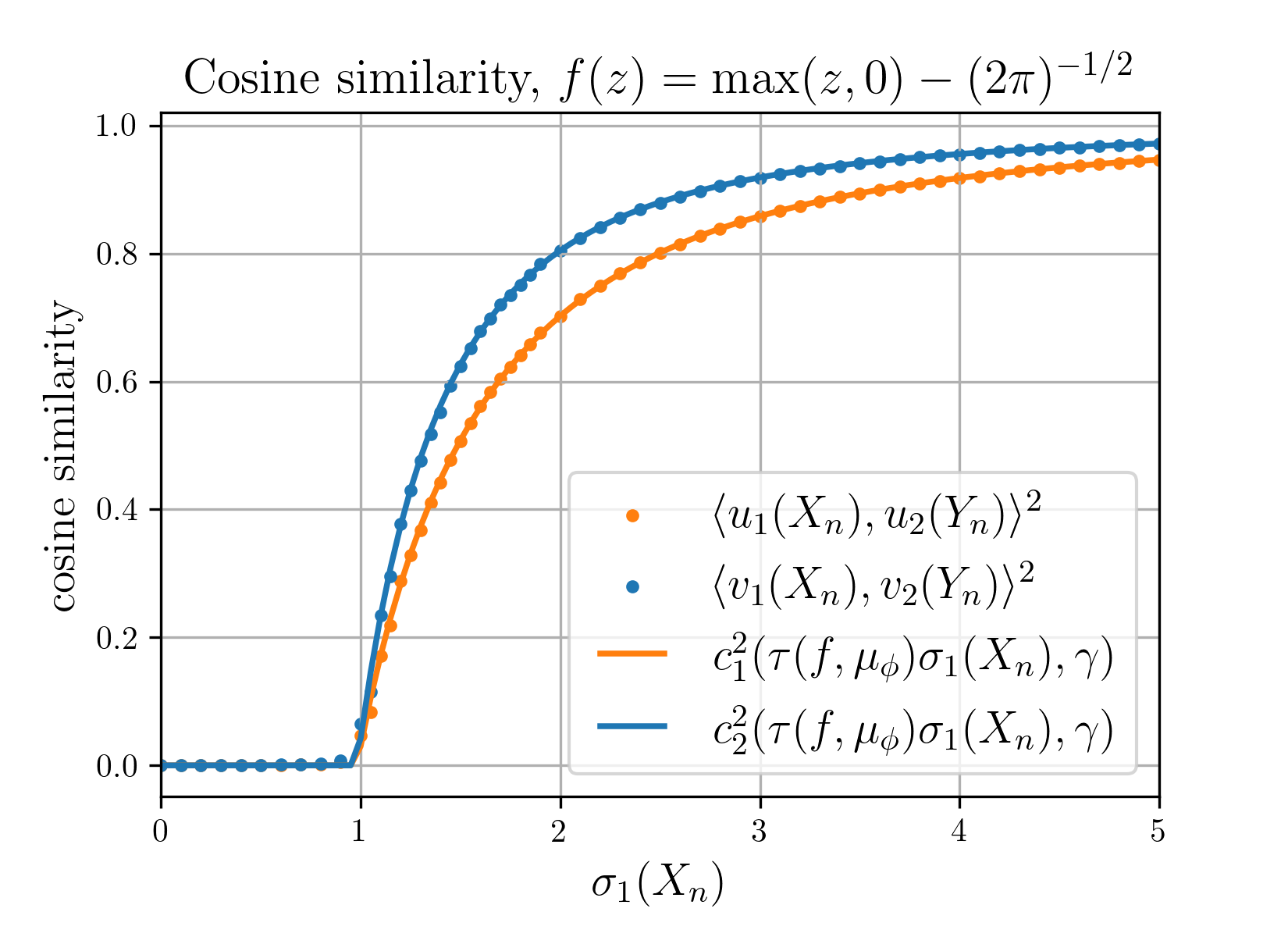}
\caption{ {\bf Left}: Cosine similarities in the symmetric setting between the eigenvectors of $X_n$ and $X_n + n^{-1/2} Z_n$ (blue) and $X_n$ and $Y_n$ (orange). The elements of $Z_n$ have a bimodal distribution, $n = 5000$, and $Y_n = n^{-1/2} f^*(X_n + n^{-1/2} Z_n)$ where $f^*$ is the transformation introduced in Corollary \ref{cor3}. Application of $f^*$ reduces the recovery threshold of PCA from $1$ to roughly $.587$. \\
{\bf Right}: Cosine similarities between the singular vectors of $X_n$ and $Y_n$ with $f(z) = \max(z,0) - (2\pi)^{-1/2}$, the ReLU function, $n = 5000$, $p = 2500$, and $\gamma = 1/2$. This transformation increases the recovery threshold of PCA from $\gamma^{1/4} \approx .841$ to $\gamma^{1/4} \tau(f,\mu_\phi) \approx .982$. 
\\In both plots, there is close agreement between theory (solid lines) and simulations (points, each representing the average  25 simulations).}
\label{fig2}
\end{figure}

\vspace{.1in}

\noindent {\bf ReLU activation.} Consider  the ReLU function $f(z) = \max(z,0) - (2\pi)^{-1/2}$ and Gaussian noise  ($f(z)$ is centered so that $\E f(z_{11}) = 0$, see Remark \ref{rem1}). This is a particular form of missing data: negative data elements are unobserved. 

Let  $\mu_\phi$ denote the Gaussian measure. Using Remark \ref{rem3}, we explicitly calculate $\tau(f,\mu_\phi)$:
\begin{align} \label{relu_tau}
\tau(f,\mu_\phi) = \|f\|_{\mu_\phi}^{-1} a_1 = \bigg( \int_{-\infty}^\infty f^2(z) \phi(z) dz \bigg)^{-
1/2} \int_{-\infty}^\infty z f(z) \phi(z) dz = \sqrt{\frac{\pi}{2(\pi-1)}} . 
\end{align}
 In the asymmetric setting, by Corollary \ref{cor1}, we have
\begin{equation}
\begin{aligned} &
  \langle u_1(X_n), u_1(Y_n)  \rangle^2 & \xrightarrow{a.s.} c_1^2(\tau(f,\mu_\phi) \sigma_1(X_n), \gamma), &&\quad   \langle v_1(X_n), v_1(Y_n)  \rangle^2  &\xrightarrow{a.s.} c_2^2(\tau(f,\mu_\phi) \sigma_1(X_n), \gamma) . \label{ex1.1}
\end{aligned}  
\end{equation}
This scenario is simulated in the left-hand panel of Figure \ref{fig3}. 

\vspace{.1in}

\noindent {\bf Truncated data.} Observed data may be inherently truncated, or truncation may be intentionally applied to the data as a preprocessing step. The effect of truncation on PCA depends heavily on the distribution of noise---under Gaussian noise, truncation raises the recovery threshold, while under heavy-tailed noise, truncation may dramatically lower the recovery threshold. For a given noise distribution, we are able to calculate the optimal thresholding level. 

Corollary \ref{cor:trunc} below does not directly follow from results in Section \ref{sec2} as heavy-tailed distributions may lack finite moments, violating assumption (ii). Rather, we demonstrate that by specializing the proof of Theorem \ref{thrm1} to thresholding transformations, assumptions (ii)--(iv) may be relaxed. See Example \ref{ex69}, in which we study the spiked model with Cauchy-distributed noise. 

\begin{corollary} \label{cor:trunc}
    Let $f_c(z) \coloneqq z {\bf 1} (|z| \leq c)$ denote truncation at level $c$. We assume  the elements of $\sqrt{n}X_n$ uniformly converge to zero and $\mu$ has density $\omega$ that is continuous at $\pm c$. Let $Y_n \coloneqq n^{-1/2} f_c(\sqrt{n} X_n + Z_n)$, let $F_\mu$ denote the cumulative distribution function  of $\mu$, and  define
    \begin{align} \label{truncato}
    \tau(f_c, \mu) \coloneqq \frac{F_\mu(c) - F_\mu(-c) - c(\omega(c) + \omega(-c))} {\sqrt{\textnormal{Var}(f_c(z_{11}))}} ,
    \end{align}
     and    $A_n \coloneqq \tau(f_c, \mu) {\sqrt{\textnormal{Var}(f_c(z_{11}))}} X_n + n^{-1/2}f_c(Z_n)$. Then, in the asymmetric or symmetric setting,   
    \[
        \|Y_n - A_n\|_2 \xrightarrow{a.s.} 0 .
    \]
\end{corollary}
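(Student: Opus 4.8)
The plan is to prove the statement directly: the hypotheses here on $\mu$ --- only continuity of the CDF $F$ and density $\omega$ at $\pm c$ --- are weaker than assumptions (ii)--(iv) of Theorem \ref{thrm1}, as they must be to cover heavy-tailed laws such as the Cauchy distribution of Example \ref{ex69}, for which $\mu$ admits no orthogonal-polynomial basis. The target $A_n$ is nonetheless exactly what Theorem \ref{thrm1} predicts: with $b := F(c) - F(-c) - c(\omega(c) + \omega(-c))$, definition (\ref{truncato}) gives $\tau(f_c,\mu)\sqrt{\mathrm{Var}(f_c(z_{11}))} = b$, so, writing $H_n := \sqrt n X_n$, we have $\sqrt n\,(Y_n - A_n) = R_n$ where $R_n$ is the matrix with $(i,j)$ entry $f_c(h_{ij} + z_{ij}) - f_c(z_{ij}) - b\,h_{ij}$, $h_{ij} := (H_n)_{ij}$. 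By hypothesis $\|H_n\|_\infty \to 0$, and $\|H_n\|_{\mathrm F}^2 = n\sum_i \sigma_i^2(X_n) = O(n)$ since $X_n$ has fixed rank and fixed singular values (fixed eigenvalues, in the symmetric case); it therefore suffices to show $\|R_n\|_2 = o(\sqrt n)$ almost surely. I would decompose $R_n = (R_n - \E R_n) + \E R_n$ with the expectation taken entrywise over $Z_n$ ($X_n$ being deterministic); the $(i,j)$ entry of $\E R_n$ is $\psi(h_{ij}) - b\,h_{ij}$, where $\psi(h) := \E[f_c(h + z_{11})] - \E[f_c(z_{11})]$.

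For the deterministic part $\E R_n$, the first task is to identify $b = \psi'(0)$, which also pins down the constant $\tau(f_c,\mu)$. The substitution $v = h + z_{11}$ gives $\E[f_c(h + z_{11})] = \int_{-c-h}^{c-h}(v + h)\,\omega(v)\,dv = \int_{-c-h}^{c-h} v\,\omega(v)\,dv + h\,(F(c-h) - F(-c-h))$, and differentiating at $h = 0$ --- where continuity of $F$ and $\omega$ at $\pm c$ makes the boundary contributions converge --- yields $\psi'(0) = F(c) - F(-c) - c(\omega(c) + \omega(-c)) = b$. Hence $\psi(h) = b\,h + o(h)$ as $h \to 0$, so that $|\psi(h_{ij}) - b\,h_{ij}| \le \varepsilon(\|H_n\|_\infty)\,|h_{ij}|$ with $\varepsilon(t) \to 0$ as $t \to 0$, and consequently $\|\E R_n\|_2 \le \|\E R_n\|_{\mathrm F} \le \varepsilon(\|H_n\|_\infty)\,\|H_n\|_{\mathrm F} = o(\sqrt n)$.

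For the fluctuation $W_n := R_n - \E R_n$, the entries $W_{ij} = f_c(h_{ij} + z_{ij}) - f_c(z_{ij}) - \psi(h_{ij})$ are independent (independent above the diagonal, in the symmetric case), have mean zero, and are bounded in absolute value by $4c$. The crucial point is that their variances vanish: $f_c(h + z) - f_c(z)$ equals $h$ when both $z$ and $h + z$ lie in $[-c,c]$, equals $0$ when both lie outside, and is bounded by $c$ on the remaining set of $z$, whose $\mu$-measure is at most $p(h) := (F(c + |h|) - F(c - |h|)) + (F(-c + |h|) - F(-c - |h|))$, a quantity non-decreasing in $|h|$ with $p(h) \to 0$ as $h \to 0$ by continuity of $F$ at $\pm c$. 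Thus $\mathrm{Var}(W_{ij}) \le \E[(f_c(h_{ij} + z_{ij}) - f_c(z_{ij}))^2] \le h_{ij}^2 + c^2\,p(h_{ij}) \le \rho_n$, where $\rho_n := \|H_n\|_\infty^2 + c^2\,p(\|H_n\|_\infty) \to 0$. By classical bounds on the spectral norm of a rectangular (resp.\ symmetric) random matrix with independent, mean-zero, uniformly bounded entries of variance at most $\rho_n$, $\E\|W_n\|_2 \lesssim \sqrt{n\rho_n} + 4c\sqrt{\log n} = o(\sqrt n)$. To pass to almost-sure convergence, observe that $\|W_n\|_2$ is a convex, $O(1)$-Lipschitz (in Frobenius distance) function of the $O(n^2)$ independent entries $W_{ij}$, each taking values in $[-4c,4c]$; Talagrand's concentration inequality then gives $\mathbf P(|\|W_n\|_2 - \mathrm{Med}| > \delta\sqrt n)$ decaying exponentially in $n$ for each fixed $\delta > 0$, hence summable, and since the bound on $\E\|W_n\|_2$ also controls the median up to an additive $O(c)$, Borel--Cantelli gives $n^{-1/2}\|W_n\|_2 \to 0$ almost surely. (In the symmetric setting the $n$ diagonal entries of $\E R_n$ and of $W_n$ are negligible: the diagonal of $\E R_n$ contributes $\sqrt n\cdot o(1)$ to the Frobenius norm since $\psi(h)\to 0$ as $h\to0$, and a diagonal matrix with $O(1)$ entries has operator norm $O(1)$.) Combining the two pieces, $\|Y_n - A_n\|_2 = n^{-1/2}\|R_n\|_2 \le n^{-1/2}(\|W_n\|_2 + \|\E R_n\|_2) \to 0$ almost surely, in both the asymmetric and symmetric settings.

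The main obstacle is the fluctuation step. Because $f_c$ is discontinuous, the naive estimate $\|W_n\|_2 = O(\sqrt n)$ coming from boundedness of the entries falls short of $o(\sqrt n)$; one genuinely needs the vanishing-variance bound $\mathrm{Var}(W_{ij}) \to 0$ --- precisely the point at which continuity of $F$ at $\pm c$ is used --- together with a spectral-norm bound whose leading term scales with the variance rather than the entrywise bound, and a concentration argument to upgrade the bound on $\E\|W_n\|_2$ to almost-sure convergence. By contrast, identifying $\psi'(0) = b$ (which fixes the constant in $A_n$ and uses continuity of $\omega$ at $\pm c$) and the Frobenius bound on $\E R_n$ are routine.
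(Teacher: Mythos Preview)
Your proposal is correct and follows essentially the same route as the paper: decompose $\sqrt{n}(Y_n-A_n)$ into its mean and its centered fluctuation, bound the mean by the Frobenius norm using $\psi(h)=bh+o(h)$ (which is exactly the paper's computation (\ref{a38})), and control the fluctuation by showing the entrywise variances tend to zero while the fourth moments stay bounded. The only difference is the tool you invoke for the fluctuation: the paper simply applies its Lemma~\ref{lem:max_eig} (the Yin--Bai--Krishnaiah extension already stated in the appendix), which directly gives $n^{-1/2}\|W_n\|_2\to 0$ almost surely once $\sup_{i,j}\mathrm{Var}(W_{ij})\to 0$ and fourth moments are bounded, whereas you reach the same conclusion via a non-asymptotic spectral-norm bound of Bandeira--van Handel type together with Talagrand concentration and Borel--Cantelli. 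Both are valid; the paper's route is more economical within its own framework, while yours is self-contained and gives a quantitative rate. One cosmetic slip: the phrase ``substitution $v=h+z_{11}$'' is misleading---your displayed integral $\int_{-c-h}^{c-h}(v+h)\omega(v)\,dv$ is simply the original integral written in the variable $z$ (renamed $v$), not the result of that substitution---but the formula and the derivative $\psi'(0)=b$ are correct.
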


\begin{remark}
    For measures $\mu$ additionally satisfying assumptions (ii)--(iv), definition (\ref{truncato}) is consistent with $\tau(f_c,\mu)$ as defined in (\ref{tau_def}). \end{remark}

 Used together with Corollaries \ref{cor1} and \ref{cor2}, Corollary \ref{cor:trunc} yields the limiting cosine similarities between the singular vectors of $X_n$ and $Y_n$. In particular, the recovery threshold of PCA is $\tau(f_c,\mu)^{-1}\gamma^{1/4}$: 
\begin{align*}
\liminf_{n \rightarrow \infty} \, \langle u_i(X_n), u_{i}(Y_n - \E f_c(z_{11}) {\bf 1}_n {\bf 1}_p^\top) \rangle^2 > 0 \quad \text{ if and only if } \quad  \liminf_{n \rightarrow \infty} \tau(f_c,\mu) \sigma_i(X_n) > \gamma^{1/4} ,  \quad  1 \leq i \leq r.
\end{align*}

\begin{example}  Under Gaussian noise,  using (\ref{truncato}),
   \[
  \tau(f_c,\mu_\phi) = \sqrt{\text{erf}\Big(\frac{c}{\sqrt{2}}\Big) - 2 c \phi(c) }.
 \]  
 Note that $\tau(f_c, \mu_\phi) < 1$ in accordance with Corollary \ref{cor3}. 
\end{example}

\begin{example} \label{ex69} Under Cauchy-distributed noise (let $\mu$ have density $\omega(z) = (1+z^2)^{-1}$), the LSD of $n^{-1} Z_n^\top Z_n$ is heavy-tailed. As a result, the leading singular vectors of $\sqrt{n}X_n + Z_n$ are orthogonal to those of $X_n$ and PCA is ineffective. 
Truncating, $f_c(\sqrt{n} X_n + Z_n)$ is approximately a spiked matrix with signal-to-noise ratio 
    \begin{align*}
        \tau(f_c,\mu) = \frac{\sqrt{2} \big(\arctan(c) - c(1+c^2)^{-1}\big)}{\sqrt{\pi(c - \arctan(c))}} .
    \end{align*}
     The optimal thresholding level is $c^* = \text{argmax}_c \tau(f_c,\mu) \approx 2.028$.  This example is simulated in Figure \ref{fig3}.
\end{example}

\begin{figure}[]
\centering 
\includegraphics[height=2.4in]{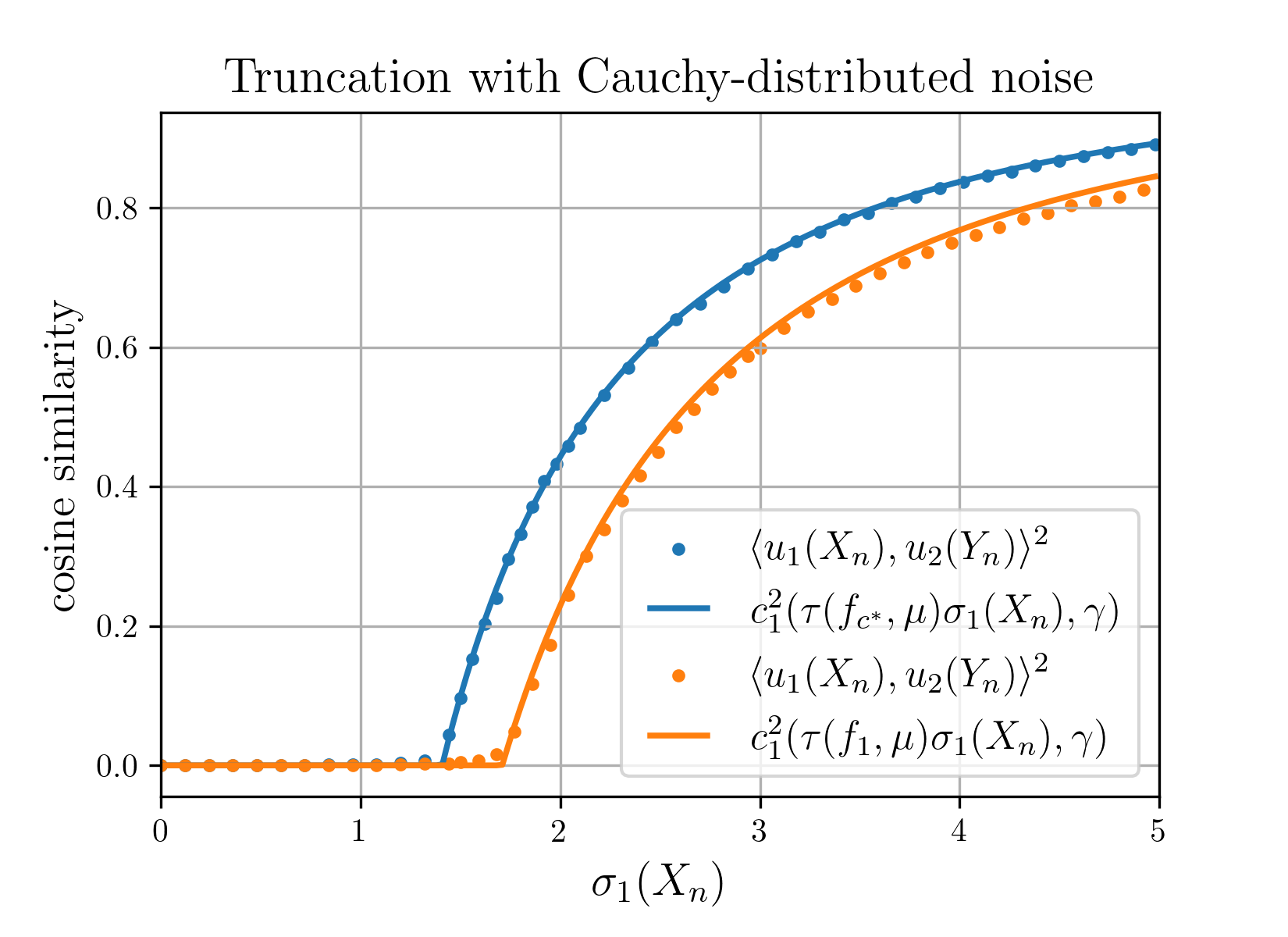} \hspace{-.6cm}
\includegraphics[height=2.4in]{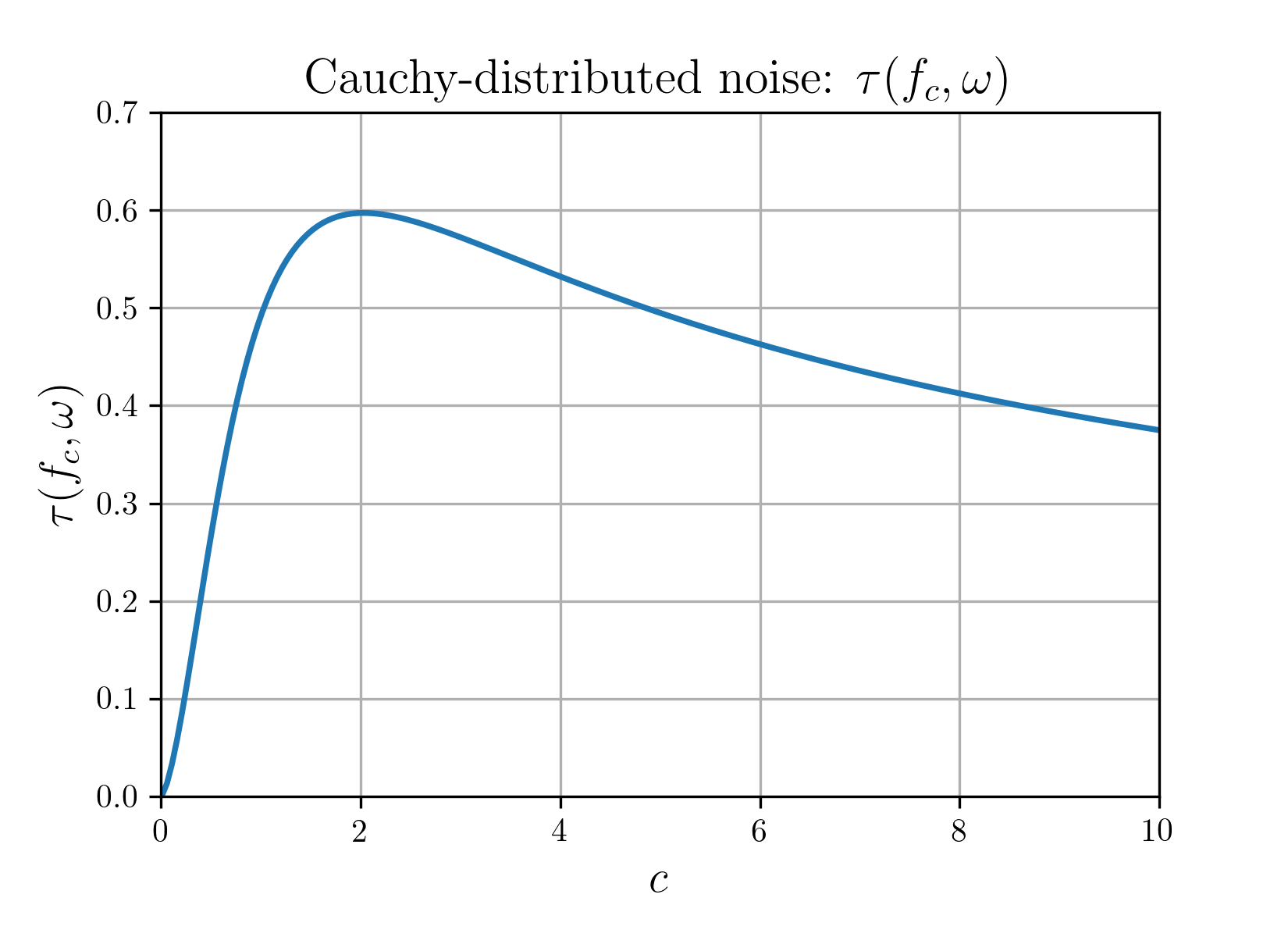}
\caption{ {\bf Left}: cosine similarities between the left singular vectors of $X_n$ and $Y_n = n^{-1/2} f_c(\sqrt{n}X_n+Z_n)$ with Cauchy-distributed noise and $c = c^*$ (blue) and $c =1$ (orange). Cosine similarities between the  singular vectors of $X_n$ and the raw data $X_n + n^{-1/2}Z_n$ are not plotted as they are $O(n^{-1/2})$ over the domain of this plot. There is close agreement between theory (solid lines) and simulations (points, each representing the average  25 simulations).
\\ {\bf Right}: Under Cauchy-distributed noise, $\tau(f_c,\mu)$ is maximized at $c^* \approx 2.028$.}
\label{fig3}
\end{figure}

\noindent {\bf Optimal singular value shrinkage.} Due to singular value bias and singular vector inconsistency, estimation of $X_n$ is improvable by singular value shrinkage. Assume the spiked matrix model $Y_n = X_n + n^{-1/2} Z_n$ (under the conditions of Lemma \ref{lem:bgn12}) and consider estimators of $X_n$ of the form \[
Y_{n, \eta} \coloneqq \sum_{i=1}^{n \wedge p } \eta(\sigma_i(Y_n)) u_i(Y_n) v_i(Y_n)^\top , \]
where $\eta: \mathbb{R}_{\geq 0} \mapsto  \mathbb{R}_{\geq 0}$ is a shrinkage rule. There exists an optimal rule $\eta^*$  developed in \cite{GD17, leeb}\footnote{\cite{leeb} corrects an error in the formula of $\eta^*$ for $\gamma \neq 1$ in \cite{GD17}.}, given by
\begin{equation} \label{shrink}
\begin{aligned}
    \eta^*(\sigma) &\coloneqq t(\sigma) \sqrt{\frac{t^2(\sigma) + \min(1,\gamma)}{t^2(\sigma) + \max(1,\gamma)}}, \\
 t^2(\sigma) & \coloneqq \begin{dcases} \frac{\sigma^2 - 1 - \gamma + \sqrt{(\sigma^2 - 1 - \gamma)^2 - 4 \gamma}}{2} &\sigma > 1 + \sqrt{\gamma} \\
0 &       \sigma \leq 1 + \sqrt{\gamma} 
 \end{dcases} ,
\end{aligned}    
\end{equation}
such that 
\begin{align}
     \lim_{n \rightarrow \infty}\| X_n - Y_{n, \eta^*}\|_2 =   \underset{{\alpha \in \mathbb{R}_{\geq 0}^r}}{\mathrm{argmin}} \lim_{n \rightarrow \infty} \Big\| X_n - \sum_{i=1}^r \alpha_i u_i(Y_n) v_i(Y_n)^\top \Big\|_2  
\end{align}
(the limits are well-defined almost surely). That is, applying $\eta^*$ to the (biased) singular values of $Y_n$ (without knowledge of the rank $r$ of $X_n$), $Y_{n, \eta^*}$ asymptotically achieves the optimal operator-norm loss for estimation of $X_n$ among all shrinkage estimators based on $Y_n$. 

Corollary \ref{cor:shrink} below states that $\eta^*$, designed for the spiked matrix model, extends (with no modifications necessary) to the elementwise-transformed model.  The proof follows from Theorem \ref{thrm1} and the proof of Theorem 1 of \cite{GD17}. In essence, this is true because $Y_n$ is approximately a spiked matrix with signal term proportional to $X_n$. For example, given binomial data of the form in (\ref{d3x}),  $Y_{n,\eta^*}$ yields an improved estimate of the structure $X_n$ compared to the rank-aware estimator $\sum_{i=1}^r \sigma_i(Y_n) u_i(Y_n) v_i(Y_n)^\top$.   

\begin{corollary}\label{cor:shrink}
    Let  $Y_n \coloneqq n^{-1/2} f(\sqrt{n} X_n + Z_n)$  and  $\|f\|_\mu = 1$. Under the asymmetric setting and assumptions (i)--(iv), $Y_{n,\eta^*}$ is an optimal shrinkage estimator of $\tau(f,\mu) X_n$ under operator norm loss: 
    \begin{align}
     \lim_{n \rightarrow \infty}\| \tau(f,\mu) X_n - Y_{n, \eta^*}\|_2 =   \underset{{\alpha \in \mathbb{R}_{\geq 0}^r}}{\mathrm{argmin}} \lim_{n \rightarrow \infty} \Big\| \tau(f,\mu)  X_n - \sum_{i=1}^r \alpha_i u_i(Y_n) v_i(Y_n)^\top \Big\|_2 .   
\end{align}
\end{corollary}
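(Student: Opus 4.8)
The plan is to reduce the statement to the optimality of $\eta^*$ under the exact spiked model, which is (\ref{shrink}) together with \cite{GD17, leeb}. First I would apply Theorem~\ref{thrm1} with $\|f\|_\mu = 1$ to obtain $\|Y_n - A_n\|_2 \xrightarrow{a.s.} 0$, where $A_n = \tau(f,\mu) X_n + n^{-1/2} f(Z_n)$. The entries of $f(Z_n)$ are i.i.d.\ with mean $a_0 = \E f(z_{11}) = 0$ (assumption (iv)), variance $\int f^2\, d\mu - a_0^2 = \|f\|_\mu^2 = 1$, and---since $f$ is polynomially bounded and $\mu$ has finite moments by assumption (ii)---finite moments of all orders. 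Hence $A_n$ is a genuine spiked matrix meeting the hypotheses of Lemma~\ref{lem:bgn12} and of the shrinkage-optimality theorem of \cite{GD17, leeb}; applied to $A_n$, the latter gives that $Y_{A_n, \eta^*}$ asymptotically attains $\min_{\alpha \in \mathbb{R}_{\ge 0}^r} \lim_{n} \| \tau(f,\mu) X_n - \sum_{i} \alpha_i\, u_i(A_n) v_i(A_n)^\top \|_2$.

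The second step is to transfer this from $A_n$ to $Y_n$ by an operator-norm perturbation argument. By Weyl's inequality $|\sigma_i(Y_n) - \sigma_i(A_n)| \le \|Y_n - A_n\|_2 \to 0$. For a supercritical index $i$ (i.e.\ $\tau(f,\mu)\sigma_i(X_n) > \gamma^{1/4}$), Corollary~\ref{cor1} gives that $\sigma_i(A_n)$ converges to $\lambda(\tau(f,\mu)\sigma_i(X_n),\gamma)^{1/2}$, a point strictly above the bulk edge $1 + \sqrt\gamma$ and, after merging equal $\sigma_i(X_n)$ as in the remarks following Lemma~\ref{lem:bgn12}, isolated from the other outliers; since $\eta^*$ is continuous on $(1+\sqrt\gamma,\infty)$ this yields $\eta^*(\sigma_i(Y_n)) - \eta^*(\sigma_i(A_n)) \to 0$, and a Davis--Kahan $\sin\Theta$ bound gives $u_i(Y_n)v_i(Y_n)^\top - u_i(A_n)v_i(A_n)^\top \to 0$ in operator norm. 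For a subcritical index $i$, the vectors $u_i(Y_n), v_i(Y_n)$ are asymptotically orthogonal to the fixed, finite-dimensional singular subspaces of $X_n$ (the subcritical case of (\ref{g28v}), itself a consequence of $\|Y_n - A_n\|_2 \to 0$ and the isotropic local law, cf.\ Remark~\ref{rem_bgn1}), so the optimal coefficient on any such direction vanishes and it does not contribute to the limiting loss. Assembling the finitely many rank-one terms gives $\lim_{n}\|\tau(f,\mu) X_n - Y_{n,\eta^*}\|_2 = \lim_{n}\|\tau(f,\mu) X_n - Y_{A_n,\eta^*}\|_2$, and, since the supercritical singular triples of $Y_n$ converge to those of $A_n$, the minimization over estimators built from the singular triples of $Y_n$ has the same value as the one over $A_n$. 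Hence
\[
\lim_{n\to\infty}\|\tau(f,\mu) X_n - Y_{n,\eta^*}\|_2 = \underset{\alpha\in\mathbb{R}_{\ge 0}^r}{\mathrm{argmin}}\ \lim_{n\to\infty}\Big\| \tau(f,\mu) X_n - \sum_{i=1}^r \alpha_i\, u_i(Y_n) v_i(Y_n)^\top \Big\|_2 ,
\]
all limits existing almost surely as in \cite{GD17, leeb}.

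The main obstacle is the transfer near the spectral edge: $\eta^*$ is discontinuous at $1 + \sqrt\gamma$, so a near-critical singular value of $Y_n$ may fall on the opposite side of the edge from the matching one of $A_n$, and one must verify this does not alter the limiting loss. This is controlled because any such component is, asymptotically, a rank-one matrix with bounded operator norm whose left and right vectors are orthogonal to the signal, so its effect on the loss of $Y_{n,\eta^*}$ is governed by exactly the estimates---Weyl's inequality, the $\sin\Theta$ inequality, and the delocalization underlying Corollary~\ref{cor1}---that prove Theorem~\ref{thrm1} and that already enter the proof of optimality of $\eta^*$ for the exact model $A_n$; this is the precise sense in which the corollary is immediate once Theorem~\ref{thrm1} is in hand.
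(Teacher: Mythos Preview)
Your proposal is correct and follows exactly the route the paper intends: the paper offers no separate proof, merely stating that ``As Theorem~\ref{thrm1} establishes convergence in operator norm, we immediately have the following result,'' and your argument fills in precisely the details---apply the shrinkage optimality of \cite{GD17,leeb} to the genuine spiked matrix $A_n = \tau(f,\mu) X_n + n^{-1/2} f(Z_n)$, then transfer to $Y_n$ via Weyl and Davis--Kahan using $\|Y_n - A_n\|_2 \to 0$.
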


\begin{remark}
    We  assume $\|f\|_\mu =1$ in Corollary \ref{cor:shrink} without loss of generality: if $\|f\|_\mu \neq 1$, estimate $X_n$ by 
    \[ Y_{n,\eta} \coloneqq \|f\|_\mu \sum_{i=1}^{n \wedge p} \eta\big(\|f\|_\mu^{-1} \sigma_i(Y_n)\big) u_i(Y_n) v_i(Y_n)^\top .\]
     Note that $\|f\|_\mu$ is consistently estimated by $(1+\sqrt{\gamma})^{-1} \sigma_{k}(Y_n)$, where $k > r$ is any fixed upper bound on the rank of $X_n$ (see Lemma \ref{lem:max_eig0}).
\end{remark}



\begin{remark}
    An analogous results holds in the symmetric setting; optimal shrinkage functions for the spiked Wigner model are derived in \cite{DF22}.
\end{remark}


\section{Proof of  Theorems \ref{thrm1} and \ref{thrm2}} \label{sec:proofs}

In this section, we prove Theorems \ref{thrm1} and \ref{thrm2} in the asymmetric setting;  proofs in the symmetric setting are similar and omitted. Proofs of corollaries are deferred to the appendix. 

Our approach is to argue (1) $Y_n$ is well approximated by $n^{-1/2} f_K(\sqrt{n}X_n + Z_n)$ for large $K$ (Lemma \ref{lem1}) and (2) Theorems \ref{thrm1} and \ref{thrm2} hold under polynomial transformations (Lemma \ref{lem2}). We first recall a classic result in random matrix theory:

\begin{lemma} \label{lem:max_eig0} (Theorem 3.1 of \cite{YB88})
     Let $Z_n \coloneqq  (z_{ij}: 1 \leq i \leq n, 1 \leq j \leq p)$ denote an i.i.d.\ array of real random variables with mean zero, variance $\sigma^2$, and finite fourth moment. As $n \rightarrow \infty$ and $p/n \rightarrow \gamma > 0$,
     \begin{align}
         \frac{1}{\sqrt{n}} \|Z_n\|_2 \xrightarrow{a.s.} (1+\sqrt{\gamma})\sigma. 
     \end{align}
\end{lemma}

\noindent We also require the following extension of Lemma \ref{lem:max_eig0}, the proof of which is a straightforward generalization of the proof of equation (4.1) in \cite{YB88}: 
\begin{lemma} \label{lem:max_eig}
    Let $Z_n \coloneqq (z_{ij}^{(n)}: 1 \leq i \leq n, 1 \leq j \leq p)$ denote an array of independent, real random variables with mean zero and uniformly bounded  second and fourth moments:
    \begin{align*}
        & \limsup_{n \rightarrow \infty} \sup_{1 \leq i \leq n, 1 \leq j \leq p} \E | z_{ij}^{(n)} |^2 \leq \sigma^2 , && \limsup_{n \rightarrow \infty}  \sup_{1 \leq i \leq n, 1 \leq j \leq p} \E | z_{ij}^{(n)} |^4 < \infty.
    \end{align*}
As $n \rightarrow \infty$ and $p/n \rightarrow \gamma > 0$, almost surely, 
\begin{align} \label{kmr6}  \limsup_{n \rightarrow \infty} \frac{1}{\sqrt{n}} \|Z_n\|_2 \leq (1 + \sqrt{\gamma}) \sigma . 
\end{align}
\end{lemma}

\begin{lemma} \label{lem1} Let $\Delta_{n,K} \coloneqq Y_n - n^{-1/2} f_K(\sqrt{n}X_n + Z_n)$. Under the asymmetric setting and assumptions (i)--(iv),
    \begin{align}
        \lim_{K \rightarrow \infty} \lim_{n \rightarrow\infty} \|\Delta_{n,K}\|_2 \stackrel{a.s.}{=} 0 .  \label{as8jd}
    \end{align}
    
\end{lemma}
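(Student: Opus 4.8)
The plan is to control $\|\Delta_{n,K}\|_2 = \|n^{-1/2}(f - f_K)(\sqrt{n}X_n + Z_n)\|_2$ by splitting the elementwise residual $g_K \coloneqq f - f_K$ into a centered part and a rank-controlled mean part, using a Taylor-type expansion of $\mathbb{E}[g_K(x + z_{ij})]$ in the small quantity $x = (\sqrt{n}X_n)_{ij}$. Write each entry of $n^{1/2}\Delta_{n,K}$ as $g_K(\sqrt{n}X_n + Z_n)_{ij}$ and decompose it as $\big(g_K(\sqrt{n}X_n + Z_n)_{ij} - \mathbb{E}_{z}[g_K((\sqrt{n}X_n)_{ij} + z_{ij})]\big) + \mathbb{E}_{z}[g_K((\sqrt{n}X_n)_{ij} + z_{ij})]$. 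The first (centered) matrix should be handled as a matrix with independent, mean-zero entries whose variances are uniformly $O(\|g_K\|_\mu^2) + o_K(1)$ — here assumption (i), that $\sqrt{n}X_n \to 0$ uniformly, together with (iii) ($g_K$ continuous $\mu$-a.e., polynomially bounded) and dominated convergence gives $\mathrm{Var}_z(g_K((\sqrt{n}X_n)_{ij} + z_{ij})) \to \|g_K\|_\mu^2 \to 0$ as $n\to\infty$ then $K\to\infty$; a standard operator-norm bound for random matrices with independent entries (e.g. via the moment method or a Bai–Yin / Latała-type estimate, noting polynomial boundedness supplies the needed moments through assumption (ii)) then yields $n^{-1/2}\|\cdot\|_2 \lesssim (\|g_K\|_\mu + o(1))(1 + \sqrt{\gamma})$, which vanishes in the iterated limit.

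For the mean part, the key is that $\mathbb{E}_z[g_K(x + z_{11})] = \mathbb{E}_z[g_K(z_{11})] + x\,\frac{d}{dx}\big|_{x=0}\mathbb{E}_z[g_K(x+z_{11})] + (\text{higher order})$; since $a_0 = 0$ and $g_K$ is orthogonal to $q_0 = $ const, the zeroth-order term involves only $\int g_K \,d\mu$, which is $0$ by construction of $f_K$ (as $a_0=0$ and $f_K$ is a partial sum of $k\geq 1$ terms, $\int f_K d\mu = 0 = \int f d\mu$). The first-order coefficient is exactly $\lim_{x\to 0} x^{-1}\int(f - f_K)(x+z)\,d\mu(z)$, which assumption (\ref{assumption4.1}) forces to $0$ in the iterated limit $\lim_K \lim_{x\to 0}$. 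Thus the mean matrix is, entrywise, $(\sqrt{n}X_n)_{ij}\cdot \varepsilon_{K}(n) + (\text{Hadamard higher-order terms in } \sqrt{n}X_n)$; the leading piece has operator norm at most $\varepsilon_K(n)\|\sqrt{n}X_n\|_2 = \varepsilon_K(n)\cdot O(1)$ (since $X_n$ is fixed rank with bounded singular values, $\|\sqrt{n}X_n\|_2 = O(\sqrt{n})$, so after the $n^{-1/2}$ prefactor this is $O(\varepsilon_K(n)) \to 0$), while the higher-order Hadamard terms $(\sqrt{n}X_n)^{\odot k}$ for $k\geq 2$ are controlled by the second clause of assumption (i), $n^{-1/2}\|(\sqrt{n}X_n)^{\odot k}\|_2 \to 0$, combined with polynomial bounds on the Taylor coefficients of $\mathbb{E}_z[g_K(\cdot + z)]$ (finite by assumption (ii) and polynomial boundedness of $f$, $f_K$).

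The main obstacle is making the Taylor expansion of $x \mapsto \mathbb{E}_z[g_K(x + z_{11})]$ rigorous and uniform when $f$ (hence $g_K$) may be discontinuous and $\mu$ need not have a density: one cannot differentiate under the integral naively. I would instead work directly with the finite-$K$ object — $g_K = f - f_K$ where $f_K$ is a polynomial, so $\mathbb{E}_z[f_K(x+z)]$ is a genuine polynomial in $x$ with coefficients given by moments of $\mu$ (finite by (ii)), and its derivative at $0$ is $\int f_K' d\mu = \sum_{k\le K} a_k b_k$; for the $f$ part, $\mathbb{E}_z[f(x+z)] - \mathbb{E}_z[f(z)]$ is handled via assumption (\ref{assumption4.1}) which is precisely designed to bypass differentiability. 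The remaining care is a uniformity-in-$(i,j)$ argument: the bound $\varepsilon_K(n)$ must be controlled uniformly over the entries $(\sqrt{n}X_n)_{ij}$, all of which lie in a shrinking neighborhood of $0$ by assumption (i), so a single modulus-of-continuity statement for the iterated limit in (\ref{assumption4.1}) suffices. I expect the random-matrix operator-norm estimate for the centered part to be routine given polynomial boundedness, so the real work is bookkeeping the deterministic mean part through the two clauses of assumption (i) and the two limits (\ref{assumption4.0})–(\ref{assumption4.1}).
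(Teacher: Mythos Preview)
Your decomposition into a centered piece and a mean piece is exactly what the paper does, and your treatment of the centered piece is essentially the same: one shows the entrywise second moments of $\sqrt{n}\Delta_{n,K}$ are uniformly small (via dominated convergence, using that $f$ is polynomially bounded and continuous $\mu$-a.e.) and the fourth moments are uniformly bounded, then invokes a Bai--Yin type bound (Lemma~\ref{lem:max_eig}) to get $\|\Delta_{n,K}-\E\Delta_{n,K}\|_2\to 0$ in the iterated limit.

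Where you diverge from the paper is in the mean part, and there your route is both more complicated and not fully justified. You propose to Taylor expand $x\mapsto \E_z[g_K(x+z)]$ and then control higher-order Hadamard pieces via the second clause of assumption~(i). The obstacle you yourself flag is real: for the $f$ half of $g_K=f-f_K$, the map $x\mapsto\E_z[f(x+z)]$ need not admit a second-order expansion under assumptions (ii)--(iv) alone (no density, $f$ possibly discontinuous), so the ``higher-order Hadamard terms'' you invoke are not defined objects. Splitting $g_K$ into $f$ and $f_K$ and Taylor-expanding only $f_K$ does not help either, because assumption~(\ref{assumption4.1}) controls $f-f_K$ as a whole, not $f$ in isolation.

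The paper sidesteps this entirely by passing to the Frobenius norm. Writing $(\E\Delta_{n,K})_{ij}=x_{ij}\cdot\big(\tfrac{1}{\sqrt{n}x_{ij}}\int g_K(\sqrt{n}x_{ij}+z)\,d\mu(z)\big)$ gives
\[
\|\E\Delta_{n,K}\|_2 \le \|\E\Delta_{n,K}\|_F \le \|X_n\|_F\cdot\sup_{i,j}\Big|\tfrac{1}{\sqrt{n}x_{ij}}\int g_K(\sqrt{n}x_{ij}+z)\,d\mu(z)\Big|,
\]
and since $\|X_n\|_F$ is bounded and $\sqrt{n}x_{ij}\to 0$ uniformly by assumption~(i), the supremum vanishes in the iterated limit directly by (\ref{assumption4.1}). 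No Taylor expansion, no higher-order Hadamard powers --- those appear only later, in Lemma~\ref{lem2}, where one expands the \emph{polynomial} $f_K$. The Frobenius trick is what lets the argument go through without any smoothness of $x\mapsto\E_z[f(x+z)]$ beyond what (\ref{assumption4.1}) literally provides.
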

\begin{proof}  We shall use Lemma \ref{lem:max_eig} to bound the operator norm of $\Delta_{n,K} - \E\Delta_{n,K}$. First, we must establish bounds on the moments of elements of $\Delta_{n,k}$.   Since $f$ is polynomially bounded and continuous $\mu$-almost everywhere, the dominated convergence theorem yields
\begin{align*}
 & \lim_{x \rightarrow 0} \int (f(x+z) - f(z))^2 d \mu(z) = 0 , &&  \lim_{x\rightarrow 0} \int (f_K(x+z) - f_K(z))^2 d \mu(z) = 0 . 
\end{align*}
Together with  the bound   
\begin{align*}
    \int (f(x+z) - f_K(x+z))^2 d\mu(z) & \leq 3 \int (f(x+z) - f(z))^2 d\mu(z) + 3 \int (f(z) - f_K(z))^2 d\mu(z)  \\
    & + 3 \int (f_K(x) - f_K(x+z))^2 d\mu(z) ,
\end{align*}
these limits and (\ref{assumption4.0}) imply
\begin{align*}
    \lim_{K \rightarrow 0} \lim_{x \rightarrow 0}  \int (f(x+z) - f_K(x+z))^2 d\mu(z) = 0.
\end{align*}

Thus, as the elements of $\sqrt{n} X_n$ uniformly converge to zero (assumption (i)), the second moments of elements of $\sqrt{n}\Delta_{n,k}$ are uniformly controlled:
\begin{align} \label{42}
    \lim_{K \rightarrow \infty} \lim_{n \rightarrow \infty} \sup_{1 \leq i \leq n, 1 \leq j \leq p} \E\big|\sqrt{n} y_{ij} - f_K(\sqrt{n} x_{ij} + z_{ij})\big|^2 =  0 . 
\end{align}
Similarly, we have a uniform bound on the fourth moments of elements of $\sqrt{n} \Delta_{n,K}$: 
\begin{align*} 
     \lim_{n \rightarrow \infty} \sup_{1 \leq i \leq n, 1 \leq j \leq p} \E\big|\sqrt{n} y_{ij} - f_K(\sqrt{n} x_{ij} + z_{ij})\big|^4 < \infty . 
\end{align*}
Since the elements of $\sqrt{n} \Delta_{n,K}$ are independent, 
Lemma \ref{lem:max_eig} enables us to conclude that  
\begin{align} \label{as7jd}
\lim_{K \rightarrow \infty} \limsup_{n \rightarrow \infty} \| \Delta_{n,K} - \E \Delta_{n,K} \|_2 \stackrel{a.s.}{=} 0 .
\end{align}

It remains to bound $\| \E \Delta_{n,K}\|_2$. Using $\|\E \Delta_{n,K}\|_2 \leq \|\E \Delta_{n,K}\|_F$, we have 
\begin{equation}
\begin{aligned}  \label{as9jd}
 \|\E \Delta_{n,K} \|_2 & \leq  \frac{1}{\sqrt{n}} \bigg(\sum_{i=1}^n \sum_{j=1}^p \big [\E (\sqrt{n}y_{ij} - f_K(\sqrt{n}x_{ij} + z_{ij})) \big]^2 \bigg)^{1/2} \\
& \leq   \|X_n\|_F \cdot \sup_{\substack{1 \leq i \leq n, 1 \leq j \leq p, \\ x_{ij} \neq 0}} \bigg| \frac{1}{\sqrt{n} x_{ij}} \int (f(\sqrt{n} x_{ij} + z) - f_K(\sqrt{n} x_{ij} + z)) d \mu(z) \bigg|  
\end{aligned}    
\end{equation}   
(as $x_{ij} = 0$ implies $\E (\Delta_{n,k})_{ij} = 0$,  to bound the Frobenius norm of $\Delta_{n,K}$, we restrict attention to indices with $x_{ij} \neq 0$).
Therefore, using $\|X_n\|_F^2 = \sum_{i=1}^r \sigma_i^2(X_n) < \infty$, assumption (i), and  (\ref{assumption4.1}), 
\begin{align} \label{qerg}
    \lim_{K \rightarrow \infty} \limsup_{n \rightarrow \infty} \| \E \Delta_{n,K} \|_2 \stackrel{a.s.}{=} 0 .
\end{align}
Equations (\ref{as7jd}) and (\ref{qerg}) yield (\ref{as8jd}), completing the proof.
\end{proof}

\begin{lemma}\label{lem2} Define the matrix
\[
A_{n,K} \coloneqq \bigg(\sum_{k=1}^K a_k b_k \bigg)  X_n + \frac{1}{\sqrt{n}} f_K(Z_n) .
\]
Under the asymmetric setting and assumptions (i)--(iv),
\[
 \lim_{n \rightarrow \infty} \|Y_n - \Delta_{n,K} -  A_{n,K} \|_2 \stackrel{a.s.}{=} 0 . 
\]
\end{lemma}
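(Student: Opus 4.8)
The goal of Lemma \ref{lem2} is to show that $n^{-1/2} f_K(\sqrt{n}X_n + Z_n)$---which equals $Y_n - \Delta_{n,K}$ by definition---is asymptotically equal in operator norm to $A_{n,K} = (\sum_{k=1}^K a_k b_k) X_n + n^{-1/2} f_K(Z_n)$. Since $f_K$ is a fixed polynomial of degree at most $K$, the plan is to expand $f_K(\sqrt{n}X_n + Z_n)$ elementwise using the Taylor/binomial expansion of the polynomial, isolate the ``zeroth-order'' term $f_K(Z_n)$, the ``first-order'' term which should produce the signal $(\sum_k a_k b_k) X_n$ after taking expectations, and show all remaining terms are negligible in operator norm.

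Concretely, writing $f_K = \sum_{k=1}^K a_k q_k$ with each $q_k$ a polynomial, I would expand $q_k(\sqrt{n}x_{ij} + z_{ij}) = \sum_{m=0}^{\deg q_k} \binom{...}{...} q_k^{(m)}(z_{ij}) (\sqrt{n}x_{ij})^m / m!$ (Taylor's theorem for polynomials, exact with no remainder). This decomposes $n^{-1/2}f_K(\sqrt{n}X_n+Z_n)$ into $n^{-1/2} f_K(Z_n)$ (the $m=0$ term), plus a first-order term $\sum_{i,j} x_{ij} f_K'(z_{ij}) E_{ij}$ (the $m=1$ term, where $E_{ij}$ is a matrix unit), plus higher-order terms involving $(\sqrt{n}x_{ij})^{\odot m}$ for $m \geq 2$. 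For the first-order term, I would split $f_K'(z_{ij}) = \E f_K'(z_{11}) + (f_K'(z_{ij}) - \E f_K'(z_{11}))$: the mean part gives $(\E f_K'(z_{11})) X_n = (\sum_{k=1}^K a_k b_k) X_n$ exactly, using $\E f_K'(z_{11}) = \int f_K'\, d\mu = \sum_k a_k \langle q_k', 1\rangle_\mu = \sum_k a_k b_k$; the fluctuation part is a matrix with independent mean-zero entries scaled by $x_{ij}$, whose operator norm I would bound via the incoherence assumption (i) (the $x_{ij}$ are uniformly $o(n^{-1/2})$) together with a concentration bound such as Lemma \ref{lem:max_eig}. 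The higher-order terms ($m \geq 2$) are Hadamard products of $(\sqrt{n}X_n)^{\odot m}$ (whose entries and Frobenius-type norms vanish by assumption (i), specifically the condition $n^{-1/2}\|(\sqrt{n}X_n)^{\odot k}\|_2 \to 0$ for $k \geq 2$) with bounded-moment random matrices; I would control these in operator norm by passing to Frobenius norm or by a Latala/Bandeira-type bound after the expectation is removed.

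The main obstacle I anticipate is controlling the operator norm of the first-order fluctuation matrix $M_n$ with entries $x_{ij}(f_K'(z_{ij}) - \E f_K'(z_{11}))$. Its Frobenius norm is $O(\|X_n\|_F \cdot \sup_{ij}|x_{ij}|\cdot\sqrt{?}\,)$-type and does not obviously vanish, so a crude Frobenius bound is insufficient; one needs that $M_n$ behaves like a rescaled noise matrix. The cleanest route is: since $X_n$ has fixed rank $r$, write $x_{ij} = \sum_{\ell=1}^r \sigma_\ell u_{\ell i} v_{\ell j}$, so $M_n = \sum_\ell \sigma_\ell D_{u_\ell} W^{(\ell)} D_{v_\ell}$ where $D_{u_\ell}, D_{v_\ell}$ are diagonal matrices of singular-vector entries and $W^{(\ell)}$ has independent centered entries with uniformly bounded moments; then $\|D_{u_\ell}\|_2 = \|u_\ell\|_\infty$, $\|D_{v_\ell}\|_2 = \|v_\ell\|_\infty$, and $\|W^{(\ell)}\|_2 = O(\sqrt{n})$ almost surely by Lemma \ref{lem:max_eig}, giving $\|M_n\|_2 = O(\sqrt{n}\,\|u_\ell\|_\infty \|v_\ell\|_\infty) \to 0$ precisely by the sufficient condition for (i) noted in Remark \ref{rem1}. (The same device dispatches the $m\geq 2$ terms, using $\|(\sqrt{n}u_\ell)^{\odot m}\|$ control.) Assembling: $n^{-1/2}f_K(\sqrt{n}X_n+Z_n) = n^{-1/2}f_K(Z_n) + (\sum_{k=1}^K a_k b_k)X_n + o(1)$ in operator norm almost surely, which is exactly the claim.
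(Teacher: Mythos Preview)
Your plan is essentially the paper's proof: Taylor expand $f_K(\sqrt{n}X_n+Z_n)=\sum_{k=1}^K\sum_{\ell=0}^k\frac{a_k}{\ell!}(\sqrt{n}X_n)^{\odot\ell}\odot q_k^{(\ell)}(Z_n)$, keep the $\ell=0$ term $f_K(Z_n)$, extract $(\sum_k a_kb_k)X_n$ from the mean of the $\ell=1$ term, and show the fluctuations and the $\ell\ge 2$ contributions vanish.

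The one weak spot is how you bound the fluctuation matrix $M_n=X_n\odot\bigl(f_K'(Z_n)-\E f_K'(z_{11})\bigr)$. Your diagonal factorization $M_n=\sum_\ell\sigma_\ell D_{u_\ell}W D_{v_\ell}$ (the same $W$ for every $\ell$, incidentally) gives $\|M_n\|_2\lesssim\sqrt{n}\sum_\ell\sigma_\ell\|u_\ell\|_\infty\|v_\ell\|_\infty$, which requires the \emph{sufficient} condition of Remark~\ref{rem1}, not assumption (i) itself; the same issue recurs in your treatment of the $\ell\ge 2$ terms via $\|(\sqrt{n}u_\ell)^{\odot m}\|$. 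The paper's route avoids this and uses only (i): for each $\ell\ge 1$ the matrix $(\sqrt{n}X_n)^{\odot\ell}\odot\bigl(q_k^{(\ell)}(Z_n)-\langle q_k^{(\ell)},1\rangle_\mu{\bf 1}_n{\bf 1}_p^\top\bigr)$ has independent mean-zero entries with variance bounded by $(\sup_{ij}|\sqrt{n}x_{ij}|)^{2\ell}\|q_k^{(\ell)}\|_\mu^2=o(1)$ uniformly, so Lemma~\ref{lem:max_eig} applied directly to this Hadamard product gives $n^{-1/2}\|\cdot\|_2\xrightarrow{a.s.}0$; the remaining deterministic piece $\langle q_k^{(\ell)},1\rangle_\mu\,n^{-1/2}(\sqrt{n}X_n)^{\odot\ell}$ for $\ell\ge 2$ is then killed by the second half of assumption (i).
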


\begin{proof}
We fix $K$ and Taylor expand $f_K(\sqrt{n}x_{ij} + z_{ij})$:
\begin{align*}
    f_K(\sqrt{n} x_{ij} + z_{ij})& = \sum_{k=1}^K a_k q_k(\sqrt{n} x_{ij} + z_{ij})  =  \sum_{k=1}^K  \sum_{\ell = 0}^k \frac{a_k}{\ell!} (\sqrt{n} x_{ij})^\ell q_k^{(\ell)}(z_{ij}).   
\end{align*}
Equivalently, 
\begin{align} \label{as10jd}
     f_K(\sqrt{n} X_n + Z_n) &=  \sum_{k=1}^K \sum_{\ell=0}^k \frac{a_k}{\ell!} (\sqrt{n} X_n)^{\odot \ell} \odot q_k^{(\ell)}(Z_n). 
\end{align}

Consider terms  with $\ell \geq 1$. As $q_k^{(\ell)}$ is a polynomial of degree $k-\ell$, $q_k^{(\ell)}(Z_n)$ is an i.i.d.\ array of variables with mean $\langle q_k^{(\ell)}, 1 \rangle_\mu$, variance $\|q_k^{(\ell)}\|_\mu^2 - \langle q_k^{(\ell)}, 1 \rangle_\mu^2$, and finite moments. Therefore,  the Hadamard product $(\sqrt{n} X_n)^{\odot \ell} \odot \big(q_k^{(\ell)}(Z_n) - \langle q_k^{(\ell)}, 1 \rangle_\mu {\bf 1}_n {\bf 1}_p^\top \big)$ has independent elements with mean zero and fourth moments uniformly converging to zero (by assumption (i)). Hence, by Lemma \ref{lem:max_eig},
\begin{gather}
 \frac{1}{\sqrt{n}} \big\| (\sqrt{n} X_n)^{\odot \ell} \odot \big(q_k^{(\ell)}(Z_n) - \langle q_k^{(\ell)}, 1 \rangle_\mu {\bf 1}_n {\bf 1}_p^\top \big) \big\|_2 \xrightarrow{a.s.} 0 . \label{47} \end{gather}
 Thus, terms of (\ref{as10jd}) indexed by $\ell \geq 1$ are dominated in operator norm by their expectations: 
 \begin{gather*}
  \frac{1}{\sqrt{n}} \Big\| f_K(\sqrt{n}X_n+Z_n) -\sum_{k=1}^K a_k \Big( q_k(Z_n) + \sum_{\ell=1}^k \frac{1}{\ell!} \langle q_k^{(\ell)} ,1 \rangle_\mu(\sqrt{n}X_n)^{\odot \ell} \Big) \Big\|_2 \xrightarrow[]{a.s.} 0.
\end{gather*}

Together with assumption (i), which implies that $n^{-1/2} \| (\sqrt{n} X_n)^{\odot \ell} \|_2 \rightarrow 0$ for $\ell \geq 2$, we find that terms of  (\ref{as10jd}) with $\ell \geq 2$  
are negligible:
\begin{align} \|Y_n - \Delta_{n,K} - A_{n,K}\|_2 =  
    \frac{1}{\sqrt{n}} \Big\|f_K(\sqrt{n} X_n + Z_n) - \sum_{k=1}^K a_k \big( q_k(Z_n) +   \langle q_k',1 \rangle_\mu  \sqrt{n} X_n \big)   \Big\|_2 &  \xrightarrow{a.s.} 0 , \label{48}
\end{align}
completing the proof. 


\end{proof}

\begin{proof}[Proof of Theorem \ref{thrm1}] 
By Lemmas \ref{lem1} and \ref{lem2},
    \begin{align} \label{49}
         \lim_{K \rightarrow \infty} \limsup_{n \rightarrow \infty} \|Y_n - A_{n,K}\|_2 \leq \lim_{K \rightarrow \infty} \limsup_{n \rightarrow \infty} \big( \|Y_n - \Delta_{n,K} - A_{n,k}\|_2 +  \|\Delta_{n,K}\|_2 \big) \stackrel{a.s.}{=} 0 .
    \end{align}
Furthermore, we have
\begin{align}
    \|A_n - A_{n,K}\|_2 \leq \Big|\tau(f,\mu)\|f\|_\mu - \sum_{k=1}^K a_k b_k \Big| \cdot \|X_n\|_2 + \frac{1}{\sqrt{n}} \|f(Z_n) - f_K(Z_n)\|_2 .
\end{align}
Using Lemmas \ref{lem:tau} and \ref{lem:max_eig0}, the first term on the right-hand side of the above equation vanishes as $K \rightarrow \infty$, while the second term satisfies 
\[ \frac{1}{\sqrt{n}} \|f(Z_n) - f_K(Z_n)\|_2 \xrightarrow{a.s.} (1+\sqrt{\gamma}) (\|f\|_\mu^2 - \|f_K\|_\mu^2)^{1/2}. \]
Since $\|f\|_\mu^2 - \|f_K\|_\mu^2 \rightarrow 0$ as $K \rightarrow \infty$ by (\ref{assumption4.0}), we obtain
\begin{align}
    \lim_{K \rightarrow \infty} \limsup_{n \rightarrow \infty} \|A_n - A_{n,K}\|_2 \stackrel{a.s.}{=} 0 .
\end{align}
Thus, 
\begin{align} \label{412}
    \lim_{n \rightarrow \infty} \|Y_n - A_n\|_2 \leq  \lim_{K \rightarrow \infty} \limsup_{n \rightarrow \infty}  \Big( \|Y_n - A_{n,K}\|_2 + \|A_n - A_{n,K}\|_2 \Big) \stackrel{a.s.}{=} 0 .
\end{align}

\end{proof}

\begin{proof}[Proof of Theorem \ref{thrm2}] The proof is a refinement of that of Theorem \ref{thrm1}. In light of the proof of Lemma \ref{lem1}, assumption (ii$\mathrm{'}$), and Lemma \ref{lem:assumption1}, introducing the matrix
\[\Delta_{n,K} \coloneqq Y_n - \frac{1}{\sqrt{n}} f_K(n^{1-1/(2\ell_*)}X_n + Z_n) ,\]
we have
 \begin{align} \label{426}
        \lim_{K \rightarrow \infty} \limsup_{n \rightarrow\infty} \|\Delta_{n,K}\|_2 \stackrel{a.s.}{=} 0 .
    \end{align}

Fixing $K$ and Taylor expanding $f_K(n^{1-1/(2\ell_*)}X_n + Z_n)$ analogously to (\ref{as10jd}),
\begin{align} \label{427}
    f_K(n^{1-1/(2\ell_*)}X_n + Z_n) = \sum_{k=1}^K \sum_{\ell =0}^k \frac{a_k}{\ell!} (n^{1-1/(2\ell_*)} X_n)^{\odot \ell} \odot q_k^{(\ell)}(Z_n) . 
\end{align}
Similarly to (\ref{47}), assumption (i$\mathrm{'}$) and Lemma \ref{lem:max_eig} imply that terms of (\ref{427}) indexed by $\ell \geq 1$ are dominated in operator norm by their expectations:
\begin{align} \label{428}
 \frac{1}{\sqrt{n}} \big\| (n^{1-1/(2\ell_*)} X_n)^{\odot \ell} \odot \big(q_k^{(\ell)}(Z_n) - b_{k\ell} {\bf 1}_n {\bf 1}_p^\top \big) \big\|_2 \xrightarrow{a.s.} 0  .
\end{align}

Since $X_n^{\odot \ell} = \sigma_1^\ell(X_n) ((u_1(X_n))^{\odot \ell})^\top (v_1(X_n))^{\odot \ell}$ is rank one, 
\begin{equation*}
    \begin{aligned}
    \|(n^{1-1/(2\ell_*)} X_n)^{\odot \ell}\|_2 &= n^{\ell - \ell/(2\ell_*)} \sigma_1^\ell(X_n) \|(u_1(X_n))^{\odot \ell}\|_2 \|(v_1(X_n))^{\odot \ell}\|_2 \\
& = n^{(1+\ell - \ell/\ell_*)/2} p^{(1-\ell)/2} \sigma_1^\ell(X_n) \cdot \frac{1}{\sqrt{n}} \|\sqrt{n} u_1(X_n)\|_{2\ell}^\ell \cdot \frac{1}{\sqrt{p}} \|\sqrt{p} v_1(X_n)\|_{2\ell}^\ell .
\end{aligned}
\end{equation*}
Thus, using assumption (i$\mathrm{'}$) and $p/n \rightarrow \gamma$, we have $n^{-1/2}\|(n^{1-1/(2\ell_*)} X_n)^{\odot \ell}\|_2 \rightarrow 0$ for $\ell > \ell_*$ and
\begin{align}
    \frac{1}{\sqrt{n}}\big\|(n^{1-1/(2\ell_*)} X_n)^{\odot \ell_*}\big\|_2 \rightarrow \gamma^{(1-\ell_*)/2} \sigma_1^{\ell_*}(X_n) \sqrt{m_{2\ell_*}^u  m_{2\ell_*}^v} . \label{430} 
\end{align}
Together with (\ref{428}), this yields the approximation
\begin{align} \label{queermo}
    \frac{1}{\sqrt{n}} \Big\| f_K(n^{1-1/(2\ell_*)}X_n + Z_n) - \sum_{k=1}^K \sum_{\ell = 1}^{k \wedge \ell_*} \frac{a_k b_{k\ell}}{\ell!} (n^{1-1/(2\ell_*)}X_n)^{\odot \ell} - f_K(Z_n) \Big\|_2 \xrightarrow{a.s.} 0 .
\end{align}

Rewriting the second term of the above equation,
\begin{align*}
    \sum_{k=1}^K \sum_{\ell = 1}^{k \wedge \ell_*} \frac{a_k b_{k\ell}}{\ell!} (n^{1-1/(2\ell_*)}X_n)^{\odot \ell} = \sum_{\ell=1}^{\ell_*} \sum_{k = \ell}^{K} \frac{a_k b_{k\ell}}{\ell!} (n^{1-1/(2\ell_*)}X_n)^{\odot \ell} ,
\end{align*}
recall that  $\sum_{k=\ell }^K a_k b_{k\ell} = 0$ for $\ell < \ell_*$ and sufficiently large $K$ by assumption (iv$\mathrm{'}$).\footnote{To avoid assumption  (iv$\mathrm{'}$), for $\ell < \ell_*$, we would need a bound on the convergence rate of $\sum_{k=\ell}^K a_k b_{k\ell}$ to zero and to argue that there exists $K_n \rightarrow \infty$ such that $\sum_{k=1}^{K_n} a_k b_{k\ell} \cdot \|(n^{1-1/(2\ell_*)} X_n)^{\odot \ell}\|_2 \rightarrow 0$. Analogs of (\ref{426})--(\ref{queermo}) that permit the degree $K_n$ of the polynomial approximation to diverge would also be required. We leave this analysis for future work. } Thus, 
\begin{gather} \label{poiu}
\frac{1}{\sqrt{n}} \Big\| f_K(n^{1-1/(2\ell_*)}X_n + Z_n) - \sum_{k=1}^K \frac{a_k   b_{k\ell_*}}{\ell_*!} (n^{1-1/(2\ell_*)}X_n)^{\odot \ell_*} - f_K(Z_n) \Big\|_2 \xrightarrow{a.s.} 0 .
\end{gather}
From (\ref{426}), (\ref{430}), (\ref{poiu}), and calculations analogous to (\ref{49})--(\ref{412}), we obtain 
\begin{gather}
 \lim_{n \rightarrow \infty} \|Y_n- A_{\ell_*,n}\|_2 \xrightarrow[]{a.s.} 0 ,
\end{gather}
completing the proof.

\end{proof}

\appendix

\section{Appendix} \label{appendix}

\subsection{Auxiliary Lemmas}
\begin{lemma} \label{lem:tau}
    Under assumptions (ii)--(iv), $\sum_{k=1}^\infty a_k b_k$ converges. 
\end{lemma}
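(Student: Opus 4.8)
The plan is to recognize $\sum_{k=1}^{K}a_kb_k$ as a concrete $x\to 0$ limit and then read off convergence as $K\to\infty$ directly from assumption (\ref{assumption4.1}), which is precisely the hypothesis engineered to make $\tau(f,\mu)$ well defined.

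First I would Taylor-expand each orthogonal polynomial about the point $z$. Since $q_k$ has degree $k$, the identity $q_k(x+z)=\sum_{\ell=0}^{k}\tfrac{x^\ell}{\ell!}\,q_k^{(\ell)}(z)$ is exact. Integrating against $\mu$ term by term (legitimate, since the sum is finite and $\mu$ has finite moments by assumption (ii)) and using that $q_0$ is a nonzero constant, so that $\int q_k\,d\mu$ is proportional to $\langle q_k,q_0\rangle_\mu=0$ for $k\ge 1$, only the terms with $\ell\ge 1$ survive:
\[
\int q_k(x+z)\,d\mu(z)=x\,b_k+\sum_{\ell=2}^{k}\frac{x^\ell}{\ell!}\,\langle q_k^{(\ell)},1\rangle_\mu .
\]
Multiplying by $a_k$, summing over $1\le k\le K$, dividing by $x$, and letting $x\to 0$ — the remaining double sum over $\ell\ge 2$ is finite and, after division by $x$, each term carries a power $x^{\ell-1}$ with $\ell-1\ge 1$, hence vanishes — yields
\[
\lim_{x\to 0}\frac{1}{x}\int f_K(x+z)\,d\mu(z)=\sum_{k=1}^{K}a_k b_k .
\]

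Next I would combine this with (\ref{assumption4.1}). For the iterated limit there to be well posed, the inner limit $\lim_{x\to 0}\tfrac1x\int\big(f(x+z)-f_K(x+z)\big)\,d\mu(z)$ must exist for all large $K$; since the $f_K$-contribution has the limit just computed, it follows that $L\coloneqq\lim_{x\to 0}\tfrac1x\int f(x+z)\,d\mu(z)$ exists and is finite, and that
\[
\lim_{x\to 0}\frac{1}{x}\int\big(f(x+z)-f_K(x+z)\big)\,d\mu(z)=L-\sum_{k=1}^{K}a_k b_k .
\]
Assumption (\ref{assumption4.1}) states exactly that the left-hand side tends to $0$ as $K\to\infty$, so the partial sums $\sum_{k=1}^{K}a_kb_k$ converge (to $L$), which is the claim. (If one prefers to read the inner limit in (\ref{assumption4.1}) as a $\limsup$, the same computation shows the partial sums are Cauchy, giving the same conclusion.)

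I do not expect a serious obstacle: the lemma is essentially a restatement of assumption (\ref{assumption4.1}) once the Taylor-expansion-plus-orthogonality computation is in place. The only points needing minor care are the vanishing of $\int q_k\,d\mu$ for $k\ge 1$, the interchange of the finite sum with the integral, and the observation that well-posedness of the iterated limit forces $L$ to exist and be finite; none is substantive. It is worth noting that one cannot in general shortcut this via Cauchy–Schwarz from $\sum_k a_k^2=\|f\|_\mu^2<\infty$, since $\sum_k b_k^2$ is a Fisher-information-type quantity that need not be finite — which is exactly why assumption (\ref{assumption4.1}) is imposed rather than a square-summability condition on $\{b_k\}$.
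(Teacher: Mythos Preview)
Your proof is correct and follows essentially the same approach as the paper: both identify $\lim_{x\to 0}\tfrac{1}{x}\int f_K(x+z)\,d\mu(z)=\sum_{k=1}^{K}a_kb_k$ via Taylor expansion and orthogonality, then invoke assumption (\ref{assumption4.1}). The only cosmetic difference is that you extract the limit $L=\lim_{x\to 0}\tfrac{1}{x}\int f(x+z)\,d\mu(z)$ directly and show $\sum_{k\le K}a_kb_k\to L$, whereas the paper compares $f_K$ with $f_L$ and argues the partial sums are Cauchy; your version is arguably cleaner, and your parenthetical about the $\limsup$ reading covers exactly the paper's Cauchy variant.
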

\begin{proof} By (\ref{assumption4.1}),
    \begin{align*}
        \lim_{K \rightarrow \infty} \lim_{L \rightarrow \infty} \lim_{x \rightarrow 0}  \bigg|  \frac{1}{x} \int (f_K(x+z) - f_L(x+z)) d \mu(z)  \bigg|  \leq 2\lim_{K \rightarrow \infty} \lim_{x \rightarrow 0}  \bigg| 
 \frac{1}{x}  \int (f(x+z) - f_K(x+z)) d\mu(z) \bigg|= 0 . 
    \end{align*}
On the other hand, 
\begin{align}
    \nonumber &~ \lim_{x \rightarrow 0} \frac{1}{x} \int (f_K(x+z) - f_L(x+z)) d \mu(z) \\  = &~   \lim_{x \rightarrow 0} \frac{1}{x} \int \big(f_K(x+z) - f_K(z) - f_L(x+z) + f_L(z)\big) d \mu(z)  \\
    = &~     \int (f_K'(z) - f_L'(z)) d \mu(z) = \sum_{k = K \wedge L}^{K \vee L}  a_k b_k ,  \nonumber
\end{align}
where  the first equality holds as $\E q_k(z_{11}) = 0$ for $k > 1$ and the second holds by the dominated convergence theorem. Thus,
\begin{align}
     \lim_{K \rightarrow \infty} \sum_{k=K}^\infty a_k b_k = 0 ,
\end{align}
completing the proof.
\end{proof}

\begin{lemma}\label{lem:tau_f'}
    Let assumptions (ii)--(iv) hold, $f$ be differentiable, $\mu$ have differentiable density $\omega$,  and $\omega'/\omega$ be polynomially bounded. Then, $\tau(f,\mu)= \|f\|_\mu^{-1} \E f'(z_{11})$. 
\end{lemma}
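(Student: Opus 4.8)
The plan is to show that $\tau(f,\mu)\|f\|_\mu = \sum_{k=1}^\infty a_k b_k$ equals $\E f'(z_{11}) = \int f'(z)\,\omega(z)\,dz$, under the stated regularity. The starting point is the definition $b_k = \langle q_k', 1\rangle_\mu = \int q_k'(z)\,\omega(z)\,dz$. Integration by parts — valid because $\omega$ is differentiable with full support and $q_k'$ is a polynomial, so boundary terms vanish provided $\omega$ decays faster than any polynomial, which follows from $\mu$ having a finite moment generating function near zero (this is implicit via assumption (ii), though one may need the slightly stronger MGF hypothesis) — gives
\[
b_k = -\int q_k(z)\,\omega'(z)\,dz = -\int q_k(z)\,\frac{\omega'(z)}{\omega(z)}\,\omega(z)\,dz = -\Big\langle q_k, \tfrac{\omega'}{\omega}\Big\rangle_\mu .
\]
Since $\omega'/\omega$ is polynomially bounded and $\mu$ has all moments, $\omega'/\omega \in L^2(\mathbb{R},\mu)$; write $g \coloneqq -\omega'/\omega$, so $b_k = \langle q_k, g\rangle_\mu$ is the $k$-th Fourier–orthogonal-polynomial coefficient of $g$.

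Next I would combine this with the coefficients of $f$. We have $a_k = \langle f, q_k\rangle_\mu$, so $\sum_{k=1}^\infty a_k b_k = \sum_{k=1}^\infty \langle f, q_k\rangle_\mu \langle g, q_k\rangle_\mu$. If $g$ admitted a convergent orthogonal-polynomial expansion in $L^2(\mu)$ (i.e., the analog of (\ref{assumption4.0}) for $g$), Parseval would give $\sum_{k\ge 1} a_k b_k = \langle f, g\rangle_\mu - a_0 b_0$; with $a_0 = 0$ this is $\langle f, g\rangle_\mu = \int f(z)\big(-\omega'(z)/\omega(z)\big)\omega(z)\,dz = -\int f(z)\,\omega'(z)\,dz$. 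A second integration by parts (again boundary terms vanish by the decay of $\omega$ and polynomial boundedness of $f$, using that $f$ is differentiable) yields $-\int f\,\omega' = \int f'\,\omega = \E f'(z_{11})$, which is the claim after dividing by $\|f\|_\mu$.

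The main obstacle is justifying the Parseval step without assuming completeness of $\{q_k\}$ for $g$ outright. I would handle it by truncation: for the partial sum $f_K = \sum_{k=1}^K a_k q_k$, one has $\int f_K'(z)\,d\mu(z) = \sum_{k=1}^K a_k b_k$ exactly (term by term, since $q_k' $ integrates to $b_k$), and by the formula (\ref{tau_def}) the left side converges to $\tau(f,\mu)\|f\|_\mu$. On the other hand, $\int f_K'(z)\,\omega(z)\,dz = -\int f_K(z)\,\omega'(z)\,dz = \int f_K(z)\,g(z)\,\omega(z)\,dz = \langle f_K, g\rangle_\mu$, and since $f_K \to f$ in $L^2(\mu)$ by (\ref{assumption4.0}) and $g \in L^2(\mu)$, Cauchy–Schwarz gives $\langle f_K, g\rangle_\mu \to \langle f, g\rangle_\mu = \int f' \omega$ (the last equality by the integration-by-parts argument applied to $f$ itself, which is where differentiability of $f$ and the decay/polynomial-growth bounds are used). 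Equating the two limits gives $\tau(f,\mu)\|f\|_\mu = \E f'(z_{11})$. The only genuinely delicate points are the two integration-by-parts steps, where I must check that $f_K(z)\,\omega(z) \to 0$ and $f(z)\,\omega(z)\to 0$ as $|z|\to\infty$ — both follow since $\omega$ has a finite MGF near $0$ (forcing sub-exponential, hence faster-than-polynomial, tail decay) while $f$ and each $f_K$ grow at most polynomially — and the analogous statement for $f'_K\,\omega$, $f'\,\omega$.
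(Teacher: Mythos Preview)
Your proposal is correct and follows essentially the same route as the paper. Both arguments integrate by parts to rewrite $\int f_K'\,\omega$ (or $\int(f'-f_K')\,\omega$) as an inner product against $\omega'/\omega\in L^2(\mu)$, then invoke Cauchy--Schwarz together with $f_K\to f$ in $L^2(\mu)$ from (\ref{assumption4.0}); the only cosmetic difference is that the paper bounds $\bigl|\int(f'-f_K')\,\omega\bigr|$ in a single step, whereas you perform two separate integrations by parts (one for $f_K$, one for $f$) and pass through the inner product $\langle f_K,g\rangle_\mu\to\langle f,g\rangle_\mu$.
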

\begin{proof}
Using integration by parts and the Cauchy-Schwarz inequality,
\begin{equation}
\begin{aligned}
 \bigg| \int_{-\infty}^\infty (f'(z) - f_K'(z)) \omega(z) dz \bigg|  &=    \bigg| \int_{-\infty}^\infty (f(z) - f_K(z)) \omega'(z) dz \bigg| \\
    & \leq \bigg( \int_{-\infty}^\infty (f(z) - f_K(z))^2 \omega(z) dz \bigg)^{1/2} \bigg( \int_{-\infty}^\infty \frac{(\omega'(z))^2}{\omega(z)} dz \bigg)^{1/2} .  
\end{aligned}
\end{equation}
The right-hand side vanishes as $K \rightarrow \infty$ by (\ref{assumption4.0}) and assumption (ii), implying 
\[\tau(f,\mu) = \frac{1}{\|f\|_\mu}\lim_{K \rightarrow \infty} \E f_K'(z_{11}) =  \frac{1}{\|f\|_\mu} \E f'(z_{11}) . \]
\end{proof}

\begin{lemma} \label{lem:assumption1}
   If  $\mu$ has differentiable density $\omega$, $\text{supp}(\omega) = \mathbb{R}$, and $\omega'/\omega$ polynomially bounded, assumptions (iii) and (\ref{assumption4.0}) imply (\ref{assumption4.1}):
           \begin{align*}
        \lim_{K \rightarrow \infty} \lim_{x \rightarrow 0}    \frac{1}{x} \int ( f(x+z) - f_K(x+z)) \omega(z) dz= 0.    
    \end{align*}
\end{lemma}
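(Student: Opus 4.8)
\textbf{Proof proposal for Lemma \ref{lem:assumption1}.}

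The plan is to reduce the claim to a single application of integration by parts followed by the Cauchy--Schwarz inequality, exactly in the spirit of Lemma \ref{lem:tau_f'}, and then to justify swapping the $x\to 0$ limit with the integral. Write $g_K \coloneqq f - f_K$, which lies in $L^2(\mathbb{R},\mu)$ by assumption (iii) and (\ref{assumption4.0}). For fixed $x \neq 0$, the inner quantity is $\frac1x\int g_K(x+z)\,\omega(z)\,dz = \frac1x\int g_K(w)\,\omega(w-x)\,dw$ after the substitution $w = x+z$. Since $\int g_K(w)\omega(w)\,dw = a_0 - a_0 = 0$ (using $a_0 = 0$ and orthogonality of the $q_k$ to the constant $1$), we may subtract this to obtain
\[
\frac1x\int g_K(x+z)\,\omega(z)\,dz = \int g_K(w)\,\frac{\omega(w-x) - \omega(w)}{x}\,dw .
\]
Now the differentiability of $\omega$ gives the pointwise limit $\bigl(\omega(w-x)-\omega(w)\bigr)/x \to -\omega'(w)$ as $x \to 0$, so formally the expression converges to $-\int g_K(w)\,\omega'(w)\,dw$.

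\emph{First} I would make this limit exchange rigorous. The natural route is dominated convergence: by the mean value theorem, $\bigl(\omega(w-x)-\omega(w)\bigr)/x = -\omega'(w - \theta x)$ for some $\theta = \theta(w,x) \in (0,1)$, so one needs an integrable-against-$g_K$ majorant for $\omega'(w-\theta x)$ uniformly over small $x$. Using $|\omega'| = |\omega'/\omega|\cdot \omega$ and the polynomial bound $|\omega'(z)/\omega(z)| \le P(z)$ for some polynomial $P$, together with the fact that $\mu$ (hence $\omega$) has finite moments, one controls $\int |g_K(w)|\,\omega'(w-\theta x)\,dw$ by $\bigl(\int g_K^2\,\omega\bigr)^{1/2}\bigl(\int \omega'(w-\theta x)^2/\omega(w)\,dw\bigr)^{1/2}$; the remaining factor is handled by a change of variables $w \mapsto w + \theta x$ plus a mild local-uniformity estimate on $\omega(w+\theta x)/\omega(w)$ for $|x|$ bounded (this is where $\text{supp}(\omega) = \mathbb{R}$ and the polynomial bound on $\omega'/\omega$—which controls the log-derivative and hence the ratio—enter). \emph{Then}, having identified
\[
\lim_{x\to 0}\frac1x\int g_K(x+z)\,\omega(z)\,dz = -\int \bigl(f(w) - f_K(w)\bigr)\,\omega'(w)\,dw ,
\]
I would bound the right side by Cauchy--Schwarz:
\[
\Bigl|\int (f - f_K)\,\omega'\,dw\Bigr| \le \Bigl(\int (f - f_K)^2\,\omega\,dw\Bigr)^{1/2}\Bigl(\int \frac{\omega'(w)^2}{\omega(w)}\,dw\Bigr)^{1/2},
\]
where the second factor is finite because $\omega'/\omega$ is polynomially bounded and $\omega$ has finite moments, and the first factor tends to $0$ as $K \to \infty$ by (\ref{assumption4.0}). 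This yields the claim.

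The main obstacle is the limit-exchange step: one must produce an $x$-uniform dominating function, and the subtlety is that shifting the argument of $\omega'$ could in principle destroy integrability. The polynomial bound on $\omega'/\omega$ is the right hypothesis because it says $\log\omega$ has at-most-polynomially-growing derivative, so $\omega(w-x)/\omega(w) = \exp\bigl(-\int_0^{x}(\omega'/\omega)(w - t)\,dt\bigr)$ grows at most like $\exp(c|x|\,\mathrm{poly}(w))$; restricting to $|x| \le 1$ and absorbing this into the finite moments of $\mu$ (which dominate any polynomial or even subexponential-in-$w$ growth once paired with $|g_K| \in L^2(\mu)$—here one may also invoke the finite moment generating function hypothesis if needed) makes domination work. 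Everything else is a routine repetition of the argument already used for Lemma \ref{lem:tau_f'}.
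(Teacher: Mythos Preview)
Your approach shares all of the paper's ingredients---substitute $w=x+z$, subtract the zero quantity $\int g_K\,\omega$, then combine Cauchy--Schwarz with the mean value theorem and the polynomial bound on $\omega'/\omega$---but you apply Cauchy--Schwarz and the $x\to 0$ limit in the opposite order. The paper applies Cauchy--Schwarz \emph{first}, directly to $\int g_K(w)\tfrac{\omega(w-x)-\omega(w)}{x}\,dw$, which splits off the factor $\|f-f_K\|_{L^2(\mu)}$ immediately and leaves only the $K$-independent quantity $\int \tfrac{(\omega(w-x)-\omega(w))^2}{x^2\omega(w)}\,dw$ to control as $x\to 0$. This is strictly cleaner: no $g_K$-dependent domination is ever needed, and one does not even need to identify the inner limit---any finite bound on the $\omega$-only integral as $x\to 0$ suffices.

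In your ordering, the dominated-convergence step acquires a genuine gap. First, the proposed change of variables $w\mapsto w+\theta x$ is not valid: $\theta=\theta(w,x)$ depends on $w$, so this is not a change of variables at all. Second, the fallback you suggest---absorbing the factor $\exp\bigl(c|x|\,\mathrm{poly}(w)\bigr)$ coming from $\omega(w-\theta x)/\omega(w)$ into ``the finite moment generating function hypothesis if needed''---is not available here: Lemma~\ref{lem:assumption1} assumes only finite moments (assumption (ii)), not a finite MGF, so a pointwise majorant of the form $|g_K(w)|\,Q(w)\,e^{R(w)}\omega(w)$ need not be integrable. Swapping the order as the paper does eliminates this difficulty entirely, since the surviving integral $\int (\omega')^2/\omega$ is finite directly from $\omega'/\omega\in L^2(\mu)$.
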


  \begin{proof}
    Using $\E(f(z_{11}) - f_K(z_{11})) = 0$ and the Cauchy-Schwarz inequality, we have
\begin{align}
    \int_{-\infty}^\infty (f(x+z) - f_K(x+z)) \omega(z) dz &=    \int_{-\infty}^\infty (f(z) - f_K(z)) \omega(-x+z)dz \nonumber  \\
    &=     \int_{-\infty}^\infty (f(z) - f_K(z)) (\omega(-x+z)-\omega(z)) dz \\
    & \leq \bigg(\int_{-\infty}^\infty (f(z) - f_K(z))^2 \omega(z) dz \bigg)^{1/2} \bigg( \int_{-\infty}^\infty \frac{( \omega(-x+z) - \omega(z))^2}{\omega(z)} dz\bigg)^{1/2}.
\nonumber \end{align}
By the mean value theorem, the polynomial boundedness of $\omega'/\omega$, and the dominated convergence theorem, 
\begin{align} 
 \lim_{x \rightarrow 0}  \int_{-\infty}^\infty \frac{( \omega(-x+z) - \omega(z))^2}{x^2 \omega(z)} dz = \int_{-\infty}^\infty \frac{(\omega'(z))^2}{\omega(z)} dz < \infty . 
\end{align}

Thus,
\begin{align}
\lim_{x \rightarrow 0} \bigg| \frac{1}{x} \int_{-\infty}^\infty (f(x+z) - f_K(x+z)) \omega(z) dz \bigg| \leq \bigg(\sum_{k=K+1}^\infty a_k^2 \bigg)^{1/2} \bigg( \int_{-\infty}^\infty \frac{(\omega'(z))^2}{\omega(z)} dz \bigg)^{1/2} .
\end{align}
The claim now follows from (\ref{assumption4.0}).  
  \end{proof}

\begin{lemma}\label{lem:tau2}
    Under assumptions (ii$\mathrm{'}$) and (iii$\mathrm{'}$), $|\tau_\ell(f,\mu)| < \infty$ for $\ell \in \mathbb{N}$. 
\end{lemma}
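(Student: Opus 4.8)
The plan is to prove that the series $\sum_{k=\ell}^\infty a_k b_{k\ell}$ converges absolutely; since assumptions (ii$'$)--(iii$'$) guarantee $0 < \|f\|_\mu < \infty$, this immediately gives $|\tau_\ell(f,\mu)| < \infty$, and the identity $\tau_\ell(f,\mu) = \|f\|_\mu^{-1}\lim_{K\to\infty}\int f_K^{(\ell)}\,d\mu$ follows from $\int f_K^{(\ell)}\,d\mu = \sum_{k=\ell}^K a_k b_{k\ell}$ (the terms $k<\ell$ drop out since $q_k^{(\ell)}\equiv 0$). I would control the two factors separately: $\{a_k\}$ through Bessel's inequality for $f$, and $\{b_{k\ell}\}$ through Bessel's inequality for an auxiliary $L^2(\mu)$ function produced by integration by parts --- the same mechanism used in Lemma \ref{lem:tau_f'}.

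First, $f$ is polynomially bounded by (iii$'$) and $\mu$ has finite moments of all orders (it has a finite moment generating function near $0$ by (ii$'$)), so $f\in L^2(\mathbb{R},\mu)$ and Bessel's inequality gives $\sum_{k=1}^\infty a_k^2 \le \|f\|_\mu^2 < \infty$.

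The key step is the identity
\[
b_{k\ell} = \langle q_k^{(\ell)},1\rangle_\mu = \int q_k^{(\ell)}(z)\,\omega(z)\,dz = (-1)^\ell\int q_k(z)\,\omega^{(\ell)}(z)\,dz = (-1)^\ell\Big\langle q_k,\tfrac{\omega^{(\ell)}}{\omega}\Big\rangle_\mu ,
\]
valid for $\ell\le\ell_*$. To justify the $\ell$-fold integration by parts I would show that every boundary term --- a quantity of the form $\lim_{|z|\to\infty}p(z)\,\omega^{(j)}(z)$ with $p$ a polynomial and $0\le j\le\ell-1$ --- vanishes. Since $|\omega^{(i)}/\omega|$ is polynomially bounded for $i\in\{1,\dots,\ell\}$ by (ii$'$) and $\mu$ has finite moments, both $z\mapsto p(z)\,\omega^{(j)}(z)$ and its derivative $p'\omega^{(j)}+p\,\omega^{(j+1)}$ are absolutely integrable on $\mathbb{R}$; hence $p\,\omega^{(j)}$ has limits at $\pm\infty$, and integrability forces those limits to be $0$. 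Working up from $j=0$ keeps every intermediate integral absolutely convergent, so the identity holds.

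Finally, $\omega^{(\ell)}/\omega$ is polynomially bounded and $\mu$ has finite moments, so $\omega^{(\ell)}/\omega\in L^2(\mathbb{R},\mu)$; the $b_{k\ell}$ are thus, up to sign, its generalized Fourier coefficients with respect to the orthonormal system $\{q_k\}$, and Bessel's inequality gives $\sum_{k\ge\ell}b_{k\ell}^2 \le \|\omega^{(\ell)}/\omega\|_\mu^2 < \infty$. Cauchy--Schwarz then yields $\sum_{k=\ell}^\infty|a_k b_{k\ell}|\le \big(\sum_k a_k^2\big)^{1/2}\big(\sum_{k\ge\ell}b_{k\ell}^2\big)^{1/2}<\infty$, as required. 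The main obstacle is the integration-by-parts step: giving a clean justification that the boundary contributions vanish using only the polynomial-growth hypothesis on $\omega^{(\ell)}/\omega$, rather than explicit tail decay of $\omega$ --- the ``absolutely integrable, with an integrable derivative, hence vanishing at infinity'' argument above is the cleanest route I see. (When $\ell_*<\infty$ the statement is only needed for $\ell\le\ell_*$, which is exactly the range covered by (ii$'$); when $\ell_*=\infty$, (ii$'$) covers all $\ell\in\mathbb{N}$.)
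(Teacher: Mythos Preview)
Your proposal is correct and follows essentially the same route as the paper's proof: integration by parts to identify $b_{k\ell}$ (up to sign) as the $q_k$-coefficient of $\omega^{(\ell)}/\omega$, square-summability of $\{b_{k\ell}\}$ via $\omega^{(\ell)}/\omega\in L^2(\mu)$, and Cauchy--Schwarz against $\{a_k\}$. Your treatment is in fact slightly more careful than the paper's --- you track the $(-1)^\ell$ sign, you give a cleaner justification for the vanishing of the intermediate boundary terms $p\,\omega^{(j)}$ (the paper only remarks that $z^k\omega(z)\to 0$), and you correctly flag that assumption (ii$'$) supplies the needed polynomial bound on $\omega^{(\ell)}/\omega$ only for $\ell\le\ell_*$.
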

\begin{proof}
Since $\omega$ has finite moments, $z^k \omega(z) \rightarrow 0$ as $|z| \rightarrow \infty$. Using integration by parts,
\begin{align}
   b_{k\ell} = \int_{-\infty}^\infty  q_k^{(\ell)}(z) \omega(z) dz = (-1)^\ell \int_{-\infty}^\infty q_k(z) \omega^{(\ell)}(z) dz .
\end{align}
Thus, $b_{k\ell}$ is the projection of the function $(-1)^\ell \omega^{(\ell)}/\omega$ onto $q_k$.
Since this function is polynomially bounded, we have $\omega^{(\ell)}/\omega \in L^2(\mathbb{R},\mu)$, and Lemma \ref{lem:ortho2} therefore implies 
\begin{align}
    \sum_{k=1}^\infty b_{k\ell}^2 = \int_{-\infty}^\infty \frac{(\omega^{(\ell)}(z))^2}{\omega(z)} dz < \infty . 
\end{align}
The claim now follows from the Cauchy-Schwarz inequality: 
\[ |\tau_\ell(f,\mu)| = \frac{1}{\|f\|_\mu} \bigg| \sum_{k=1}^\infty a_k b_{k\ell} \bigg| \leq  \frac{1}{\|f\|_\mu} \bigg( \sum_{k=1}^\infty a_k^2 \cdot \sum_{k=1}^\infty b_{k\ell}^2 \bigg)^{1/2}  =  \bigg( \sum_{k=1}^\infty b_{k\ell}^2 \bigg)^{1/2}.
\]
\end{proof}

\subsection{Proof of Corollaries}

We first prove Corollary \ref{cor1}; the proof of Corollary \ref{cor2} is similar and omitted.  
\begin{proof}[Proof of (\ref{g27v})]

 As $f(Z_n)$ is an i.i.d.\ array of variables with mean zero, variance $\|f\|_\mu^2$, and finite moments ($f$ is polynomially bounded), the ESD of $\|f\|_\mu^{-2} A_n^\top A_n$ converges by Lemma \ref{lem:bgn12} almost surely weakly to the Marchenko--Pastur law with parameter $\gamma$. The LSD of $\|f\|_\mu^{-2} Y_n^\top Y_n$ is identical by Lemma 2.1 of \cite{ElK}.
 
 Equations (\ref{qsd1}) and (\ref{bgn12_1}) yield expressions for the limiting singular value bias and singular vector inconsistency of $A_n$:
\begin{align} \label{g29v}
&&  \hspace{3.8cm}\sigma_i^2(A_{n}) \xrightarrow[]{a.s.} \lambda(\tau(f,\mu) \sigma_i(X_n),\gamma), \hspace{.25cm} && \hspace{.9cm} 1 \leq i \leq r, 
\end{align}
\begin{equation}
 \begin{aligned}  \label{g30v} \hspace{3.75cm}
    \langle u_i(X_n), u_j(A_n) \rangle^2 & \xrightarrow{a.s.} \delta_{ij} \cdot c_1^2 (\tau(f,\mu) \sigma_i(X_n), \gamma) , \\  \langle v_i(X_n), v_j(A_n) \rangle^2 & \xrightarrow{a.s.} \delta_{ij} \cdot c_2^2 (\tau(f,\mu)\sigma_i(X_n), \gamma) ,  
\end{aligned}   \hspace{1.45cm} 1\leq i,j \leq r .    
\end{equation}
By Weyl's inequality and Theorem \ref{thrm1},
\begin{align} \label{g31v}
     |\sigma_i(Y_n) - \sigma_i(A_n)| \leq  
 \|Y_n - A_{n} \|_2 \xrightarrow{a.s.} 0 .
\end{align}
Equation (\ref{g27v}) follows from (\ref{g31v}) and the bound
\begin{align*}
|\sigma_i^2(Y_n) - \sigma_i^2(A_n) | & = |\sigma_i(Y_n) - \sigma_i(A_n)|  | \sigma_i(Y_n) + \sigma_i(A_n)| \\
 &\leq |\sigma_i(Y_n) - \sigma_i(A_n)| (2 |\sigma_i(A_n)| +  |\sigma_i(Y_n) - \sigma_i(A_n)| ) .
\end{align*}
\end{proof}

To complete the proof of Corollary \ref{cor1}, it remains to prove (\ref{g28v}). We note that for supercritical singular values, $\tau(f,\mu) \sigma_i(X_n) > \gamma^{1/4}$, the convergence $\langle v_i(X_n), v_i(Y_n) \rangle^2 \xrightarrow{a.s.} c_2^2(\tau(f,\mu) \sigma_i(X_n), \gamma)$ is an immediate consequence of (\ref{g27v}) and the Davis-Kahan theorem (Theorem 4 of \cite{YWS14}).  However, for subcritical singular values,  $\tau(f,\mu) \sigma_i(X_n) \leq \gamma^{1/4}$, asymptotic cosine similarities cannot be similarly derived: as $\sigma_i^2(A_n) \xrightarrow{a.s.}(1+\sqrt{\gamma})^2$, the upper bulk edge of the Marchenko--Pastur law, $\sigma_i^2(A_n)$ does not satisfy the eigenvalue separation condition of the Davis-Kahan theorem.  

Define the Stieltjes transform $m_\gamma(z)$ of the  Marchenko--Pastur law with parameter $\gamma$:
\begin{align*} &&
 m_\gamma(z) \coloneqq - \frac{z - 1+ \gamma - \sqrt{(z - 1 - \gamma)^2 - 4 \gamma}}{2 \gamma z}  , &&  \gamma \in (0,1],
\end{align*}
and $m_{\gamma^{-1}}(z) = \gamma m_\gamma(z) - (1-\gamma) z^{-1}$.

\begin{lemma} \label{lem:zxcv} For $A \in \mathbb{R}^{n \times p}$ and $z \in \mathbb{C}^+$,
    \[ A(zI_p - A^\top A)^{-1} A^\top = -I_n + z (z I_n-A A^\top)^{-1} . \]
\end{lemma}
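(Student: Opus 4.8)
The plan is to verify the identity by the standard \emph{push-through} (or resolvent) trick, avoiding any appeal to the singular value decomposition. First I would note that since $z \in \mathbb{C}^+$ has strictly positive imaginary part while $A^\top A$ and $AA^\top$ are real symmetric (hence have real spectrum), both $zI_p - A^\top A$ and $zI_n - AA^\top$ are invertible, so all expressions are well defined.

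Next, rather than proving the displayed equality directly, I would rearrange it to the equivalent statement
\[
(zI_n - AA^\top)\bigl[\, I_n + A(zI_p - A^\top A)^{-1}A^\top \,\bigr] = z I_n ,
\]
obtained by multiplying both sides on the left by $zI_n - AA^\top$. Expanding the left-hand side gives $zI_n - AA^\top + (zI_n - AA^\top)A(zI_p - A^\top A)^{-1}A^\top$, so it suffices to show
\[
(zI_n - AA^\top)\,A\,(zI_p - A^\top A)^{-1}A^\top = AA^\top .
\]
The key algebraic observation is the identity $(zI_n - AA^\top)A = zA - AA^\top A = A(zI_p - A^\top A)$. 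Substituting this in, the factor $(zI_p - A^\top A)$ cancels against its inverse, leaving $A A^\top$, as required. Reversing the manipulation (multiplying back by $(zI_n - AA^\top)^{-1}$) yields the claimed formula.

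There is essentially no obstacle here: the only point requiring a word of care is the invertibility of the two shifted matrices, which follows immediately from $z \notin \mathbb{R}$, and the cancellation step, which is purely formal once the push-through identity $(zI_n - AA^\top)A = A(zI_p - A^\top A)$ is in hand. Alternatively, one could record the same proof via the block matrix $\begin{pmatrix} zI_n & A \\ A^\top & I_p \end{pmatrix}$ and its two Schur complements, but the direct computation above is shorter and I would present that.
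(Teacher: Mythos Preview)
Your proof is correct. It differs from the paper's argument, which proceeds via the singular value decomposition: the paper writes $A(zI_p - A^\top A)^{-1}A^\top = \sum_{i=1}^{n\wedge p} \frac{\sigma_i^2}{z-\sigma_i^2}\, u_i u_i^\top$, rewrites each summand as $\frac{z}{z-\sigma_i^2}-1$, and then completes the sum to a full orthonormal basis of $\mathbb{R}^n$ (using $\sigma_i=0$ for $i>n\wedge p$) to recognize $-I_n + z(zI_n-AA^\top)^{-1}$. Your route via the push-through identity $(zI_n-AA^\top)A = A(zI_p-A^\top A)$ is more elementary in that it avoids any spectral decomposition and works over any field or ring where the inverses exist; it also extends verbatim to non-diagonalizable settings. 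The SVD proof, on the other hand, makes the spectral content of the identity transparent, which is natural given how the lemma is used downstream in the paper (computing quadratic forms in resolvents). Either argument is entirely adequate here.
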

\begin{proof} This is a particular case of the Woodbury identity. 
\end{proof}

\begin{lemma} \label{lema5}
Let $Z_n \coloneqq (z_{ij} : 1 \leq i \leq n, 1 \leq j \leq p)$ denote an array of i.i.d.\ real random variables with mean zero, variance one, and finite moments. Let $S_n \coloneqq n^{-1} Z_n^\top Z_n$ 
and $V_n$ and $W_n$ be deterministic semi-orthogonal matrices of size $p \times r$ and $p \times (p-r)$, respectively, satisfying $V_n^\top W_n = 0$.
As $n \rightarrow \infty$ and $p/n \rightarrow \gamma \in (0,1]$,
\begin{align}
     && V_n^\top S_n W_n (zI_{p-r} - W_n^\top S_n W_n)^{-1} W_n^\top S_n V_n \xrightarrow{a.s.} (-z \gamma m_\gamma(z) + 1 - \gamma) I_r, && \,\, z \in \mathbb{C}^+ .  \label{a5}
\end{align}

\end{lemma}
\begin{proof} 
Using  the identity $V_n V_n^\top + W_n W_n^\top = I_p$ and the Woodbury formula,
\begin{equation} \label{a8q}
    \begin{aligned}\Big(\frac{1}{n} Z_n W_n W_n^\top Z_n^\top -  z I_n \Big)^{-1} & = 
     \Big( \frac{1}{n} Z_n (I_p - V_n V_n^\top) Z_n^\top - z I_n\Big)^{-1} \\
      & =  G_n(z) + \frac{1}{n} G_n(z) Z_n V_n \Big( I_r - \frac{1}{n} V_n^\top Z_n^\top G_n(z) Z_n V_n \Big)^{-1} V_n^\top Z_n^\top G_n(z) ,  
\end{aligned}
\end{equation}
where $G_n(z) \coloneqq (n^{-1} Z_n Z_n^\top - z I_n)^{-1}$ is the resolvent of the {\it companion matrix} of $S_n$. Thus, using Lemma~\ref{lem:zxcv} and denoting $A_n(z) \coloneqq n^{-1} V_n^\top Z_n^\top G_n(z) Z_n V_n$, we have
\begin{equation}
\begin{aligned} \label{a8}
    &~ V_n^\top S_n W_n (zI_{p-r} - W_n^\top S_n W_n)^{-1} W_n^\top S_n V_n \\ = &~ -\frac{1}{n} V_n^\top Z_n^\top \Big(I_n + z \Big( \frac{1}{n} Z_n W_n W_n^\top Z_n^\top -z I_n\Big)^{-1} \Big) Z_n V_n  \\
     = &~  -z A_n(z) ( I_r+ (I_r - A_n(z))^{-1} A_n(z)) - V_n^\top S_n V_n \\
     = &~ - z A_n(z)(I_r - A_n(z))^{-1} - V_n^\top S_n V_n .
\end{aligned}
\end{equation}
A second application of Lemma \ref{lem:zxcv} yields $A_n(z) = I_r + z V_n^\top (S_n - z I_p)^{-1} V_n$,  which converges to a multiple of identity by the isotropic local law of \cite{Bloe}: $A_n(z) \xrightarrow{a.s.} (1 + z m_\gamma(z))I_r$. Thus,
\begin{align} \label{a10}
    - z A_n(z)(I_r - A_n(z))^{-1} \xrightarrow{a.s.}  \Big( \frac{1}{m_\gamma(z)} + z \Big) I_r . 
\end{align}

Let $\alpha_n \in \mathbb{R}^p$ denote a deterministic sequence of unit vectors and $F_\gamma$ the Marchenko--Pastur law with parameter $\gamma$. 
Defining the empirical measure 
\[ \mu_n \coloneqq  \sum_{i=1}^p \langle \alpha_n, v_i(Z_n) \rangle^2 \delta_{\lambda_i(S_n)} ,
\] the isotropic local law states the Stieltjes transform of $\mu_n$ converges almost surely to $m_\gamma(z)$:
\begin{align}
     \int \frac{1}{\lambda - z} d\mu_n(\lambda) = \alpha_n^\top (S_n - zI_p)^{-1} \alpha_n \xrightarrow{a.s.} m_\gamma(z) ,
\end{align}
implying almost-sure weak convergence of $\mu_n$ to $F_\gamma$. Consequently,
\begin{align} \label{a17s}
     \alpha_n^\top S_n \alpha_n = \int \lambda \, d \mu_n(\lambda) \xrightarrow{a.s.} \int \lambda  \, dF_\gamma(\lambda) = 1 .
\end{align}
Given a deterministic sequence of unit vectors $\beta_n$ orthogonal to $\alpha_n$, 
a similar argument yields $\alpha_n^\top S_n \beta_n \xrightarrow{a.s.}0$, implying $V_n^\top S_n V_n \xrightarrow[]{a.s.} I_r$. Thus,  (\ref{a5})  follows from (\ref{a8}), (\ref{a10}), and the identity 
\[ \frac{1}{m_\gamma(z)} + z = -z \gamma m_\gamma(z) + 1 - \gamma . \]

\end{proof}

\begin{proof}[Proof (\ref{g28v})] 
We assume without loss of generality that $\|f\|_\mu = 1$ and (transposing $Y_n$ if necessary) $\gamma \in (0,1]$.
Denoting $\Delta_n \coloneqq Y_n - A_n$, we may write
\[
 Y_n = \tau(f,\mu) X_n + \frac{1}{\sqrt{n}} f(Z_n) + \Delta_n .
\]
Let $\tau(f,\mu) X_n = U_{n} \Lambda V_{n}^\top$, where $\Lambda \coloneqq \tau(f,\mu) \cdot \text{diag}( \sigma_1(X_n), \ldots \sigma_r(X_n))$ and $U_n$ and $V_n$ are semi-orthogonal matrices of size $n \times r$ and $p \times r$, respectively. Additionally, let $W_n$ denote a semi-orthogonal matrix of size $p \times (p-r)$  with columns spanning the orthogonal complement of $V_n$.

As in Section 3 of \cite{MJMY201}, introducing the matrix
\begin{align*}
    K_n(z) & \coloneqq  V_n^\top Y_n^\top Y_n W_n (z I_{p-r} -   W_n^\top Y_n^\top Y_n W_n)^{-1} W_n^\top Y_n^\top Y_n V_n ,\end{align*} 
    we have  $\big(V_n^\top Y_n^\top Y_n V_n + K_n(\sigma_i^2(Y_n)) \big) V_n^\top v_i(Y_n) = \sigma_i^2(Y_n) V_n^\top v_i(Y_n)$
    (note that $\sigma_i^2(Y_n)I_{p-r} - W_n^\top Y_n^\top Y_n W_n$ is invertible almost surely eventually). 
    Furthermore,  \begin{align} \label{b6s}
    v_i(Y_n)^\top V_n \big(I_r + \partial_z K_n(\sigma_i^2(Y_n)) \big) V_n^\top v_i(Y_n) = 1 ,
\end{align}
where $\partial_z K_n(z)$ is the elementwise derivative of $K_n(z)$: 
    \begin{align*}
     \partial_z K_n(z) & =  V_n^\top Y_n^\top Y_n W_n (z I_{p-r} -   W_n^\top Y_n^\top Y_n W_n)^{-2} W_n^\top Y_n^\top Y_n V_n .
\end{align*}

We shall first prove that for subcritical singular values, $\tau(f,\mu) \sigma_i(X_n) \leq \gamma^{1/4}$,  
\begin{align} \label{z6c} \sigma_r\big( \partial_z K_n(\sigma_i^2(Y_n))\big) \xrightarrow{a.s.} \infty, \end{align}
implying $\|V_n^\top v_i(Y_n)\|_2 \xrightarrow[]{a.s.} 0$ by (\ref{b6s}). 
Define the related matrix
\begin{align*}
    \overline K_n(z) &\coloneqq   \alpha_n^\top \Big(z I_{p-r} -   \frac{1}{n} W_n^\top f(Z_n)^\top f(Z_n) W_n\Big)^{-1} \alpha_n , \\
    \alpha_n &\coloneqq \frac{1}{\sqrt{n}} W_n^\top f(Z_n)^\top  \Big(U_n \Lambda + \frac{1}{\sqrt{n}} f(Z_n) V_n\Big) ,
\end{align*}
and let $G_n(z) \coloneqq (zI_{p-r} - W_n^\top Y_n^\top Y_n W_n)^{-1}$.
Since $Y_n W_n = (n^{-1/2} f(Z_n) + \Delta_n)W_n$, we have
\begin{equation*}
\begin{aligned}
    \big\|K_n(z) - \overline K_n(z)\big\|_2 & \leq   \Big\|\alpha_n^\top \Big( G_n(z) - \Big( zI_{p-r}- \frac{1}{n} W_n^\top f(Z_n)^\top f(Z_n) W_n\Big)^{-1} \Big) \alpha_n \Big\|_2\\
    &  + 2 \| \alpha_n^\top G_n(z) W_n^\top \Delta_n^\top Y_n V_n\|_2 +   \|V_n^\top Y_n^\top \Delta_n W_n G_n(z) W_n^\top \Delta_n^\top Y_n V_n\|_2 \\
    &  + \frac{2}{\sqrt{n}} \|\alpha_n^\top G_n(z) W_n^\top f(Z_n)^\top \Delta_n V_n\|_2 + \frac{1}{n} \| V_n^\top \Delta_n^\top f(Z_n) W_n G_n(z) W_n^\top f(Z_n)^\top \Delta_n V_n  \|_2 \\
    & + \frac{2}{\sqrt{n}} \|W_n^\top \Delta_n^\top Y_n V_n G_n(z) W_n^\top f(Z_n)^\top \Delta_n V_n\|_2.
\end{aligned}    
\end{equation*}
By Lemma \ref{lem:max_eig0},  $\|\alpha_n\|_2$ and $n^{-1/2} \|f(Z_n)\|_2$ are bounded almost surely eventually. For $z \in \mathbb{C}^+$, using the identity $(zI - A)^{-1} - (zI - B)^{-1} = (zI - A)^{-1} (A - B)(z I - B)^{-1}$ and $\|(zI-A)^{-1}\|_2 \leq \Im(z)^{-1}$,
\begin{equation}
\begin{aligned} \label{g4y}
  \Big\|G_n(z) - \Big( zI_{p-r}- \frac{1}{n} W_n^\top f(Z_n)^\top f(Z_n) W_n\Big)^{-1} \Big\|_2 \leq &  \frac{1}{\Im(z)^2} \Big\| W_n^\top \Big(Y_n^\top Y_n - \frac{1}{n} f(Z_n)^\top f(Z_n)\Big) W_n \Big\|_2 \\ 
  \leq & \frac{\|\Delta_n\|_2}{\Im(z)^2} \Big (\frac{2}{\sqrt{n}}\|f(Z_n)\|_2 + \|\Delta_n\|_2 \Big) .  
\end{aligned}    
\end{equation}
As $\|\Delta_n\|_2 \xrightarrow{a.s.}0$ by  Theorem \ref{thrm1}, we obtain  
\begin{align}
    &&  \hspace{2cm} K_n(z) - \overline K_n(z)  \xrightarrow{a.s.} 0 , && \hspace{2cm} z \in \mathbb{C}^+ \label{z7c}
\end{align}
(as $K_n$ is of size $r \times r$, operator norm and elementwise convergence are equivalent). 

Lemma \ref{lema5} and (\ref{a17s}) imply that $\overline K_n(z)$ converges to a diagonal matrix: 
\begin{align} & \hspace{3.9cm}
  \overline K_n(z) \xrightarrow{a.s.} (-z  \gamma m_\gamma(z) + 1 - \gamma) (I_r + \Lambda^2) ,  &  \hspace{1.0cm} \,\,\,\,\,\,\,\,\,\,\, z\in \mathbb{C}^+. \label{z8c}
\end{align}
Moreover, by the Arzela--Ascoli theorem, the convergence in (\ref{z7c}) and (\ref{z8c}) is uniform on compact subsets of $\mathbb{C}^+$. 
Since uniform convergence of an analytic sequence implies uniform convergence of the derivative,
\begin{align} \label{pxg}
&  \hspace{3.6cm} \partial_z K_n(z) \xrightarrow{a.s.}   -\big( \gamma m_\gamma (z) +  z \gamma \partial_z  m_\gamma (z) \big) ( I_r + \Lambda^2) , & \hspace{.78cm} \,\, z \in \mathbb{C}^+,
\end{align}
the convergence occurring uniformly on compact subsets of $\mathbb{C}^+$.
In particular, since $\sigma_i^2(Y_n) \xrightarrow{a.s.} \lambda_+ \coloneqq (1+\sqrt{\gamma})^2$ by (\ref{g27v}), we have for $\eta > 0$ that
\begin{align}
    \partial_z  K_n(\sigma_i^2(Y_n) + i \eta) \xrightarrow{a.s.} -\big(\gamma m_\gamma (\lambda_+ + i \eta) + (\lambda_+ + i \eta) \gamma \partial_z m_\gamma(\lambda_+ + i \eta) \big) (I_r + \Lambda^2) .  \label{z9c}
\end{align}
As $\eta > 0$ is arbitrary, (\ref{z6c}) follows from (\ref{z9c}), the bound
\[
 \sigma_r\big(\partial_z K_n(\sigma_i^2(Y_n))\big) \geq 
 \sigma_r\big(\partial_z K_n(\sigma_i^2(Y_n)+ i \eta)\big) ,  
\]
and the fact that $\big| \gamma m_\gamma (\lambda_+ + i \eta) + (\lambda_+ + i \eta) \gamma \partial_z m_\gamma(\lambda_+ + i \eta) \big| \rightarrow \infty$ as $\eta \rightarrow 0$. 

For supercritical singular values, $\tau(f,\mu) X_n > \gamma^{1/4}$, let $\lambda_i \coloneqq \lambda(\tau(f, \mu) \sigma_i(X_n), \gamma)$. Since $\lambda_i > \lambda_+$ by (\ref{g27v}), arguments similar to (\ref{g4y})--(\ref{z9c}) yield 
\begin{align}
    \partial_z  K_n(\sigma_i^2(Y_n)) \xrightarrow{a.s.} - \big(\gamma m_\gamma (\lambda_i) + \lambda_i \gamma \partial_z m_\gamma(\lambda_i) \big) (I_r + \Lambda^2) .  
\end{align}
As the elements of $\Lambda$ are distinct and in decreasing order,  the Davis-Kahan theorem (Theorem 2 of \cite{YWS14}) implies $V^\top v_i(Y_n)$ (which satisfies  $K_n(\sigma_i^2(Y_n)) V_n^\top v_i(Y_n) = \sigma_i^2(Y_n) V_n^\top v_i(Y_n)$) converges to the $i$-th standard basis vector (the length-$r$ vector with one as the $i$-th coordinate and zeros elsewhere). Using (\ref{b6s}) and the identity 
\[
\big( 1 - \big(\gamma m_\gamma (\lambda_i) + \lambda_i \gamma \partial_z m_\gamma(\lambda_i) \big) ( 1 +  \tau(f,\mu)^2 \sigma_i^2(X_n)) \big)^{-1} = c_2^2(\lambda_i, \gamma) , 
\]
we obtain $\langle v_i(X_n), v_j(Y_n) \rangle^2 \xrightarrow[]{a.s.} \delta_{ij} \cdot c_2^2(\lambda_i, \gamma)$.  The proof of the corresponding result for left singular vectors is similar and omitted. 

\end{proof}

To avoid calculations involving the logistic distribution and its associated orthogonal polynomials, notice that (\ref{plmokn2}) has an equivalent representation: taking  $z_{ij} \stackrel{i.i.d.}{\sim} \mathcal{N}(0,1)$,
\begin{align} \label{g4ys}
    {\bf 1}\big(\tilde x_{ij} + z_{ij} \leq 0 \big) \sim \mathrm{Ber}(\mathrm{logistic}(x_{ij})), 
\end{align}
where $\tilde x_{ij} \coloneqq -\Phi^{-1}(\mathrm{logistic}(x_{ij}))$ and $\Phi$ denotes the standard Gaussian cumulative distribution function. 

\begin{proof}[Proof of Corollary \ref{thrm_bin}]
In view of (\ref{g4ys}), we decompose (\ref{f4q}) as $Y_n \stackrel{d}{=} \sum_{k=1}^m Y_{n}^{(k)}$, where 
\begin{align*} 
 Y_{n}^{(k)}  &\coloneqq  \frac{1}{\sqrt{n}} f(\sqrt{n}\widetilde X_n + Z_{n}^{(k)}),   \\
  f(z) &\coloneqq {\bf 1}(z \leq 0) - \frac{1}{2} ,  \\
 \widetilde X_n &\coloneqq - \frac{1}{\sqrt{n}}\Phi^{-1}\Big(\mathrm{logistic}\Big(\sqrt{\frac{n}{m}}X_n\Big)\Big),  
\end{align*}
and $Z_{n}^{(k)}$ are independent matrices with i.i.d.\ Gaussian elements. As the  elements of $\sqrt{n/m} X_n$ uniformly converge to zero, we have
\[
\widetilde X_n= (1+o(1)) \sqrt{\frac{\pi}{8m}} X_n ;
\]
we therefore define $A_n \coloneqq \sum_{k=1}^m A_n^{(k)}$ where 
\[A_{n}^{(k)} \coloneqq a_1 \sqrt{\frac{\pi}{8m}} X_n + \frac{1}{\sqrt{n}} f(Z_{n}^{(k)}) =  \frac{1}{4\sqrt{m}} X_n + \frac{1}{\sqrt{n}} f(Z_{n}^{(k)}) .\]

 Given Corollary \ref{cor1}, it suffices to prove $ m^{-1/2}  \|Y_n - A_n \|_2 \xrightarrow{a.s.} 0$. For fixed $m$, this is an immediate consequence of
Theorem \ref{thrm1}, provided $\sqrt{n/m}X_n$ satisfies assumption (i). To accommodate $m \rightarrow \infty$ and the weaker condition Corollary \ref{thrm_bin} imposes on $X_n$, we
directly calculate moments of the elements of $Y_n - A_n$: 
\begin{equation}
\begin{aligned}
     \E \big(y_{ij}^{(k)} - a_{ij}^{(k)}\big) &=  -\frac{x_{ij}}{4\sqrt{m}} + \frac{1}{\sqrt{n}} \int_{-\sqrt{n} \tilde x_{ij}}^0 \phi(z) dz = -\frac{x_{ij}}{4\sqrt{m}} + \frac{1}{\sqrt{n}} \Phi(-\sqrt{n} \tilde x_{ij}) - \frac{1}{2\sqrt{n}} \\
     & =  \frac{n x_{ij}^3}{48 m^{3/2}} + O\Big(\frac{n^2 x_{ij}^5}{m^{5/2}}\Big),\\
     \E \big(y_{ij}^{(k)} - a_{ij}^{(k)}\big)^4 &=  \int_{\mathbb{R}\backslash[-\sqrt{n}\tilde x_{ij},0]} \Big(\frac{x_{ij}}{4\sqrt{m}}\big)^4 \phi(z) dz + \int_{-\sqrt{n}\tilde x_{ij}}^0 \Big(\frac{x_{ij}}{4\sqrt{m}} + \frac{1}{\sqrt{n}}\Big)^4 \phi(z) dz \\ & = \frac{x_{ij}}{4 n^{3/2} \sqrt{m}} + O\Big( \frac{x_{ij}^2}{m n}\Big) . \label{a30hut}
\end{aligned}    
\end{equation}
Now, using (\ref{a30hut}) and the uniform convergence to zero of the elements of $\sqrt{n/m}X_n$, 
\begin{equation}
    \begin{aligned}
       \sqrt{m} \big\| \E \big( Y_n^{(k)} - A_n^{(k)}\big) \big\|_2  & \leq   \sqrt{m} \big\| \E \big( Y_n^{(k)} - A_n^{(k)}\big) \big\|_F  \leq \sqrt{m} \bigg( \sum_{i=1}^n \sum_{j=1}^p \big [ \E \big( y_{ij}^{(k)} - a_{ij}^{(k)} \big) \big]^2 \bigg)^{1/2}  \\
       & \leq \|X_n\|_F \cdot \sup_{\substack{1 \leq i \leq n, 1 \leq j \leq p, \\ x_{ij} \neq 0}} \Big|\frac{n x_{ij}^2}{48 m^{2}}\Big| + o(1)  = o(1) .
    \end{aligned}
\end{equation}
As this holds uniformly in $k \in \{1, \ldots, m\}$, the operator norm of the $m^{-1/2} \E(Y_n-A_n)$ is negligible: 
\begin{align}
         \frac{1}{\sqrt{m}} \| \E(Y_n - A_n) \|_2 & \leq \frac{1}{\sqrt{m}} \sum_{k=1}^m \big\| \E \big( Y_n^{(k)} - A_n^{(k)}\big) \big\|_2 = o(1) .
\end{align}
Furthermore, since $Y_n^{(1)} - A_n^{(1)}, \ldots, Y_n^{(m)} - A_n^{(m)}$ are independent, 
\begin{equation}
\begin{aligned}
    \E |y_{ij} - a_{ij}|^4 & =   \sum_{k, k', \ell, \ell' = 1}^m \E \Big[ \big(y_{ij}^{(k)} - a_{ij}^{(k)}\big) \big(y_{ij}^{(k')} - a_{ij}^{(k')}\big)\big(y_{ij}^{(\ell)} - a_{ij}^{(\ell)}\big) (y_{ij}^{(\ell')} - a_{ij}^{(\ell')}\big) \Big] \\
    &= \frac{\sqrt{m}x_{ij}}{4n^{3/2}}  +\frac{3(m-1) x_{ij}^2}{16n} + O\Big(\frac{x_{ij}^2}{n}\Big)  .\label{a35} 
\end{aligned}
  \end{equation}
To obtain the last equality, we used that the sum is dominated by terms in which the four indices are equal (there are $m$ such terms) or paired (there are $3m(m-1)$ such terms).

Thus, denoting $W_n \coloneqq \sqrt{n/m}(Y_n-A_n - \E (Y_n - A_n))$, the elements of $W_n$ are mean zero and have  
fourth moments uniformly converging to zero: 
\begin{align}
\lim_{n \rightarrow \infty}\sup_{1 \leq i \leq n, 1 \leq j \leq p} \E |w_{ij}|^4 = 0 .
\end{align}
Applying Lemma  \ref{lem:max_eig} to $W_n$ therefore completes the proof: 
\begin{align}
    \frac{1}{\sqrt{m}} \|Y_n - A_n\|_2 \leq \frac{1}{\sqrt{n}} \|W_n\|_2  + \frac{1}{\sqrt{m}}\|\E(Y_n-A_n)\|_2  \xrightarrow[]{a.s.} 0 .
\end{align}
\end{proof}

\begin{proof}[Proof of Corollary \ref{cor3}] The first portion of the corollary is immediate; we therefore assume $\mu$ has finite moments, differentiable density $\omega$, $\text{supp}(\mu) = \mathbb{R}$, and $\omega'/\omega$ is polynomially bounded. 
Using integration by parts,
\begin{align}
    a_k = \int_{-\infty}^\infty  f^*(z) q_k(z) \omega(z) dz =-\int_{-\infty}^\infty  q_k(z) \omega'(z) dz = \int_{-\infty}^\infty q_k'(z) \omega(z) dz  = b_k.
\end{align}
Thus, by (\ref{assumption4.0}),
\begin{align}
    \tau(f^*,\mu) = \frac{1}{ \|f^*\|_\mu} \sum_{k=1}^\infty a_k^2 = \|f^*\|_\mu = \bigg(\int_{-\infty}^\infty \frac{(\omega'(z))^2}{\omega(z)} dz \bigg)^{1/2}.
    \end{align}
We refer to Proposition 4.2 of \cite{Perry} for proof that $\mathcal{I}(\omega) \geq 1$ and that the inequality is strict for all non-Gaussian distributions. 
\end{proof}

\begin{proof}[Proof of Corollary \ref{cor:trunc}]
    For $x\in[0,2c]$, we have
\begin{equation}
\begin{aligned} \label{truncato1}
        \int (f_c(x+z) - f_c(z)) d\mu(z) & =  \int_{-c-x}^{-c} (x+z) \omega(z) dz  + \int_{-c}^{c-x} x \omega(z) dz -\int_{c-x}^c z \omega(z) dz\\
        &= x (F_\mu(c-x)- F_\mu(-c-x)) + \int_{-c-x}^{-c} z \omega(z) dz - \int_{c-x}^{c} z \omega(z) dz ,
\end{aligned}
\end{equation}
and for $x < [-2c,0)$, 
\begin{equation}
\begin{aligned} \label{truncato2}
   \hspace{.42cm} \int (f_c(x+z) - f_c(z)) d\mu(z) & =  -\int_{-c}^{-c-x} z \omega(z) dz  + \int_{-c-x}^{c} x \omega(z) dz +\int_{c}^{c-x} (x+z) \omega(z) dz\\
        &= x (F_\mu(c-x)- F_\mu(-c-x)) - \int_{-c}^{-c-x} z \omega(z) dz + \int_{c}^{c-x} z \omega(z) dz . 
\end{aligned}
\end{equation}
As the elements of $\sqrt{n} X_n$ uniformly converge to zero and $\omega$ and $F_\mu$ are continuous at $\pm c$,  
\begin{align} \label{a38}
    \int (f_c(\sqrt{n}x_{ij} +z) - f_c(z)) d\mu(z) = \sqrt{n} x_{ij} [ F_\mu(c)-F_\mu(-c) -c (\omega(c) + \omega(-c)) ] + o(\sqrt{n}x_{ij}) ,
\end{align}
implying $\|\E (Y_n - A_n)\|_2 \rightarrow 0$ analogously to (\ref{as9jd}) and (\ref{qerg}). Moreover, calculations similar to (\ref{truncato1}) and (\ref{truncato2}) yield that the second and fourth moments of $\sqrt{n}(Y_n - A_n)$ are uniformly bounded:
\begin{align*}
& \lim_{n \rightarrow \infty} \sup_{1 \leq i \leq n, 1 \leq j \leq p} n \E(y_{ij} - a_{ij})^2 = 0 , &     \limsup_{n \rightarrow \infty} \sup_{1 \leq i \leq n, 1 \leq j \leq p} n^2 \E(y_{ij} - a_{ij})^4 < \infty .
\end{align*}
Thus, as in the proof of Lemma \ref{lem1}, we conclude using Lemma \ref{lem:max_eig} that $   \|Y_n - A_n\|_2 \xrightarrow[]{a.s.} 0$.

\end{proof}

\section*{Acknowledgements}

The author is grateful to Apratim Dey, David Donoho, Elad Romanov, and Tselil Schramm for discussions and comments.

\end{document}